\documentclass{article}
\usepackage[title]{appendix}
\usepackage{hyperref}
\usepackage[T1]{fontenc}
\usepackage[utf8]{inputenc}
\usepackage[english]{babel}
\usepackage{a4wide}
\usepackage{amsmath,amssymb}
\usepackage{amsthm}
\usepackage[autolanguage,np]{numprint}
\usepackage{graphicx}
\usepackage{subfig,float}
\usepackage{mathtools}
\usepackage{enumitem}
\usepackage{csquotes}
\usepackage[compact,small]{titlesec}
\usepackage{graphicx}
\usepackage{tikz}
\usetikzlibrary{trees,arrows}
\usepackage{float}
\usepackage{cleveref} 

\newcommand{\X}{\mathbf{X}}
\newcommand{\G}{\mathbf{G}}
\newcommand{\g}{\mathbf{g}}

\newcommand{\cM}{\mathcal{M}}

\newcommand{\cG}{\mathcal{G}}

\newcommand{\R}{\mathbb{R}}
\newcommand{\x}{\mathbf{x}}
\newcommand{\p}{\mathbf{p}}
\newcommand{\HH}{\overline{H}}
\newcommand{\y}{\mathbf{y}}
\newcommand{\0}{{\bf{0}}}
\newcommand{\e}{\textrm{e}}

\def\rp{\right)}
\def\lp{\left(}

\titlelabel{\thetitle. }

\newtheorem{lemma}{Lemma}
\newtheorem{proposition}{Proposition}
\newtheorem{theorem}{Theorem}

\newtheorem{remark}{Remark}

\theoremstyle{definition}
\newtheorem{example}{Example}

\renewcommand{\P}{\mathbb{P}}
\newcommand{\EE}{\mathbb{E}}

\newcommand{\indicator}{\mathbf{1}}


\title{Diffusion spiders: Green kernel, excessive functions \\ and optimal stopping}
\author{Jukka Lempa
\\{\small University of Turku}
\\{\small Department of Mathematics and Statistics}
\\{\small FI-20014 Turun Yliopisto, Finland} \\{\small email: jukka.lempa@utu.fi}
\and
Ernesto Mordecki \\{\small University of the Republic}
\\{\small Faculty of Science} \\ {\small Center of Mathematics}
\\{\small 11400 Montevideo, Uruguay} \\{\small email: mordecki@cmat.edu.uy}
\and
Paavo Salminen\\{\small Abo Akademi University}
\\{\small Faculty of Science and Engineering}
\\{\small FI-20500 Abo, Finland} \\{\small email: phsalmin@abo.fi}
}
\date{}

\begin{document}

\maketitle

\begin{abstract}
\noindent
A diffusion spider is a strong Markov process with continuous paths taking values on a graph with one vertex and a finite number of edges (of infinite length).  An example is Walsh's Brownian spider where the process on each edge behaves as Brownian motion. We calculate  the density of the resolvent kernel in terms of the characteristics of the underlying diffusion. Excessive functions are studied via the Martin boundary theory. The main result is an explicit expression for the representing measure of a given excessive function. These results are used to solve  optimal stopping problems for diffusion spiders.



\end{abstract}

\bigskip

\noindent\textbf{Keywords:} hitting time, excursion entrance law, Riesz representation, harmonic function, skew Brownian motion, stopping region.

\medskip
\noindent\textbf{AMS classification: 60J60, 60J35, 60J65, 60G40} 




\newpage
\section{Introduction}
\label{II}
Diffusion on graphs has been a topic of intense research in the past years.
One of the earliest papers in this direction is by Walsh \cite{Walsh}, where he introduces a generalization of the skew Brownian motion, later called the Walsh Brownian motion.
This process lives on the half lines in $\R^2$ with only one common point, the origin of the plane. On the rays (or legs) the process behaves like a Brownian motion, and when this Brownian motion hits the origin, it chooses, roughly speaking,  a new ray randomly with an angle having a probability distribution defined on $[0,2\pi)$.
A special case is when the distribution consists of finitely many atoms. If $n$ is the number of atoms then the process is called a Brownian (or Walsh) spider with $n$ legs. The Brownian spider can be seen as a diffusion on a star shaped graph.

Salisbury \cite{Salisbury} gives a formal construction of the Skew Brownian motion (the particular case with two legs) based on the excursion theory of the right processes, see also an earlier paper by Rogers \cite{Rogers}. 
The Walsh Brownian motion was considered by Barlow, Pitman and Yor in \cite{BarlowPitmanYor1}. 
They construct this process using  the semi-group theory. 
Freidlin and Wentzell \cite{FreidlinWentzell} study diffusions on graphs through martingale problems, provide methods to verify the weak convergence of discrete processes  to these diffusions, and
exhibit several examples.
Friedlin and Sheu \cite{fs} prove the  It\^o formula for diffusions on  graphs and establish  large deviation principles.
Papanicolau et al. \cite{PapaPapaLep} compute exit probabilities and related quantities for a Brownian spider with equal leg probabilities. This is  extended in Fitzsimmons and Kuter \cite{fk} for Walsh Brownian motion (on more general graphs).
%
%
Kostrykin el al. \cite{Kostrykin} constructs Brownian motion on metric graphs identifying the generator of the process.
%
Fitzsimmons and Kuter study the harmonic functions of the Walsh Brownian motion in \cite{FitzsimmonsKuter} and on metric graphs in  \cite{fk}.

For a more recent work see the paper by Karatzas and Yan \cite{KaratzasYan} where they generalize the Walsh Brownian motion to a semimartingale framework,
provide an It\^o type formula and generalize other classical tools of stochastic calculus (as the removal of drift by a change of the measure). They furthermore characterize the solution of an optimal stopping problem in this framework, and discuss also 
stochastic control problems with discretionary stopping. 
An optimal stopping problem on a Brownian spider, related to maximal inequalities, has been considered by Ernst \cite{Ernst}.
We refer also to Bayraktar and Zhang \cite{BayZhang} for an embedding problem for Walsh Brownian motions. For local times and occupation times for random walks and Brownian spiders, see  Cs\'aki et al. \cite{CCFP1} and \cite{CCFP2}.  
In Dassios and Zhang \cite{DassiosZhang} further results on hitting times for Brownian spiders can be found, 
with a potential application in mathematical finance, more precisely related to a liquidity risk problem in a system of banks.

In the present paper, following \cite{BarlowPitmanYor1}, we consider the case where instead of a Brownian motion on the legs a regular recurrent diffusion is used.
We call such a process a \emph{diffusion spider}. 

The contribution of the paper is twofold: 
Firstly, we derive (hereby our basic reference is \cite{Rogers}) a formula for the Green function (resolvent density)  of this process. This is used, in particular,  to characterize the excessive functions of the diffusion spider. For results on  excessive functions for Walsh Brownian motion, see also \cite{BayZhang} and \cite{KaratzasYan}.  Secondly, we solve some optimal stopping problems. 
For the solution of these optimal stopping problems we use two closely related but different methods,  
inscribed in the so called \emph{representation approach}:
(i) when the stopping region is expected to be connected, the candidate for the representing measure of the value function is used in order to find the solution,  
(ii) when the stopping region is expected to be  disconnected, a verification theorem is adapted from \cite{CCMS}, based on the notion of the extension of the negative set of the problem (see also \cite{CrocceMordecki}). 
Both methods above require the computation of the Martin kernel, or equivalently the Green kernel. 
It is very well established that the resolvent kernel (i.e. the integration against the Green function)
is the inverse operator to the infinitesimal generator of the process. 
This second operator is described locally inside the edges as a generalized differential operator obtained from the infinitesimal generator of the underlying diffusion. To complete the description of the infinitesimal generator (and the process) we should determine its domain. In our approach to optimal stopping we use the relationship between the resolvent operator and the infinitesimal generator as a tool but circumvent the consideration of the domains of the operators. 

The rest of the paper is organized as follows. 
In Section \ref{S} we introduce the diffusion spider and discuss its existence as a strong Markov process.
After this, in Section \ref{L1} some preliminaries on linear diffusions are presented.
In the following Section \ref{GR} the first main results of the paper are given, that is, an expression for the density of the Green kernel of the spider.
We proceed then in Section \ref{HT} by computing the Laplace transform of a hitting time associated with a diffusion spider.
Section \ref{EX} contains central main results of the paper concerning  the representing measure of excessive functions. Other characterizations of excessive functions are also discussed.
In Section \ref{O} we proceed with general results on optimal stopping, applied to diffusion spiders, adapted from \cite{CCMS} and \cite{CrocceMordecki}.
In Section \ref{OSP} we solve three examples of optimal stopping for a Brownian spider. 
The first one has a connected stopping region and the solution is obtained by analyzing the representing measure formulas for the candidate value function. 
The other two examples have disconnected stopping regions and are solved once the existence and the uniqueness of the solution of the associated equation system is established using the results of Section  \ref{O}. 
We conclude with two appendices; in particular, the second  is about a smooth fit property for diffusion spiders.

\bigskip
\section{Definition and existence of a diffusion spider}
\label{S}
\bigskip
Let $X=\{X_t\colon t\geq 0\}$ be a recurrent  linear (Feller) diffusion  taking values in $[0,\infty)$. It is assumed that $0$ is a regular boundary point with reflection, and  $+\infty$ is  natural (for the boundary classification of a linear diffusion, see, e.g.,  \cite{BorodinSalminen}).  The notations $\P_x$ and $\EE_x$ are for the probability measure and the expectation operator, respectively, associated with $X$ when initiated at $x.$

Consider a "star shaped" graph $\Gamma\subset \R^2$ with only one internal vertex taken to be the origin of $\R^2$, and $n$ edges $\ell_1,\dots,\ell_n$, also called legs, meeting each other the origin $\0$. Hence, the legs are of infinite length. For $x\geq 0$ and $i\in\{1,2,\dots,n\}$ we let  $\x=(x,i)$ denote the point on $\Gamma$ located on the leg $\ell_i$ at the distance $x$ from the origin. For any $i$ the point $(0,i)$ is identified as the origin. The topology on $\Gamma$ is the relative Euclidean topology, and, hence,  $\Gamma$ is a locally compact Hausdorff space  with a countable base.

On the graph $\Gamma$ we consider a stochastic process  for which we use the notation 
$$
\X:=\{\X_t:=(X_t,\rho_t)\colon t\geq 0\},
$$ 
where $\rho_t\in\{1,2,\dots,n\}$ indicates
the leg on which $\X_t$ is located at time $t$ and $X_t$ is  the distance of $\X_t$ to the origin at time $t$ measured along the leg $\rho_t$. On each edge $i=1,\dots,n$ before hitting zero, the process $\X$ behaves like the diffusion $X$.
As a part of  the definition of the process 
there is a vector $\p=(p_1,\dots,p_n)$  such that $p_i>0\ (i=1,\dots,n)$ and $\sum_{i=1}^np_i=1$.
When  $\X$ hits $\0$, it  continues, roughly speaking, with probability $p_i$ 
on the leg $\ell_i$. 
In Figure \ref{figure:3} we have illustrated such a graph with 3 legs.  
\begin{figure}[H]
\centering
\begin{tikzpicture}[scale=0.7, transform shape]
\tikzstyle{every node} = [circle, fill=gray!0,draw]
\node (0) at (0,0) {$\0$};
\tikzstyle{every node} = [circle, fill=gray!0]
\node (1) at (0,3) {$\ell_1$};
\node (11) at (0,1.5) {$p_1$};
\node (2) at (-2.6,-1.5) {$\ell_2$};
\node (22) at (-1.3,-0.75) {$p_2$};
\node (3) at (2.6,-1.5) {$\ell_3$};
\node (33) at (1.3,-0.75) {$p_3$};
\foreach \from/\to in {11/1,22/2,33/3}
\draw [->,line width=0.4mm, fill=black] (\from) -- (\to);
\foreach \from/\to in {0/11,0/22,0/33}
\draw [-,line width=0.4mm, fill=black] (\from) -- (\to);
\end{tikzpicture}
\caption{A star graph with $3$-legs}\label{figure:3}
\end{figure}
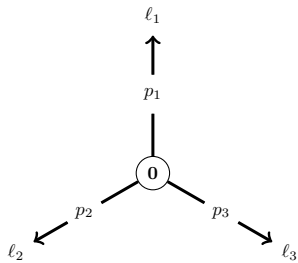

This process when  $X$ is a reflected Brownian motion was introduced by Walsh in \cite{Walsh}, and is, hence, in this case, called the Walsh Brownian motion with discrete (finitely many atoms) distribution for the angles. 
A formal construction of the Walsh Brownian motion  based on the semigroup theory was given by Barlow et al. in \cite{BarlowPitmanYor1}. We refer to \cite{BarlowPitmanYor1} also for earlier references. According to the remark on p. 281 in \cite{BarlowPitmanYor1}, a similar construction can be carried out in our case where the underlying process is $X$, i.e., a general recurrent Feller diffusion on $[0,\infty)$ reflected at zero. We call this process  a (homogeneous)  diffusion spider. It is a strong Markov process with a Feller semigroup and continuous sample paths (in the relative Euclidean topology on $\Gamma$). 

To indicate explicitly the number of legs and the probabilities for the excursions we write  $\X=(X,n,\p)$.  The notations $\P_{(x,i)}$ and $\EE_{(x,i)}$ are for the probability measure and the expectation operator, respectively, associated with $\X$ when initiated at $(x,i).$
The term homogeneous refers to the property  that on each leg the movement  is governed by the same diffusion $X$. The non-homogenous case is the one when the diffusion is not the same on each leg.  This situation and also more general graphs has been considered in   \cite{FreidlinWentzell}.
\bigskip
\section{Preliminaries for linear diffusions}
\label{L1}
\bigskip
Consider the  linear diffusion $X$ introduced above. Recall  that the generator  of $X$  is given in the form 
\begin{equation}\label{gen}
\cG u:=\frac d{dm}\frac d{dS}u,
\end{equation}
where $m$ and $S$ denote the  speed measure and the scale function of $X$, respectively, and $u$ is a function in the domain of the generator. Recall that 0 is a regular boundary point with reflection. For simplicity, it is  assumed that $m(\{0\})=0$. We also put $S(0)=0.$ 

In addition to the diffusion $X$, we need also the diffusion $X^{\partial}$ with the same speed and scale as $X$ but for which 0 is a killing boundary. For $r>0$ let  $\varphi_r\ (\varphi_r^{\partial})$ and $\psi_r\ (\psi_r^{\partial})$ be the decreasing and the increasing, respectively, fundamental solutions of the generalized differential equation
$$
\frac d{dm}\frac d{dS}f=rf
$$
associated with  $X\ (X^{\partial})$, see, e.g., \cite{BorodinSalminen}. These functions are non-negative, continuous and unique up to  multiplicative constants. For simplicity, it is moreover assumed that $\varphi_r$ and $\psi_r^\partial$ are differentiable with respect to $S$.  To shorten the notation,  the subindex $r$ is omitted and 
it is written, e.g., $\varphi$ instead of $\varphi_r$. 

Recall also that 
\begin{equation}\label{recall}
\psi^{\partial}(0)=0, \quad{d\psi\over d S}(0+)=0,\quad \varphi^\partial\equiv\varphi,
\end{equation}
and for $x\geq 0$ 
\begin{equation}
\label{H0}
\EE_x(e^{-rH_0})=\frac{\varphi(x)}{\varphi(0)},
\end{equation}
where $H_y:=\inf\{t\,:\, X_t=y\}$ with $y=0$. 
Moreover, we normalize 
\begin{equation}
\label{norm}
\varphi(0)=\varphi^\partial(0)=1,\quad \text{and}\quad {d\psi^\partial\over d S}(0+)=1,
\end{equation}
and, consequently, the Wronskian $w_r$ of $X^\partial$ is given by
\begin{equation}
\label{w0}
w_r:={d\psi^\partial\over d S}(x)\varphi(x)-{ d\varphi\over d S}(x)\psi^\partial(x)
=
{ d\psi^\partial\over d S}(0+)=1,
\end{equation}
where it is used that the Wronskian does not depend on $x$ and that 
\begin{equation}
\label{reg00}
{ d\varphi\over d S}(0+)>- \infty,
\end{equation}
since 0 is assumed to be a regular boundary point. Finally, because $+\infty$ is assumed to be natural,  it holds
\begin{equation}
\label{p0}
\lim_{x\to +\infty}\frac{d\varphi}{dS}(x)=0.
\end{equation}

\bigskip
\relax
\section{Green kernel}
\label{GR}
\bigskip
Consider now a diffusion spider  $\X=(X,n,\p)$ with $n$ legs and the corresponding probabilities $p_1,p_2,...,p_n$. Define  
\begin{align*}
&\mathbf{m}(dx,i):=p_im(dx),\quad i=1,\dots,n,\qquad \mathbf{m}(\{\0\}):=0, \\
&\mathbf{S}(dx,i):=\frac{1}{p_i}S(x).
\end{align*}
We call $\mathbf{m}$ and $\mathbf{S}$  the speed measure and the scale function, respectively, of $\X$. Clearly, on every leg $\ell_i,i=1,\dots,n$, of the spider it holds 
$$
\frac d{d\mathbf{m}}\frac d{d\mathbf{S}}=\frac d{dm}\frac d{dS}.
$$

The main contribution in this section is  an explicit expression of the Green kernel (also called the resolvent kernel) for $\X$ defined via 
\begin{equation}
\label{green11}
{\bf G}_rf(x,i):=\int_0^{\infty}e^{-rt}\EE_{(x,i)}\left(f(\X_t)\right)\,dt,
\end{equation}
where $r>0$ and $f\colon\Gamma\to\R$ is a bounded measurable function. 

\relax

\begin{theorem}
\label{Thrm:GreenKernel1}
The Green  kernel of the diffusion spider $\X$ has a density ${\bf g}_r$ with respect to  the speed measure {\bf m} 
which is 
given for $ x\geq 0$ and $y\geq 0$ by
\begin{equation}
\label{gk1}
{\bf g}_r((x,i),(y,j))=
\begin{cases}
\vspace{2mm}
\varphi(y)\tilde{\psi}(x,i)
,&\quad  x\leq y,\ i=j\\
\vspace{2mm}
\varphi(x)\tilde{\psi}(y,i)
,&\quad  y\leq x,\ i=j\\
{\displaystyle c_r^{-1} 
\,\varphi(y)
\varphi(x)},&\quad i\not=j,
\end{cases}
\end{equation}
where 
\begin{equation}
\label{tildepsi}
\tilde{\psi}(x,i):=\frac1{p_i}\psi^{\partial}(x)+\frac1{c_r}\varphi(x),
\end{equation}
and 
\begin{equation}
\label{cr1}
c_r:=-{d\over dS}\varphi(0+)>0.
\end{equation}
\end{theorem}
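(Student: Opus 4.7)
The plan is to apply the strong Markov property at the first hitting time $H_0$ of the origin. For $\bx=(x,i)$ and bounded measurable $f$, the decomposition
\begin{equation*}
\mathbf{G}_r f(\bx) = \EE_{\bx}\!\left[\int_0^{H_0} e^{-rt} f(\X_t)\,dt\right] + \EE_{\bx}\!\left(e^{-rH_0}\right)\mathbf{G}_r f(\0)
\end{equation*}
reduces the computation to two pieces. Before $H_0$ the spider is confined to leg $i$ and evolves as $X^\partial$, so the first term equals $G_r^\partial f_i(x)$ where $f_i(y):=f(y,i)$ and $G_r^\partial$ is the classical killed resolvent with density $g_r^\partial(x,y)=\psi^\partial(x\wedge y)\varphi(x\vee y)$ with respect to $m$ (using $w_r=1$ from (\ref{w0})). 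Formula (\ref{H0}) together with $\varphi(0)=1$ gives $\EE_{\bx}(e^{-rH_0})=\varphi(x)$.

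To pin down $\mathbf{G}_r f(\0)$, I would characterize $u:=\mathbf{G}_r f$ as the unique bounded function satisfying $(r-\cG)u=f$ in the generalized sense on each open leg, continuous at $\0$, and obeying the Walsh-type glueing condition
\begin{equation*}
\sum_{i=1}^n p_i\,\frac{du}{dS}(0+,i)=0,
\end{equation*}
which encodes the $(p_i)$-weighted excursion measure of $\X$ at the origin. Writing $u(x,i)=G_r^\partial f_i(x)+A_i\varphi(x)$ (the $\psi^\partial$-component is killed by boundedness at $+\infty$ via (\ref{p0})), continuity at $\0$ forces $A_i\equiv A$. Using (\ref{recall}) and differentiating under the integral, $\tfrac{d}{dS}G_r^\partial f_i(0+)=\int_0^\infty\varphi(y)f_i(y)\,m(dy)$; combined with $\tfrac{d\varphi}{dS}(0+)=-c_r$ from (\ref{cr1}) and the glueing condition, this yields
\begin{equation*}
A=\frac{1}{c_r}\sum_{k=1}^n p_k\int_0^\infty\varphi(y)f_k(y)\,m(dy).
\end{equation*}
Substituting back into $u(x,i)$ and regrouping against $\mathbf{m}(dy,j)=p_j m(dy)$ exhibits the three cases of (\ref{gk1}): on the diagonal leg the $A\varphi(x)$ contribution is absorbed into $\tilde\psi(x,i)$ via (\ref{tildepsi}), and off the diagonal one reads off $c_r^{-1}\varphi(x)\varphi(y)$.

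The main obstacle is a rigorous justification of the glueing condition. A clean way to avoid it is a \emph{verification} strategy: take (\ref{gk1}) as a candidate kernel and check directly (i) symmetry; (ii) $\cG\mathbf{g}_r=r\mathbf{g}_r$ in $\bx$ away from $\by$ and the origin, since $\varphi$ and $\psi^\partial$ are fundamental solutions; (iii) continuity of $\mathbf{g}_r$ at $\0$, since both branches of (\ref{gk1}) evaluate to $c_r^{-1}\varphi(y)$ at $x=0$; (iv) a Wronskian jump of $-1/p_i$ in $\tfrac{d}{dS}\mathbf{g}_r$ along the on-leg diagonal, which is the correct $-1$ relative to $\mathbf{m}(dy,i)=p_i m(dy)$; and (v) $\sum_i p_i\,\tfrac{d}{dS}\mathbf{g}_r(0+,(y,j))=0$ for every $(y,j)$ with $y>0$ (an easy telescoping cancellation using $\tfrac{d\varphi}{dS}(0+)=-c_r$). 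Uniqueness of the Feller resolvent then identifies the candidate as $\mathbf{g}_r$.
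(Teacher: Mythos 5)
Your proposal is correct in outline and agrees with the paper's proof for the first three steps: the strong Markov decomposition at $\overline{H}_{\0}$, the identification of the pre-$\overline{H}_{\0}$ part with the killed resolvent $G_r^\partial$ on the current leg, and $\EE_{(x,i)}(e^{-r\overline{H}_{\0}})=\varphi(x)$. Where you genuinely diverge is the determination of $\mathbf{G}_rf(\0)$. You obtain it from the generator-domain characterization, writing $u=G_r^\partial f_i+A\varphi$ and imposing continuity at $\0$ together with the glueing condition $\sum_i p_i\,\frac{du}{dS}(0+,i)=0$; your computation of $A$ is correct and reproduces the paper's value $\frac{1}{c_r}\sum_k p_k\int_0^\infty\varphi(y)f(y,k)\,m(dy)$. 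The paper instead invokes Rogers' excursion-theoretic formula $\mathbf{G}_rf(\0)={\boldsymbol{\nu}}_rf/(r{\boldsymbol{\nu}}_r1)$, identifies the spider's excursion entrance law as the $p_i$-weighted sum of the It\^o entrance laws of $X$ on the legs, computes $\nu_r(A)=\int_A\varphi\,dm$ and $r{\boldsymbol{\nu}}_r1=c_r$, and never touches the domain of the generator --- indeed the introduction states explicitly that the authors circumvent domain considerations, and the glueing condition appears later (Theorem \ref{glue}) as a \emph{consequence} of the Green-kernel formula, not as an input. You correctly identify the justification of the glueing condition as the main obstacle in your route; to close it you would have to import the domain characterization from an external source such as \cite{BarlowPitmanYor1}, at which point your argument becomes a clean alternative. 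Your fallback verification strategy has a similar issue: items (i)--(v) are all checkable (and your Wronskian jump $-1/p_i$ and the telescoping in (v) are right), but ``uniqueness of the Feller resolvent'' only identifies the candidate with $\mathbf{g}_r$ once one already knows which Feller process the kernel is the resolvent of, i.e.\ once the generator of $\X$ (including its behaviour at $\0$) has been pinned down --- which is again the glueing condition in disguise. In short: both routes work, the paper's buys independence from the generator's domain at the cost of excursion theory, yours buys a purely analytic derivation at the cost of assuming the domain characterization.
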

\relax
\begin{proof}
We apply the strong Markov property of $\X$ at  
the first hitting time 
$$
\overline {H}_\0:=\inf\{t\geq 0\colon \X(t)=\0\}
$$ 
of the origin to obtain the following equation for the Green kernel (see Lemma 1 in \cite{Rogers})  
\begin{equation}\label{res}
\G_rf(x,i)=\G^\partial_rf(x,i)+\EE_{(x,i)}(e^{-r\HH_\0})\G_rf(\0),\quad x\geq 0, i\in\{1,2,...,n\},
\end{equation}
where $\G^\partial_r$ is the Green kernel  of $\X$  killed at $\0$ and $f$ is a real-valued, bounded and measurable function defined on $\Gamma$.  Since $\X$ behaves on the legs before hitting $\0$ as the diffusion $X$ before hitting 0 the Green kernel $\G^\partial_r$ coincides on a given leg with the Green kernel of $X$ killed at 0. Hence,  for all $i$ 
$$
\G^\partial_rf(x,i)=\int_0^\infty g_r^\partial(x,y)\, f(y,i)\, m(dy),
$$
where  
\begin{equation}
\label{gr0}
g_r^\partial(x,y)=
\begin{cases}
\vspace{2mm}
\psi^\partial(y)\varphi(x),&0\leq y\leq x,\\
\psi^\partial(x)\varphi(y),&0\leq x\leq y,
\end{cases}
\end{equation}
see \cite{BorodinSalminen} p. 19 and recall (\ref{w0}).
Consequently, $\G^\partial_r$ has a density $\g^\partial_r$ (with respect to $\bf m$) given by 
\begin{equation}
\label{gr1}
\g^\partial_r((x,i),(y,j))=
\begin{cases}
\vspace{2mm}
0,&\text{  $i\neq j$}\\
 g^\partial_r(x,y)/p_i,&\text{ $i=j$ } 
\end{cases}
\end{equation}
Consider now the second term on the right hand side of (\ref{res}). Clearly, for all $i$
\begin{equation}
\label{gr2}
\EE_{(x,i)}(e^{-r\HH_0})=\EE_{x}(e^{-r H_0})=\varphi(x),
\end{equation}
where  (\ref{H0}) is used. To find $\G_rf(\0)$ recall formula (2) in \cite{Rogers} saying that 
\begin{equation}\label{eq:r2}
\G_rf(\0)={{\boldsymbol{\nu}}_rf\over r{\boldsymbol{\nu}}_r1}.
\end{equation}
Here ${\boldsymbol{ \nu}}_r$ 
refers for any $r>0$ to a  non-negative $\sigma$-finite measure obtained as the 
Laplace transform of the excursion entrance law associated with $\X$ for excursions starting from $\0$. The excursions from $\0$ of the diffusion spider take place on the legs, 
and we let $(\xi,i)$ denote such an excursion on the $i$th leg with $\xi$ as an element in the space
\begin{align*}
&U:=\{\xi\colon[0,\infty)\to[0,\infty)\,:\, \xi \text{ continuous, }\,  \xi(0)=0, \\
&\hskip2cm \text{ and } \exists\, \zeta(\xi)>0 \text{ such that } \xi(0)>0\text{ for } 0<t<\zeta(\xi) \text{ and } \xi(t)=0\text{ for }t\geq\zeta(\xi)
\}.
\end{align*} 
Let $\nu$ denote the It\^o excursion measure of the linear diffusion $X$ for excursions starting from 0, see Revuz and Yor \cite{RevuzYor} Chapter XII,
that is 
$$
\nu^t(A):=\nu(\{\xi\in U\colon \xi(t)\in A,0<t<\zeta(\xi)\}),\  A\in \mathcal{B}([0,\infty)),
$$
and it holds (cf. Salminen, Vallois and Yor \cite{salminenvalloisyor07})
$$
\nu^t(A)=\int_Ah(t,y)\,{m}(dy),
$$
where 
$$
h(t,y):=\P_{y}(H_0\in dt)/dt.
$$
For the Laplace transform of $ \nu^t(A)$ we have
$$
\nu_r(A):=\int_0^\infty \e^{-rt}\nu^t(A)\,dt=\int_A{m}(dy)\int_0^\infty dt\, \e^{-rt}h(t,y)
=\int_A \varphi(y)\,{m}(dy).
$$
Let ${\displaystyle{\overline {A}:=\cup_{i=1}^n(i,A_i)}}$ with $A_i\in \mathcal{B}([0,\infty)),\, i=1,2,...,n,$ be a subset of $\Gamma$.  Then the excursion entrance law ${\boldsymbol{\nu}}^t$ of $\X$ has the representation
$$
{\boldsymbol{\nu}}^t(\overline {A})=\sum_{i=1}^n p_i\,\nu^t(A_i).
$$
Consequently,
$$
{\boldsymbol{\nu}}_r(\overline {A}):=\sum_{i=1}^n p_i\,\nu_r(A_i)=\sum_{i=1}^np_i\int_{A_i}\varphi(y)\,m(dy),
$$
and putting here $A_i=(0,\infty),\, i=1,2,...,n,$ yields
$$
\aligned
r{\boldsymbol{\nu}}_r 1&=r\sum_{i=1}^np_i\int_0^\infty\varphi(y)m(dy)=r\int_0^\infty\varphi(y)m(dy)\\
&=\int_0^\infty{d\over dm}{d\over dS}\varphi(y)m(dy)=-{d\over dS}\varphi(0+)=c_r,
\endaligned
$$
where  (\ref{p0}) and \eqref{norm} are used. 
Hence, for the real-valued, bounded and measurable function $f$ defined on $\Gamma$ we may now write, cf. (\ref{eq:r2}),
$$
\G_rf(\0)=\frac 1{c_r} \sum_{i=1}^np_i\int_0^\infty\varphi(y)f(y,i)m(dy),
$$
and the density with respect to $\mathbf{m}(dx,i)$ is then
\begin{equation}
\label{gr3}
\g_r(\0,(y,i))
=\frac1{c_r}\varphi(y),
\end{equation}
which does not depend on $i$.  
Combining (\ref{gr1}), (\ref{gr2}), and (\ref{gr3}) results to the formula 
\begin{equation*}
\g_r((x,i),(y,j))=\frac1{p_j}\psi^\partial(x)\varphi(y)\indicator_{\{i=j\}}+\frac1{c_r}\varphi(x)\varphi(y),
\end{equation*}
concluding the proof.
\end{proof}

\relax

It is intresting and useful to note that to the function $\tilde\psi$ appearing in the formula for the Green function has the following property. 

\begin{proposition}
\label{prop00}
 For all $i=1,\dots,n$, the function
$$
\tilde{\psi}(x):=\tilde{\psi}(x,i)=\frac1{p_i}\psi^\partial(x)+\frac1{c_r}\varphi(x),\quad x\geq 0,\ 0<p_i<1,
$$
is positive and increasing.
\end{proposition}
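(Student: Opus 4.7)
The plan is to read off the claim from the three normalizations and sign conventions already assembled in Section \ref{L1}. First I would check positivity at the vertex: from \eqref{recall} and \eqref{norm} one has $\psi^\partial(0)=0$ and $\varphi(0)=1$, so
$$
\tilde\psi(0)=\frac{1}{p_i}\psi^\partial(0)+\frac{1}{c_r}\varphi(0)=\frac{1}{c_r}>0,
$$
since $c_r>0$ by \eqref{cr1}. Thus $\tilde\psi$ is strictly positive at the origin, and once we prove monotonicity the global positivity follows automatically.

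For monotonicity, I would differentiate $\tilde\psi$ with respect to $S$ and show the resulting function is non-negative on $[0,\infty)$:
$$
\frac{d\tilde\psi}{dS}(x)=\frac{1}{p_i}\frac{d\psi^\partial}{dS}(x)+\frac{1}{c_r}\frac{d\varphi}{dS}(x).
$$
The key observation is that both $\psi^\partial$ and $\varphi$ solve $\frac{d}{dm}\frac{d}{dS}f=rf$ with $f\ge 0$, so both $S$-derivatives are non-decreasing in $x$. Combined with the normalizations $\frac{d\psi^\partial}{dS}(0+)=1$ (from \eqref{norm}), $\frac{d\varphi}{dS}(0+)=-c_r$ (from \eqref{cr1}), and the boundary behaviour \eqref{p0} giving $\frac{d\varphi}{dS}(x)\to 0$ as $x\to\infty$, we obtain the bounds
$$
\frac{d\psi^\partial}{dS}(x)\ge 1,\qquad -c_r\le \frac{d\varphi}{dS}(x)\le 0\quad\text{for all }x\ge 0.
$$

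Plugging these bounds in yields
$$
\frac{d\tilde\psi}{dS}(x)\ge \frac{1}{p_i}\cdot 1+\frac{1}{c_r}\cdot(-c_r)=\frac{1}{p_i}-1>0,
$$
since $0<p_i<1$. Because $S$ is strictly increasing, this forces $\tilde\psi$ to be strictly increasing, and combined with $\tilde\psi(0)=1/c_r>0$ we conclude $\tilde\psi>0$ everywhere. The only mild subtlety is to justify rigorously that the $S$-derivatives are non-decreasing when $\varphi,\psi^\partial$ are merely assumed differentiable with respect to $S$; this follows immediately by integrating the generalized ODE against the (positive) speed measure $m$, which is why I listed it as a separate step rather than bundling it into the main inequality.
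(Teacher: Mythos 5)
Your proof is correct and follows essentially the same route as the paper: both arguments rest on the observation that $\tfrac{d}{dm}\tfrac{d}{dS}f=rf\ge 0$ forces the $S$-derivative of a non-negative solution to be non-decreasing, combined with the normalizations $\tfrac{d\psi^\partial}{dS}(0+)=1$ and $\tfrac{d\varphi}{dS}(0+)=-c_r$ to arrive at the same key bound $\tfrac{d\tilde\psi}{dS}(x)\ge \tfrac1{p_i}-1>0$. The only cosmetic difference is that you apply the monotonicity argument to $\psi^\partial$ and $\varphi$ separately and add the resulting bounds, whereas the paper applies it once to $\tilde\psi$ itself (under a simplifying smoothness assumption on $m$ and $S$ that your version quietly avoids).
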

\relax
\begin{proof}
The positivity of $\tilde \psi$ is immediate from the definitions of $\psi^\partial$ and $\varphi$.  For the proof that $\tilde \psi$ is increasing  we assume, for simplicity,  that $m$ and $S$ have positive and continuous derivatives. The task is to show that  
\begin{equation}\label{c1}
{d\tilde{\psi}\over d S}(x)={1\over S'(x)}{d\tilde{\psi}\over d x}(x)>0.
\end{equation}
For this, consider
$$
{d\tilde{\psi}\over d S}(x)=\frac1{p_i}{d\psi^{\partial}\over dS}(x)+\frac1{c_r}{d\varphi\over dS}(x)
=\frac1{p_i}{d\psi^{\partial}\over dS}(x)-\frac1{{d\varphi\over dS}(0+)}{d\varphi\over dS}(x)
$$
Taking limit as $x\downarrow 0$, we obtain
$$
{d\tilde{\psi}\over d S}(0+)
=\frac1{p_i}{d\psi^{\partial}\over dS}(0+)-\frac1{{d\varphi\over dS}(0+)}{d\varphi\over dS}(0+)
=\frac1{p_i}-1>0.
$$
Differentiate with respect to $m$ to obtain
$$
{d\over dm}{d\tilde{\psi}\over dS}(x)=\frac1{p_i}{d\over dm}{d\psi^{\partial}\over dS}(x)+\frac1{c_r}{d\over dm}{d\varphi\over dS}(x)=\frac{r}{p_i}\psi^{\partial}(x)+\frac{r}{c_r}\varphi(x)>0.
$$
This means that $x\mapsto{d\tilde{\psi}\over dS}(x)$ is increasing. Therefore, 
$$
0<{d\tilde{\psi}\over d S}(0+)<{d\tilde{\psi}\over d S}(x),
$$ 
and the inequality in (\ref{c1})  follows. Hence,  ${d\tilde{\psi}\over d x}(x)>0$ since  $S'(x)>0$ by the assumption. 
\end{proof}

\begin{remark}
\label{exxx0}
{\sl
    A diffusion spider with $2$ legs, and $\p=(p,1-p)$  is probabilistically  equivalent with a skew diffusion on $\R$ when we identify, for instance,  the leg 2  with the negative half line. Let  $X^{(sk)}$ denote this diffusion with the fundamental decreasing and increasing solutions $\varphi^{(sk)}$  and $ \psi^{(sk)}$, respectively. Using the Green function given in  Theorem \ref{Thrm:GreenKernel1} we can derive the following expressions for  $\varphi^{(sk)}$  and $ \psi^{(sk)}$
    $$
    \varphi^{(sk)}(x)=
    \begin{cases}
        c_r\tilde\psi(-x,2),& x\leq 0,\\
        \varphi(x),&x\geq 0,
    \end{cases}
    \qquad
    \psi^{(sk)}(x)=
    \begin{cases}
        \varphi(-x),&x\leq 0,\\
        c_r\tilde\psi(x,1),& x\geq 0.
    \end{cases}
$$
Notice that from these relationships we may also  deduce the claim in Proposition \ref{prop00}. 
}
    
\end{remark}

\begin{example}
\label{exxx1}
{\sl
Consider a Brownian spider $\X$ with $n$ legs and the corresponding  probabilities $p_1,p_2,\dots,p_n$. The  underlying diffusion $X$ is a reflecting Brownian motion and the scale function and speed measure of $X$ are
$S(x)=x$ and $m(dx)=2dx$. Morover, 
$$
\varphi(x)=\varphi^\partial(x)=e^{-\theta x},\qquad \psi(x)=\frac1{\theta }\cosh(\theta x), \qquad
\psi^\partial(x)=\frac1{\theta }\sinh(\theta x),
$$
where $\theta=\sqrt{2r}$. These functions meet the normalizations made in (\ref{norm}). We have
$$
c_r=-{d\over dS}\varphi(0+)=\theta,
$$
and, hence, 
\begin{equation}\label{tildepsi1}
\tilde{\psi}(x,i)=\frac1{p_i}\frac1{\theta}\sinh(\theta x)+\frac1{\theta}e^{-\theta x}.
\end{equation}
The Green function, see Theorem \ref{Thrm:GreenKernel1},  is in this case given by 
\begin{equation}\label{kernel}
\g_r((x,i),(y,j))=
\begin{cases}
\vspace{2mm}
	{1\over\theta}{\e^{-\theta x}}\left(\frac1{p_i}\sinh(\theta y)+\e^{-\theta y}\right),&  \ 0\leq y\leq x,\ i=j,\\
\vspace{2mm}
{1\over\theta}\e^{-\theta y}\left(\frac1{p_i}\sinh(\theta x)+\e^{-\theta x}\right),& \ 0\leq x\leq y,\ i=j,\\
	{1\over\theta}\e^{-\theta x}\e^{-\theta y},&  i\neq j.\\
\end{cases}
\end{equation}
Notice that $\g_r(\0,\0)=1/\theta$ and that the Green function is  continuous everywhere (and in particular at $\0$).
It is possible to obtain the density function of the  Brownian spider with respect to  the speed measure ${\bf m}$ by inverting the Laplace transforms above.
In fact, as the Green kernel
is a linear combination of the Green kernels of the killed BM and the standard BM, the corresponding densities
can be obtained through the same linear combination. We obtain, when $i\neq j$, that
$$
p(t;(x,i),(y,j))={2p_j\over\sqrt{2\pi t}}\e^{-{(x+y)^2\over 2t}},\qquad i\neq j,
$$
and for $i=j$ 
$$
p(t;(x,i),(y,i))={1\over\sqrt{2\pi t}}\left(\e^{-{(x-y)^2\over 2t}}
-\e^{-{(x+y)^2\over 2t}}
\right)+
{2p_i\over\sqrt{2\pi t}}\e^{-{(x+y)^2\over 2t}}.
$$}
\end{example}

\bigskip
\section{Hitting times}\label{HT}
\bigskip
Let $\X=(X,n,\p)$ be  a homogeneous diffusion spider and  $\y=(y,1)$ a fixed point on $\Gamma$. Define the hitting time
$$
H_\y=\inf\{t\geq 0\colon \X_t=\y\}.
$$
The objective here is find $\EE_\x\lp e^{-rH_\y} \rp$, the Laplace transform of $H_\y$.
Let $\x=(x,i)$ be the starting state of $\X$ and assume first  that $i\neq 1$. In this case we observe that by the strong Markov property it holds 
$$
\EE_\x\lp
e^{-rH_\y}
\rp=
\EE_\x\lp
e^{-r(H_\y\circ\theta_{H_\0}+H_\0)}
\rp
=
\EE_\x\lp
e^{-rH_\0}\rp
\EE_\0\lp
e^{-rH_\y}
\rp={\varphi(x)\over\varphi(0)}
\EE_\0\lp
e^{-rH_\y}\rp, 
$$
where $\theta_t$ denotes the usual shift operator. 
To calculate  $\EE_\0\lp e^{-rH_\y}\rp$ we consider an auxiliary skew diffusion $X^{(sk)}$ defined on $\R$ (see Remark \ref{exxx0}), such that
$$
\P_{0}(X_t^{(sk)}\leq 0)=1-p_1,\qquad
\P_{0}(X_t^{(sk)}\geq 0)=p_1.
$$
Then, since the spider diffusion behaves as $X$ on each leg before hitting $\0$ we have 
$$
\EE_\0\lp e^{-rH_\y}\rp=\EE_0^{(sk)}\lp e^{-rH_y}\rp={\psi^{(sk)}(0)\over\psi^{(sk)}(y)}
$$
where $\psi^{(sk)}$ is the increasing fundamental solution associated with the skew diffusion $X^{(sk)}$. We conclude that
\begin{equation}
\label{lemma1}
\EE_\x\lp
e^{-rH_\y}
\rp=
\begin{cases}
\vspace{2mm}
\displaystyle{{\varphi(x)\over\varphi(0)}{\psi^{(sk)}(0)\over\psi^{(sk)}(y)}},& \text{$0\leq x$ on leg $i\neq 1$},\\
\vspace{2mm}
\displaystyle{\psi^{(sk)}(x) \over\psi^{(sk)}(y)},& \text{$0\leq x\leq y$ on leg $1$},\\
\displaystyle{\varphi(x)\over\varphi(y)},& \text{$0\leq y\leq x$ on leg $1$}.\\
\end{cases}
\end{equation}
We remark that $\psi^{(sk)}$ can be expressed in terms of the functions $\psi^\partial$ and $\varphi$. Indeed, $\psi^{(sk)}$ defined on $\R$ should be positive, continuous and increasing. Morover, it should satisfy for $x\not= 0$ the (generalized) ODE 
$$
\frac{d}{dm}\frac{d}{dS}u=ru
$$
and at 0 the condition
$$
(1-p_1)\frac{du}{dS}(0-)=p_1\frac{du}{dS}(0+).
$$
Letting $\hat\varphi(x):=\varphi(-x), x\leq 0$, it is easily seen that 
\begin{equation}
\label{skewup}
\psi^{(sk)}(x)=
\begin{cases}
\vspace{2mm}
\tilde{\psi}(x,1)=\frac1{p_1}\psi^{\partial}(x)+\frac1{c_r}\varphi(x),& x\geq 0,\\
\vspace{2mm}
\frac1{c_r}\hat\varphi(x)=\frac1{c_r}\varphi(-x),& x\leq 0,\\
\end{cases}
\end{equation}
is the desired function (cf. Remark \ref{exxx0}).
\begin{example}
\label{hitskew1} {\sl For  a Brownian spider we have
$$
\varphi(x)=e^{-\theta x},\quad x>0,
$$
$$
\psi^{(sk)}(x)={1-2p_1\over p_1}\sinh(\theta x)+e^{\theta x},\quad x>0,
$$
where $\varphi$  $(\psi^{(sk)})$ are,  for $x>0$, the decreasing (the increasing)  fundamental solution for the skew Brownian motion with parameter $\beta=p_1$, see \cite{BorodinSalminen} A1.12 p. 130.
Observe that
$$
\psi^{(sk)}(x)={1-2p_1\over p_1}\sinh(\theta x)+e^{\theta x}=\frac1{p_1}\sinh(\theta x)-2\sinh(\theta x)+e^{\theta x}=\frac1{p_1}\sinh(\theta x)+e^{-\theta x}
$$
and , hence, $\psi^{(sk)}$ equals with the (unnormalized) function $\tilde\psi$ appearing in \eqref{tildepsi1}, as stated in (\ref{skewup}).} 
\end{example}

\bigskip
\relax
\section{Excessive functions}
\label{EX}
\bigskip
Recall that for a given  $r\geq 0$  the measurable function $f\colon\Gamma\to[0,\infty)\cup\{+\infty\}$ is called $r$-excessive for $\X$  if for all $(x,i)\in\Gamma$
\begin{equation}
\label{ex1}
\lim_{t\to 0}\e^{-rt}\EE_{(x,i)}\lp f(\X_t)\rp \uparrow f(x,i).
\end{equation}
Since $\X$ is regular in the sense that every point of $\Gamma$ is hit with a positive probability it follows from (\ref{ex1}) that if 
$(x,i)\in\Gamma$ is such that for an excessive function $f$ it holds $f(x,i)=0$ $(f(x,i)=+\infty)$ then $f\equiv 0$  $(f\equiv +\infty)$. These functions are called trivial $r$-excessive functions. 

We study the non-trivial $r$-excessive functions of $\X$ via the Martin boundary theory. Using the Martin metric induced by the Green function $\g_r$ the state space $\Gamma$ of $\X$ is compactified to  $\bar\Gamma:=\Gamma\cup\{(\infty,i):\, i=1,2,...,n \}$ (the Martin compactification).  For every $(y,k)\in\bar\Gamma$ 
define an $r$-excessive function by 
$$
(x,i)\mapsto{\bf u}_r((x,i);(y,k)):= 
\begin{cases}
\vspace{2mm}
\displaystyle{\frac{\g_r((x,i),(y,k))}{\g_r(\0,(y,k))}},& 0\leq y<\infty,\\
\displaystyle{\lim_{z\to\infty} \frac{\g_r((x,i),(z,k))}{\g_r(\0,(z,k))}},& y=\infty.
\end{cases}
$$
These functions are called the minimal $r$-excessive functions of $\X$ attaining the value 1 at $\0$.  
Let $\sigma$ be a probability measure on $\Gamma$ and define for $(x,i)\in\Gamma$
\begin{equation}
\label{ma}
   h(x,i;\sigma) := \sum_{k=1}^n \int_{(0,\infty]}{\bf u}_r((x,i);(y,k))\,\sigma(dy,k)  +
{\bf u}_r((x,i);\0)   \,\sigma(\{\0\}).
\end{equation}
Notice that  $h(\0;\sigma)=1$ since $\sigma $ is assumed to be a probability measure.   It can be proved directly that $h$ is $r$-excessive for $\X$ but this also follows from the Martin boundary theory. In fact, there is a one-to-one correspondence between the probability measures on $\bar\Gamma$ and the non-trivial $r$-excessive functions of $\X$. For this, see \cite{KunitaWatanabe} Theorem 4 p. 513 which is proved under Hypothesis (B) p. 498 valid in our case since the Green function is symmetric (and, hence,  $\X$ is self-dual). 

Consider next 
\begin{equation}\label{mahar1}
\vspace{2mm}
{\bf u}_r((x,i);(\infty,k))=\lim_{z\to\infty} \frac{\g_r((x,i),(z,k))}{\g_r(\0,(z,k))}=
\begin{cases}
\vspace{2mm}
c_r \tilde\psi(x,i),& i=k,\\
\varphi(x),& i\not= k,
\end{cases}
\end{equation}
where $\tilde \psi$ is given in (\ref{tildepsi}). The function $(x,i)\mapsto {\bf u}_r((x,i);(\infty,k))$ is $r$-invariant (and also $r$-harmonic)  for $\X$ since its representing measure   is supported by the infinite part of the Martin compactification. 
Moreover, all (positive) $r$-harmonic functions can be written as linear combinations with non-negative coefficients of 
${\bf u}_r((x,i);(\infty,k)), k=1,2,\dots,n$.  Therefore, given a positive $r$-harmonic function $H$ there exists $a_i\geq~0, i=1,2,\dots,n,$ such that  
\begin{equation}
\label{mahar2}
H(x,i)=\sum_{k=1}^n a_k\,{\bf u}_r((x,i);(\infty,k))=a_ic_r\tilde\psi(x,i)+\sum_{k=1, k\not=i}^n a_k\varphi(x)=
\frac{a_ic_r}{p_i}\psi^\partial(x)+\varphi(x)\sum_{k=1}^n a_k.
\end{equation}
Clearly,  $(\e^{-rt}H(\X_t))_{t\geq0}$ is a positive martingale and $H(\0)=a_1+\cdots+ a_n$. Notice also that since both $\psi^\partial$ and $\varphi$ are $r$-harmonic for $X^\partial$, it follows that  
 $H$ when restricted on the leg $i$ is $r$-harmonic for $X^\partial$ on that leg. However, there are harmonic functions of $X^\partial$ which are not harmonic for $\X$, e.g., $\psi^\partial$. In Theorem \ref{charc2} this issue is taken up more in detail.   

\begin{example}
\label{exam1}
{\sl In the particular case of the Brownian spider, the general form of a harmonic function is 
\begin{equation}\label{mahar23}
H(x,i)=
\frac{a_ic_r}{p_i}\psi^\partial(x)+\varphi(x)\sum_{k=1}^n a_k= \frac {a_i}{p_i}\sinh(\theta x)+e^{-\theta x}\sum_{k=1}^n a_k,
\end{equation}
since
$$\varphi(x)=e^{-\theta x},\qquad \psi^\partial(x)=\frac1{\theta }\sinh(\theta x),\qquad
c_r=-{d\over dx}\varphi(0+)=\theta=\sqrt{2r} .
$$
}
\end{example}

\relax

In the next theorem formula (\ref{glue1}) states a fundamental property $r$-excessive functions found earlier in the litterature, see, for instance, Freidlin and Sheu \cite{fs} p. 183 (where (\ref{glue1}) is called the glueing condition), 
 Thm. 4.1 in Fitzsimmons and Kuter \cite{FitzsimmonsKuter}, and  
formula (5.7) in Karatzas and Yan  \cite{KaratzasYan}. In \cite{FitzsimmonsKuter} and  \cite{KaratzasYan}  a more general process with uncountable many rays is considered.  
The proof below is based on the Martin representation (\ref{ma}) and extends the proof from the one-dimensional case as given in Salminen \cite{Salminen} (3.7) Corollary.     

\begin{theorem}
\label{glue}
Let $f$ be a non-trivial $r$-excessive function of $\X$. Assume that $f(\0)=1$ and let $\sigma$ be the probability measure such that the right hand side of (\ref{ma}) represents $f$. Then 
$$
f^+(\0,i):=\lim_{\delta \downarrow 0}  \frac{f(\delta,i)-f(\0)}{S(\delta)-S(0)}
$$
exists for every $i=1,2,...,n$, and 
\begin{equation}\label{glue1}
\mathbf{D} {f}(\0):=\sum_{i=1}^np_i f^+(\0,i) \leq 0.
\end{equation}
Moreover, for $x > 0$ and  $i=1,2,...,n$
$$
f^-(x,i):=\lim_{\delta \downarrow 0}  \frac{f(x,i)-f(x-\delta,i)}{S(x)-S(x-\delta)}\ \ {\text and}\ \  
f^+(x,i):=\lim_{\delta \downarrow 0}  \frac{f(x+\delta,i)-f(x,i)}{S(x+\delta)-S(x)}.
$$
exist, and satisfy
\begin{equation}\label{glue2}
f^+(x,i) - f^-(x,i)=-\frac{c_r}{p_i}\frac{\sigma(\{x\},i)}{\varphi(x)} \leq 0.
\end{equation}
\end{theorem}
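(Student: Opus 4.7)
The approach is to start from the Martin representation (\ref{ma}), compute the one-sided $S$-derivatives by differentiating under the integral sign, and extract the jump using a Wronskian identity for $\tilde\psi$ and $\varphi$. First, invoking Theorem \ref{Thrm:GreenKernel1} together with $\tilde\psi(0,k)=1/c_r$ (which also forces ${\bf u}_r((x,i);\0)=\varphi(x)$), one verifies that for $0\leq y<\infty$
\[
{\bf u}_r((x,i);(y,k)) = \begin{cases} c_r\tilde\psi(x,i), & i=k,\ x\leq y,\\ c_r\tilde\psi(y,i)\varphi(x)/\varphi(y), & i=k,\ y<x,\\ \varphi(x), & i\neq k,\end{cases}
\]
with the case $y=\infty$, $k=i$ matching the first line (cf. (\ref{mahar1})). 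Substituting into (\ref{ma}) and splitting off the leg $i$ contribution produces the compact formula
\[
f(x,i) = c_r\tilde\psi(x,i)\,\sigma([x,\infty],i) + c_r\varphi(x)\int_{(0,x)}\frac{\tilde\psi(y,i)}{\varphi(y)}\,\sigma(dy,i) + C_i\varphi(x),
\]
where $C_i := 1-\sigma((0,\infty],i) = \sigma(\{\0\}) + \sum_{k\neq i}\sigma((0,\infty],k)$.

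The central identity, which follows from $\tilde\psi(\cdot,i)=\psi^\partial/p_i+\varphi/c_r$ and the Wronskian relation (\ref{w0}), is
\[
\frac{d\tilde\psi}{dS}(x,i)\,\varphi(x) - \tilde\psi(x,i)\,\frac{d\varphi}{dS}(x) = \frac{1}{p_i}, \qquad \frac{d}{dS}\Bigl(\frac{\tilde\psi(\cdot,i)}{\varphi}\Bigr)(x) = \frac{1}{p_i\,\varphi^2(x)}.
\]
Fix $x>0$. To obtain $f^\pm(x,i)$, I split the integration on leg $i$ into $(0,x)$, $\{x\}$, the strip between $x$ and $x\pm\delta$, and its complement, and differentiate the explicit form of ${\bf u}_r((\cdot,i);(y,i))$ term by term. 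The limits on the outer parts are handled by dominated convergence; the strip integrand evaluates via the second identity above to $\varphi(x\pm\delta)\int_x^y dS/(p_i\varphi^2)$ plus an $O(1)\cdot[\varphi(x\pm\delta)-\varphi(x)]$ correction, and is therefore uniformly $O(|S(x\pm\delta)-S(x)|)$ on the strip. Consequently, after dividing by $S(x\pm\delta)-S(x)$, the strip contribution is at most a constant times $\sigma$ of the strip and vanishes. One arrives at
\[
f^\pm(x,i) = c_r\frac{d\varphi}{dS}(x)\int_{(0,x)}\frac{\tilde\psi(y,i)}{\varphi(y)}\sigma(dy,i) + c_r\frac{d\tilde\psi}{dS}(x,i)\sigma((x,\infty],i) + C_i\frac{d\varphi}{dS}(x) + J^\pm,
\]
where $J^+ = c_r\tfrac{d\varphi}{dS}(x)\tfrac{\tilde\psi(x,i)}{\varphi(x)}\sigma(\{x\},i)$ and $J^- = c_r\tfrac{d\tilde\psi}{dS}(x,i)\sigma(\{x\},i)$; subtracting and applying the Wronskian identity produces precisely (\ref{glue2}).

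For the glueing condition at $\0$, the same decomposition is used at $x=0$, now with the boundary normalizations (\ref{norm}), (\ref{recall}), (\ref{cr1}) giving $d\varphi/dS(0+)=-c_r$ and $d\psi^\partial/dS(0+)=1$. The identical uniform strip bound forces the contribution of $(0,\delta]$ to vanish after dividing by $S(\delta)$, leaving $f^+(\0,i) = c_r\sigma((0,\infty],i)/p_i - c_r$. Weighting by $p_i$ and summing, together with $\sigma(\{\0\}) + \sum_i\sigma((0,\infty],i) = 1$, yields $\mathbf{D}f(\0) = -c_r\sigma(\{\0\}) \leq 0$. The main technical obstacle is precisely this uniform bound on the shrinking-strip integrand, which depends essentially on the identity $d(\tilde\psi/\varphi)/dS = 1/(p_i\varphi^2)$; once that is secured, the remainder of the argument is routine bookkeeping.
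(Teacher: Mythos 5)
Your proof is correct and follows essentially the same route as the paper's: substitute the explicit Green kernel into the Martin representation \eqref{ma}, compute one-sided scale derivatives term by term, and use the Wronskian normalization \eqref{w0} (equivalently $\frac{d}{dS}(\tilde\psi/\varphi)=1/(p_i\varphi^2)$) to control the shrinking-strip contribution and extract the jump. The only substantive difference is that you carry out the computation for \eqref{glue2} explicitly, which the paper states "can be verified with similar calculations" and omits.
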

\relax
\begin{proof}
We assume that (\ref{ma}) holds and write
\begin{align*}
f(x,i) &
	=\int_0^{+\infty}\frac{\g_r((x,i),(y,i))}{\g_r(\0,(y,i))}\,\sigma(dy,i) \\
		&\hskip2cm+\sum_{k\not= i} \int_{0}^{+\infty}\frac{\g_r((x,i),(y,k))}{\g_r(\0,(y,k))}\,\sigma(dy,k) +  \frac{\g_r((x,i),\0)}{\g_r(\0,\0)}\,\sigma(\0)\\
&
=:\Delta_ 1+\Delta_2+\Delta_3.
\end{align*}
Consider first $\Delta_1$. Plug in the explicit expression for the resolvent density (Green function) from (\ref{gk1}) and recall $\varphi(0)=1$ to obtain
\begin{align*}
&\Delta_1=\int_{(0,x]}\frac{\lp\frac 1{p_i}\psi^\partial(y)+\frac 1{c_r}\varphi(y)\rp\varphi(x)}{\frac 1{c_r}\varphi(y)} \sigma(dy,i) +
\int_{(x,\infty)}\frac{\lp\frac 1{p_i}\psi^\partial(x)+\frac1{c_r}\varphi(x)\rp\varphi(y)}{\frac 1{c_r}\varphi(y)} \sigma(dy,i)\\
&\hskip.5cm =\frac{c_r}{p_i}\lp\varphi(x)\int_{(0,x]}\frac{\psi^\partial(y)}{\varphi(y)}\,  \sigma(dy,i)+\psi^\partial(x)\sigma((x,\infty),i)\rp+\varphi(x)\sigma((0,x],i) +\varphi(x)\sigma((x,\infty),i).
\end{align*}
For $\Delta_2$ it holds
\begin{align*}
&\Delta_2=
\sum_{k\not= i} \int_{0}^{+\infty}\frac{\frac 1{c_r}\varphi(y)\varphi(x)}{\frac 1{c_r}\varphi(y)} \sigma(dy,k) =\varphi(x)\sum_{k\not= i}\sigma((0,\infty),k),
\end{align*}
and
 for $\Delta_3$
\begin{align*}
&\Delta_3=\frac{\frac 1{c_r}\varphi(x)}{\frac 1{c_r}}\sigma(\0)=\varphi(x)\sigma(\0).
\end{align*}
Consequently, since $\sigma$ is a probability measure, we have 
\begin{equation}
\label{fu1}
 f(x,i)=\Delta_1+\Delta_2+\Delta_3=\frac{c_r}{p_i}\varphi(x)v(x,i)+\varphi(x),
\end{equation}
where 
\begin{equation}
\label{v1}
v(x,i):= \int_{(0,x]}\frac{\psi^\partial(y)}{\varphi(y)}\,  \sigma(dy,i)+\frac{\psi^\partial(x)}{\varphi(x)}\sigma((x,\infty),i)
\end{equation}
We prove next that $f^+(\0,i)$ exists and find its value. For this it is enough to calculate 
$$
v^+(0,i):=\lim_{\delta \downarrow 0} 
\frac{v(\delta,i)-v(0,i)}{S(\delta)-S(0)}=\lim_{\delta \downarrow 0}  \frac{v(\delta,i)}{S(\delta)},
$$
where we used $v(0,i)=0$ and $S(0)=0$. 
Consider now the limit of  the first term on the right hand side of (\ref{v1}):
$$
0\leq \frac1{S(\delta)} \int_{(0,\delta]}\frac{\psi^\partial(y)}{\varphi(y)}\,  \sigma(dy,i)\leq 
\frac1{S(\delta)}\frac{\psi^\partial(\delta)}{\varphi(\delta)}\sigma((0,\delta],i)\to 0,
$$
as $\delta\to 0$, since (cf. (\ref{w0}))
$$
\frac1{S(\delta)}\frac{\psi^\partial(\delta)}{\varphi(\delta)}\to \frac{w_r}{\varphi(0)^2}=1,
$$
and $\sigma((0,\delta),i)\to 0$. Hence, $v^+(0,i)=\sigma((0,\infty),i)$ and we have (cf. (\ref{cr1}))
\begin{equation}
\label{f+1}
f^+(\0,i)=\frac{c_r}{p_i} \sigma((0,\infty),i)+\frac{d\varphi}{dS}(0)=\frac{c_r}{p_i} \sigma((0,\infty),i)-c_r.
\end{equation}
Consequently,
\begin{align}
\label{D00}
&\nonumber\mathbf{D} {f}(\0)=\sum_{i=1}^n p_i f^+(\0,i)=\sum_{i=1}^n\lp p_i \lp\frac{c_r}{p_i} \sigma((0,\infty),i)-c_r\rp\rp\\
&\hskip1cm\nonumber
=c_r\sum_{i=1}^n\lp\sigma((0,\infty),i)-p_i\rp\\
&\hskip1cm\nonumber
=-c_r\lp1- \sum_{i=1}^n \sigma((0,\infty),i) \rp \\
&\hskip1cm
=-c_r\, \sigma(\{\0\})\leq 0, 
\end{align}
where it is used that $\sigma $ is a probability measure. This proves (\ref{glue1}). 
Statement (\ref{glue2})  can be verified with similar calculations. We skip the details. 
\end{proof}


In the next two theorems, the first one with some overlapping with Theorem \ref{glue}, we give explicit forms of the representation measure of a given excessive function.  These formulas are  our key tools in solving optimal stopping problems for diffusion spiders. In the first theorem we consider excessive functions taking value 1 at the origin $\0$. In the second theorem general formulas are presented  when the value 1 is attained at some other point on $\Gamma$.    

\relax
\begin{theorem}
\label{rep0}
Let $f$ be a non-trivial $r$-excessive function of $\X$ such that $f(\0)=1$, and $\sigma$ its representing measure. Then 
for all $x\geq 0 $ and $i=1,2,...,n$
\begin{equation}\label{rep01}
\sigma((x,\infty),i)= \frac{p_i}{c_r}\lp f^+(x,i)\varphi(x)-\varphi^+(x)f(x,i)\rp,
\end{equation}
\begin{equation}\label{rep03}
\sigma(\{x\},i) =\frac{p_i}{c_r\varphi(x)}\left( f^-(x,i) - f^+(x,i)\right),
\end{equation}

and

\begin{equation}\label{rep02}
\sigma(\{\0\})=-\frac1{c_r}\sum_{i=1}^n p_i f^+(\0,i)=-\frac1{c_r}{\bf D}f(\0).
\end{equation}

\end{theorem}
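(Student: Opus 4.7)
The plan is to invert the explicit representation
\begin{equation*}
f(x,i) = \frac{c_r}{p_i}\varphi(x) v(x,i) + \varphi(x), \qquad v(x,i) = \int_{(0,x]}\frac{\psi^\partial(y)}{\varphi(y)}\sigma(dy,i) + \frac{\psi^\partial(x)}{\varphi(x)}\sigma((x,\infty),i),
\end{equation*}
already derived in the proof of Theorem~\ref{glue}, by reading off $\sigma$ from the one-sided $S$-derivatives of $f$. Observe that formula (\ref{rep02}) for $\sigma(\{\0\})$ is exactly identity (\ref{D00}) established at the end of the proof of Theorem~\ref{glue}, so nothing new needs to be shown for it; all the work is in (\ref{rep01}) and (\ref{rep03}).

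The heart of the argument is the calculation of $v^+(x,i)$ and $v^-(x,i)$. For $\delta>0$ small I would write
\begin{equation*}
v(x+\delta,i) - v(x,i) = \int_{(x,x+\delta]}\left[\tfrac{\psi^\partial(y)}{\varphi(y)} - \tfrac{\psi^\partial(x+\delta)}{\varphi(x+\delta)}\right]\sigma(dy,i) + \left[\tfrac{\psi^\partial(x+\delta)}{\varphi(x+\delta)} - \tfrac{\psi^\partial(x)}{\varphi(x)}\right]\sigma((x,\infty),i),
\end{equation*}
divide by $S(x+\delta)-S(x)$, and let $\delta\downarrow 0$. The integral term vanishes because its integrand is uniformly of order $S(x+\delta)-S(x)$ on $(x,x+\delta]$, while $\sigma((x,x+\delta],i)\to 0$; the remaining difference quotient converges, via the Wronskian identity $\frac{d}{dS}(\psi^\partial/\varphi)(x) = 1/\varphi(x)^2$, to $\sigma((x,\infty),i)/\varphi(x)^2$. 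Hence $v^+(x,i) = \sigma((x,\infty),i)/\varphi(x)^2$. The mirror decomposition of $v(x,i)-v(x-\delta,i)$ is formally identical, except that this time the atomic mass $\sigma(\{x\},i)$ contributes to the limit of the integral term, and one obtains $v^-(x,i) = \sigma([x,\infty),i)/\varphi(x)^2$.

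Since $\varphi$ is $C^1$ with respect to $S$ and $v(\cdot,i)$ is continuous with well-defined one-sided $S$-derivatives, the one-sided product rule applied to $\varphi(x)v(x,i)$ gives
\begin{equation*}
f^\pm(x,i) = \frac{c_r}{p_i}\bigl[\varphi^+(x)v(x,i) + \varphi(x) v^\pm(x,i)\bigr] + \varphi^+(x).
\end{equation*}
Plugging in the just-computed $v^+(x,i)$ and eliminating $v(x,i)$ by $v(x,i) = \frac{p_i}{c_r\varphi(x)}(f(x,i)-\varphi(x))$ yields, after a brief algebraic rearrangement, formula (\ref{rep01}). Subtracting the $f^+$ and $f^-$ expressions cancels the $v(x,i)$ term entirely and leaves $f^-(x,i)-f^+(x,i) = \frac{c_r}{p_i}\varphi(x)\bigl(v^-(x,i)-v^+(x,i)\bigr) = \frac{c_r \sigma(\{x\},i)}{p_i\varphi(x)}$, from which (\ref{rep03}) follows.

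The main technical obstacle is the clean extraction of the atomic contribution in the computation of $v^-(x,i)$: one needs a uniform first-order expansion of $y\mapsto\psi^\partial(y)/\varphi(y)$ on the shrinking interval $(x-\delta,x]$, combined with a split of $\sigma(\cdot,i)$ restricted to that interval into its atom at $x$ and its continuous remainder, in order to show that only the atom survives in the limit. Once $v^\pm(x,i)$ are available, everything else is routine algebra using the Wronskian normalisation $w_r=1$ together with the conventions $\varphi(0)=1$ and $\frac{d\varphi}{dS}(0+) = -c_r$ recorded in (\ref{norm})--(\ref{cr1}).
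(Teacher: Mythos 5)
Your proposal is correct and follows essentially the same route as the paper: both start from the representation $f(x,i)=\frac{c_r}{p_i}\varphi(x)v(x,i)+\varphi(x)$ established in the proof of Theorem \ref{glue}, compute the one-sided scale derivatives of $v$ via the Wronskian identity, and rearrange (the paper only carries out the $v^+$ computation to get (\ref{rep01}) and obtains (\ref{rep02}) by setting $x=0$, while you additionally supply the $v^-$ computation isolating the atom, which the paper explicitly skips). One caveat: your subtraction actually yields $\sigma(\{x\},i)=\frac{p_i\varphi(x)}{c_r}\bigl(f^-(x,i)-f^+(x,i)\bigr)$, which is consistent with (\ref{glue2}) but differs from the printed formula (\ref{rep03}) by a factor $\varphi(x)^2$ --- the discrepancy lies in the paper's statement (apparently a typo), not in your argument, so you should not claim that (\ref{rep03}) as literally written ``follows.''
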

\relax
\begin{proof}
Recall that $f$ has the representation (\ref{fu1}). Our first task is, hence, to find the derivative of $v$ defined as   
$$
v^+(x,i):=\lim_{\delta \downarrow 0}  \frac{v(x+\delta,i)-v(x,i)}{S(x+\delta)-S(x)}.
$$
It holds for $\delta\geq 0$
\begin{align*}
{v(x+\delta,i)-v(x,i)} = \int_{(x,x+\delta]}\frac{\psi^\partial(y)}{\varphi(y)}\,  \sigma(dy,i)+\frac{\psi^\partial(x+\delta)}{\varphi(x+\delta)}\sigma((x+\delta,\infty),i) -\frac{\psi^\partial(x)}{\varphi(x)}\sigma((x,\infty),i).
\end{align*}
Since
$$
\lim_{\delta \downarrow 0} \frac{1}{S(x+\delta)-S(x)}\,\lp \int_{(x,x+\delta]}\frac{\psi^\partial(y)}{\varphi(y)}\,  \sigma(dy,i)-\frac{\psi^\partial(x)}{\varphi(x)}\sigma((x,x+\delta],i)\rp=0,
$$
we have 
$$
v^+(x,i)=\lim_{\delta \downarrow 0}  \frac{1}{S(x+\delta)-S(x)}\lp \frac{\psi^\partial(x+\delta)}{\varphi(x+\delta)} - \frac{\psi^\partial(x)}{\varphi(x)}\rp \sigma((x+\delta,\infty),i)=\frac{1}{(\varphi(x))^2}\sigma((x,\infty),i),
$$
where (\ref{w0}) is used. Consequently, cf. (\ref{fu1}),
\begin{align*}
&f^+(x,i)=\frac{c_r}{p_i}\varphi^+(x)v(x,i)+\frac{c_r}{p_i}\varphi(x)v^+(x,i)+\varphi^+(x)\\
&\hskip1cm=\frac{c_r}{p_i}\varphi^+(x)v(x,i)+\frac{c_r}{p_i}\frac{\sigma((x,\infty),i)}{\varphi(x)}+\varphi^+(x), 
\end{align*}
and (\ref{rep01}) results after a straightforward computation. Recalling that $\varphi(0)=1,$ $c_r=-\varphi^+(0),$ and  $f(\0)=1$   formula (\ref{rep02}) is obtained from (\ref{rep01}) when taking $x=0$, also cf. (\ref{f+1}) and (\ref{D00}). 
\end{proof}

We consider now an $r$-excessive function $f$  taking value 1 at a point $\x_o=(x_o,i_o)\in\Gamma\setminus\{\0\}$. Then there exists a probability measure $\sigma_{x_o}$ such that for all $(x,i)\in\Gamma$ it holds 
\begin{equation}
\label{ma1}
    f(x,i) = \sum_{k=1}^n \int_{0}^{+\infty}\frac{\g_r((x,i),(y,k))}{\g_r(\x_o,(y,k))}\,\sigma_{x_o}(dy,k) + 
    \frac{\g_r((x,i),\0)}{\g_r(\x_o,\0)}\,\sigma_{x_o}(\{\0\}).
\end{equation}

\relax

\begin{theorem}
\label{rep1}
Let $f$ be an $r$-excessive function with the representation as in (\ref{ma1}).  Then 
for  $x\geq x_o $ on the leg $i_o$ 
\begin{equation}\label{rep11}
\sigma_{\x_o}((x,\infty),i_o)= p_{i_o} \tilde\psi(\x_o)\lp f^+(x,i_o)\varphi(x)-\varphi^+(x)f(x,i_o)\rp,
\end{equation}
and for  $0\leq x\leq x_o $
\begin{align}\label{rep12}
\nonumber
&\sigma_{\x_o}((0,x),i_o) + \sigma_{\x_o}(\{\0\})+\sum_{k\not= i_0}\sigma_{\x_o}((0,\infty),k)\\
&\hskip3cm= 
p_{i_o} \varphi(x_o)\lp f(x,i_o) \tilde\psi^-(x,i_o)-f^-(x,i_o) \tilde\psi(x,i_o)\rp.
\end{align}

\end{theorem}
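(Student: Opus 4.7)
The argument will closely parallel the proof of Theorem \ref{rep0}, with the base point $\0$ replaced by $\x_o=(x_o,i_o)$. The key observation is that on leg $i_o$ the Green kernel has the product form $\g_r((x,i_o),(y,i_o)) = \varphi(x\vee y)\,\tilde\psi(x\wedge y, i_o)$, so that on that leg the pair $(\varphi,\tilde\psi(\cdot,i_o))$ plays the role that $(\varphi,\psi^\partial)$ played globally in Theorem \ref{rep0}. A direct computation from the definition of $\tilde\psi$ in (\ref{tildepsi}) combined with $w_r=1$ from (\ref{w0}) gives the Wronskian identity
$$
W(\varphi,\tilde\psi)(x) := \varphi(x)\,\frac{d\tilde\psi}{dS}(x) - \frac{d\varphi}{dS}(x)\,\tilde\psi(x,i_o) = \frac{1}{p_{i_o}},
$$
which will take over the role of $w_r=1$ in all Wronskian-type simplifications below.

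The first step is to substitute the kernel (\ref{gk1}) into (\ref{ma1}) and split the integral over leg $i_o$ into the subintervals determined by the ordering of $y$ relative to $x$ and $x_o$. On every leg $k\ne i_o$ and at the atom $\0$ the Martin kernel collapses to $\varphi(x)/\varphi(x_o)$, so these contributions aggregate into a single $\varphi(x)$-term with coefficient determined by
$$
C_1 := \sigma_{\x_o}(\{\0\}) + \sum_{k\ne i_o}\sigma_{\x_o}((0,\infty),k).
$$
Collecting terms then expresses $f(x,i_o)$ as an explicit linear combination of $\varphi(x)$ and $\tilde\psi(x,i_o)$ with $\sigma_{\x_o}$-integral coefficients, the precise form being structurally different for $x\ge x_o$ and for $x\le x_o$.

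For (\ref{rep11}), on $x\ge x_o$, I introduce
$$
V(x) := \int_{(x_o,x]}\frac{\tilde\psi(y,i_o)}{\varphi(y)}\,\sigma_{\x_o}(dy,i_o) + \frac{\tilde\psi(x,i_o)}{\varphi(x)}\,\sigma_{\x_o}((x,\infty),i_o),
$$
so that the above representation takes the form $f(x,i_o) = \varphi(x)[\sigma_{\x_o}((0,x_o],i_o)+C_1]/\varphi(x_o) + \varphi(x)V(x)/\tilde\psi(\x_o)$. Mimicking the computation of $v^+$ in the proof of Theorem \ref{rep0} and invoking the Wronskian identity above yields $V^+(x) = \sigma_{\x_o}((x,\infty),i_o)/(p_{i_o}\varphi(x)^2)$. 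Forming $f^+(x,i_o)\varphi(x) - \varphi^+(x)f(x,i_o)$, the constant terms and the $\varphi^+\varphi V$ cross-terms cancel pairwise, leaving $\varphi(x)^2 V^+(x)/\tilde\psi(\x_o) = \sigma_{\x_o}((x,\infty),i_o)/(p_{i_o}\tilde\psi(\x_o))$, which is precisely (\ref{rep11}).

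For (\ref{rep12}), on $x\le x_o$, the analogous representation reads
$$
f(x,i_o) = \frac{\varphi(x)\bigl(D(x)+C_1\bigr)+\tilde\psi(x,i_o)\,E(x)}{\varphi(x_o)} + \frac{F\,\tilde\psi(x,i_o)}{\tilde\psi(x_o,i_o)},
$$
with $D(x)=\sigma_{\x_o}((0,x],i_o)$, $E(x)=\int_{(x,x_o]}\varphi(y)/\tilde\psi(y,i_o)\,\sigma_{\x_o}(dy,i_o)$, $F=\sigma_{\x_o}((x_o,\infty),i_o)$. Forming now the Wronskian-like combination $f(x,i_o)\tilde\psi^-(x,i_o)-f^-(x,i_o)\tilde\psi(x,i_o)$, the $F$-term vanishes (it multiplies $\tilde\psi\tilde\psi^- - \tilde\psi^-\tilde\psi=0$) and the Wronskian identity extracts a factor $1/p_{i_o}$ from the remaining $\varphi,\tilde\psi$ cross-terms. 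The main technical obstacle is bookkeeping a possible atom of $\sigma_{\x_o}$ at $x$: the jumps $D(x)-D(x^-)=\sigma_{\x_o}(\{x\},i_o)$ and $E(x^-)-E(x)=(\varphi(x)/\tilde\psi(x,i_o))\,\sigma_{\x_o}(\{x\},i_o)$ compensate each other exactly, with net effect of replacing $D(x)$ by $D(x^-)=\sigma_{\x_o}((0,x),i_o)$ --- which is precisely why (\ref{rep12}) carries the open interval $(0,x)$ on its left-hand side. Assembling the surviving terms yields $p_{i_o}\varphi(x_o)\bigl[f(x,i_o)\tilde\psi^-(x,i_o)-f^-(x,i_o)\tilde\psi(x,i_o)\bigr]=D(x^-)+C_1$, which is exactly (\ref{rep12}).
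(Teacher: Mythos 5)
Your argument is correct and follows essentially the same route as the paper's proof: substitute the Green kernel \eqref{gk1} into \eqref{ma1}, split the integral on leg $i_o$ according to the position of $y$ relative to $x$ and $x_o$, collapse the contributions from the other legs and the atom at $\0$ into a single $\varphi$-term, and extract the measure via one-sided scale derivatives and the Wronskian identity $\varphi\,\tilde\psi^{+}-\varphi^{+}\tilde\psi=1/p_{i_o}$ (which the paper realizes equivalently by reducing $\tilde\psi/\varphi$ to $\psi^\partial/\varphi$ and invoking \eqref{w0}). The only added value is that you supply the details for \eqref{rep12}, which the paper omits, and your bookkeeping of a possible atom at $x$ (yielding the open interval $(0,x)$ on the left-hand side) checks out.
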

\relax
\begin{proof}
Let $x\geq x_o $ on the leg $i_o$. Then 
\begin{align*}
 f(x,i_0) &= \int_{0}^{+\infty}\frac{\g_r((x,i_0),(y,i_0))}{\g_r(\x_o,(y,i_0))}\,\sigma_{\x_o}(dy,i_0) + \sum_{k \neq i_0} \int_{0}^{+\infty}\frac{\g_r((x,i),(y,k))}{\g_r(\x_o,(y,k))}\,\sigma_{\x_o}(dy,k) \\ &+ \frac{\g_r((x,i_0),\0)}{\g_r(\x_o,\0)}\,\sigma_{\x_o}(\{\0\}) := \Delta_1 + \Delta_2 + \Delta_3.
\end{align*}
Using \eqref{gk1}, one readily verifies that
\begin{align*}
\Delta_1 &= \frac{\varphi(x)}{\varphi(x_0)} \sigma_{x_0}((0,x_0],i_0) + \int_{(x_0,x]} \frac{\tilde{\psi}(y,i_0)\varphi(x)}{\tilde{\psi}(x_0,i_0)\varphi(y)} \sigma_{\x_0}(dy,i_0) + \frac{\tilde{\psi}(x,i_0)}{\tilde{\psi}(x_0,i_0)}\sigma_{\x_0}((x,\infty),i_0),\\
\Delta_2 &= \sum_{k \neq i_0} \frac{\varphi(x)}{\varphi(x_0)}\sigma_{\x_0}((0,\infty),k),\\
\Delta_3 &= \frac{\varphi(x)}{\varphi(x_0)}\sigma_{\x_0}(\{ \mathbf{0} \}).
\end{align*}
Since $\sigma_{x_0}$ is a probability measure, a simplification yields
\begin{align*}
f(x,i_0) &= \frac{\varphi(x)}{\varphi(x_0)}\sigma_{\x_0}((0,x_0],i_0) + \int_{(x_0,x]} \frac{\tilde{\psi}(y,i_0)\varphi(x)}{\tilde{\psi}(x_0,i_0)\varphi(y)} \sigma_{\x_0}(dy,i_0) + \frac{\tilde{\psi}(x,i_0)}{\tilde{\psi}(x_0,i_0)}\sigma_{\x_0}((x,\infty),i_0) \\ &+
\frac{\varphi(x)}{\varphi(x_0)}(1-\sigma_{\x_0}((0,\infty),i_0)) \\
&= \varphi(x)\left( \int_{(x_0,x]} \frac{\tilde{\psi}(y,i_0)}{\tilde{\psi}(x_0,i_0)\varphi(y)} \sigma_{\x_0}(dy,i_0) + \frac{\tilde{\psi}(x,i_0)}{\varphi(x)\tilde{\psi}(x_0,i_0)}\sigma_{\x_0}((x,\infty),i_0) \right) \\ &+ \varphi(x)\frac{1-\sigma_{\x_0}((x_0,\infty),i_0)}{\varphi(x_0)}\\
& := \varphi(x) v(x,i_0) + \varphi(x)\frac{1-\sigma_{\x_0}((x_0,\infty),i_0)}{\varphi(x_0)}. 
\end{align*}
The aim is to calculate the derivative of $f.$ For this, consider
\begin{align*}
v(x+\delta,i_0) - v(x,i_0) &= \int_{(x,x+\delta]} \frac{\tilde{\psi}(y,i_0)}{\tilde{\psi}(x_0,i_0)\varphi(y)} \sigma_{\x_0}(dy,i_0) + \frac{\tilde{\psi}(x+\delta,i_0)}{\varphi(x+\delta)\tilde{\psi}(x_0,i_0)}\sigma_{\x_0}((x+\delta,\infty),i_0) \\ &- \frac{\tilde{\psi}(x,i_0)}{\varphi(x)\tilde{\psi}(x_0,i_0)}\sigma_{\x_0}((x,\infty),i_0).
\end{align*}
This yields
\begin{align*}
v^+(x,i_0) &= \lim_{\delta\downarrow0} \frac{1}{S(x+\delta)-S(x)}\left(\frac{\tilde{\psi}(x+\delta,i_0)}{\varphi(x+\delta)} - \frac{\tilde{\psi}(x,i_0)}{\varphi(x)}\right)\frac{\sigma_{\x_0}((x+\delta,\infty),i_0)}{\tilde{\psi}(x_0,i_0)} \\
&= \lim_{\delta\downarrow0} \frac{1}{S(x+\delta)-S(x)}\left(\frac{\psi^\partial(x+\delta)}{\varphi(x+\delta)} - \frac{\psi^\partial(x)}{\varphi(x)}\right)\frac{\sigma_{\x_0}((x+\delta,\infty),i_0)}{p_i\tilde{\psi}(x_0,i_0)} \\
&= \frac{1}{p_i\varphi^2(x)}\frac{\sigma_{\x_0}((x,\infty),i_0)}{\tilde{\psi}(x_0,i_0)}.
\end{align*}
Finally, we obtain by substituting and differentiating the following expression
\begin{align*}
f^+(x,i_0) &= \varphi^+(x)v(x,i_0) + \varphi(x) \frac{1}{p_i\varphi^2(x)}\frac{\sigma_{\x_0}((x,\infty),i_0)}{\tilde{\psi}(x_0,i_0)} + \varphi^+(x)\frac{1-\sigma_{\x_0}((x_0,\infty),i_0)}{\varphi(x_0)}, 
\end{align*}
which implies \eqref{rep11}. The expression \eqref{rep12} is proved similarly, we omit the details.
\end{proof}


\relax 
We can also characterize $r$-excessive functions in the following way (cf. 
\cite{BayZhang},
\cite{FitzsimmonsKuter}, 
and \cite{KaratzasYan}).

\begin{theorem}
\label{charc2}
A measurable function $h\colon\Gamma\to[0,\infty)$ is $r$-excessive for $\X$ if and only if
\begin{itemize}
\item[\rm(a)] the function $x\mapsto h(x,i)$, $x>0, i=1,2,\dots,n$,  is $r$-excessive for $X^\partial$,  
\item[\rm(b)] the gluing condition holds:  $$\displaystyle{{\bf D}{h}(\0):=\sum_{i=1}^n p_ih^+(\0,i)}\leq 0.$$ 
\end{itemize}
\end{theorem}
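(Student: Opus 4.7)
My plan is to prove the two implications separately, leveraging the Riesz-Martin machinery developed in Sections \ref{GR}--\ref{EX}.

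For necessity, the gluing condition (b) is exactly Theorem \ref{glue}, so only (a) needs attention. Fix a leg $i$ and a point $x>0$. Since $\X$ behaves as $X^\partial$ on each leg until the first hitting time $H_\0$, and $H_\0>0$ $\P_{(x,i)}$-a.s., one has, for every $t>0$,
\begin{equation*}
e^{-rt}\EE_x^{X^\partial}\!\bigl[h(X^\partial_t,i)\bigr]
=e^{-rt}\EE_{(x,i)}\!\bigl[h(\X_t)\indicator_{\{t<H_\0\}}\bigr]\leq e^{-rt}\EE_{(x,i)}\!\bigl[h(\X_t)\bigr]\leq h(x,i),
\end{equation*}
which yields the supermartingale inequality for $X^\partial$. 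To push the leftmost quantity up to $h(x,i)$ as $t\downarrow 0$, I would bound the complementary piece $e^{-rt}\EE_{(x,i)}[h(\X_t)\indicator_{\{H_\0\leq t\}}]$, via the strong Markov property at $H_\0$ and the $r$-excessiveness of $h$ for $\X$, by $h(\0)\EE_{(x,i)}[e^{-rH_\0}\indicator_{\{H_\0\leq t\}}]$, which vanishes as $t\downarrow 0$. Combined with $e^{-rt}\EE_{(x,i)}[h(\X_t)]\to h(x,i)$, this yields (a).

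For sufficiency, I would start from the classical Riesz representation of positive $r$-excessive functions of the one-dimensional diffusion $X^\partial$: hypothesis (a) gives, on each leg $i$,
\begin{equation*}
h(x,i)=A_i\,\varphi(x)+B_i\,\psi^\partial(x)+\int_{(0,\infty)}g^\partial_r(x,y)\,\mu_i(dy),\qquad x>0,
\end{equation*}
for constants $A_i,B_i\geq 0$ and a non-negative measure $\mu_i$ on $(0,\infty)$. Since $\psi^\partial(0)=0$ and $g_r^\partial(x,y)\to 0$ as $x\downarrow 0$, one has $\lim_{x\downarrow 0}h(x,i)=A_i$; the finiteness of $h^+(\0,i)$ implicit in (b) then forces $A_i=h(\0)=:\alpha$ for every $i$. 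Differentiating with respect to $S$ at $0+$ and using the normalizations \eqref{recall}--\eqref{norm} gives
\begin{equation*}
h^+(\0,i)=-\alpha\,c_r+B_i+\int_{(0,\infty)}\varphi(y)\,\mu_i(dy),
\end{equation*}
so (b) rearranges as
\begin{equation*}
\sigma_\0:=\alpha-\frac{1}{c_r}\sum_{i=1}^n p_i\Bigl(B_i+\int_{(0,\infty)}\varphi(y)\,\mu_i(dy)\Bigr)=-\frac{\mathbf{D}h(\0)}{c_r}\geq 0.
\end{equation*}
To conclude, I would exhibit $h$ explicitly as a sum of $r$-excessive functions of $\X$. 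Setting $\nu(dy,j):=p_j\mu_j(dy)$ and reading off the block structure of $\mathbf{g}_r$ from Theorem \ref{Thrm:GreenKernel1},
\begin{equation*}
\mathbf{G}_r\nu(x,i)=G_r^\partial\mu_i(x)+\frac{\varphi(x)}{c_r}\sum_j p_j\!\int_{(0,\infty)}\!\varphi(y)\,\mu_j(dy),
\end{equation*}
while the $r$-harmonic function \eqref{mahar2} with coefficients $a_i:=p_iB_i/c_r$ contributes the terms $B_i\psi^\partial(x)$ and $c_r^{-1}\varphi(x)\sum_k p_kB_k$. The residual $\sigma_\0\varphi(x)$ is $r$-excessive as a non-negative multiple of the minimal function $\mathbf{u}_r(\cdot;\0)$, so $h$ decomposes as a sum of $r$-excessive functions of $\X$ and is therefore itself $r$-excessive.

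The main obstacle I anticipate is invoking the Riesz decomposition for $X^\partial$ with the correct boundary matching at $\0$: one must deduce from the finiteness of $h^+(\0,i)$ that each leg-wise limit $\lim_{x\downarrow 0}h(x,i)$ coincides with the prescribed value $h(\0)$, and then compute $h^+(\0,i)$ explicitly in terms of the Riesz data in order to line up the excess mass $\sigma_\0$ with the atom at $\0$ in the Martin representation. Once this identification is in place, the coefficient matching against the Green kernel of $\X$ and the $r$-harmonic functions \eqref{mahar2} is a mechanical check.
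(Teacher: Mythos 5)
Your proposal is correct, but both halves run along genuinely different lines from the paper's proof. For the necessity of (a), the paper reads the claim off the Martin representation: starting from (\ref{fu1})--(\ref{v1}) it rewrites $h$ on leg $i$ in the form (\ref{exh}), i.e.\ as $\frac{c_r}{p_i}\int g_r^\partial(x,y)\mu(dy,i)+\varphi(x)$, and observes that each summand is $r$-excessive for $X^\partial$; you instead argue probabilistically, identifying $\EE_x^{X^\partial}[h(X^\partial_t,i)]$ with $\EE_{(x,i)}[h(\X_t)\indicator_{\{t<H_{\0}\}}]$ and killing the complementary piece via the strong Markov property at $H_{\0}$ together with $h(\0)<\infty$ and $\P_{(x,i)}(H_{\0}\leq t)\to 0$. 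Your route is more elementary and does not presuppose that $h$ already has the representation (\ref{ma}), which is a genuine advantage since that representation is precisely what the equivalence is meant to deliver. For sufficiency, the paper works with the derivative formulas (\ref{repar1})--(\ref{repar2}) and (\ref{rep01})--(\ref{rep02}) to argue that the candidate representing measure is a probability measure; you instead invoke the leg-wise Riesz--Martin decomposition $h(\cdot,i)=A_i\varphi+B_i\psi^\partial+\int g_r^\partial(\cdot,y)\mu_i(dy)$ and reassemble $h$ explicitly as a potential of $\nu(dy,j)=p_j\mu_j(dy)$ against $\mathbf{g}_r$, plus the $r$-harmonic function (\ref{mahar2}) with $a_i=p_iB_i/c_r$, plus the non-negative multiple $\sigma_{\0}\varphi$ of the minimal excessive function at $\0$; the coefficient bookkeeping checks out, and the conclusion that a sum of $r$-excessive functions is $r$-excessive is immediate. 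This decomposition is arguably more transparent than the paper's argument, at the cost of importing the classical one-dimensional Riesz representation as an external input (the paper effectively uses the same fact, phrased through (\ref{repar1})--(\ref{repar2})). Both your proof and the paper's share the same tacit hypothesis, which you correctly flag: condition (b) must be read as asserting that the one-sided scale derivatives $h^+(\0,i)$ exist and are finite, since this is what forces $A_i=h(\0)$ and the finiteness of $\int\varphi\,d\mu_i$; the paper is equally silent on this point, and its Theorem \ref{charc3} makes the needed regularity explicit.
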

\relax
\begin{proof}
Assume first  that $h$ is $r$-excessive for $\X$. Then (b) holds by (\ref{glue1}) in Theorem \ref{glue}. To verify (a), we recall from (\ref{fu1}) and (\ref{v1}) that on the leg $i$
\begin{align*}
&h(x,i)=\frac{c_r}{p_i}\varphi(x)v(x,i)+\varphi(x)\\
&\hskip1cm
=
\frac{c_r}{p_i} \left(\int_{(0,x]}\frac{\varphi(x)\psi^\partial(y)}{\varphi(y)}\,  \sigma(dy,i)+\psi^\partial(x)\sigma((x,\infty),i)\right) +\varphi(x).
\end{align*}
Using herein the explicit form of the Green function $g^\partial_r$ associated with $X^\partial$ as given in (\ref{gr0}) yields 
\begin{equation}\label{exh}
h(x,i)=\frac{c_r}{p_i}\int_0^\infty g_r^\partial(x,y)\, \mu(dy,i)+\varphi(x),
\end{equation}
where $\mu(dy,i):= \sigma(dy,i)/\varphi(y)$ is a finite measure on $\R_+$. Consequently, $h$ is $r$-excessive for $X^\partial$ since the first term on the right hand side of (\ref{exh}) is $r$-excessive by the Riesz representation and, clearly, also  the second term is such a one.

Assume next that (a) and (b) hold. We remark that (a) alone does not  imply the claim. Indeed, the function $h(x,i):=\psi^\partial(x) +1$ is continuous, strictly positive and $r$-excessive for $X^\partial$ but not $r$-excessive for $\X$. Indeed, if $h$ were $r$-excessive then the representing measure should put a non-negative mass at $\0$. But $h^+(\0,i)=1$ for very $i$ and, hence, from (\ref{rep02}), the mass is strictly negative, which contradicts the assumption of the $r$-excess.  
Since it is assumed that $x\mapsto h(x,i)$ is for every $i$ $r$-exessive for $X^\partial$ its representing probability measure is given  for every $i$ by
\begin{equation}\label{repar1}
\sigma^\partial((x,\infty],i)= 
C_1\lp h^+(x,i)\varphi(x)-\varphi^+(x)h(x,i)\rp,\qquad  x>x_o,
\end{equation}
and 
\begin{equation}\label{repar2}
\sigma^\partial(([0,x),i)= 
C_2\lp h(x,i) (\psi^\partial)^-(x,i)-h^-(x,i) \psi^\partial(x,i)\rp, \qquad 0<x<x_o,
\end{equation}
where $x_o>0$ is a point such that $h(x_o,i)=1$, and $C_1$ and $C_2$ are constants (which can be calculated explicitly and do not depend on $x$). 
Notice that (\ref{repar1}) and (\ref{repar2}) induce probability measures in fact for any $x_o>0$ since given an arbitray $x_o>0$ we may consider the representation of the function $x\mapsto h(x,i)/h(x_o,i)$. Consequently, the expression inside the parenthesis in (\ref{repar1}) is positive and decreasing as a function of $x>0$. Moreover, it is bounded by (b) and the assumptions on $\varphi$, cf. (\ref{norm}) and (\ref{reg00}).  We assume now without loss of generality that $h(\0)=1$. It is also assumed that $h$ has a representation as given on the right hand side of (\ref{ma}). The derivation of the formulas (\ref{rep01}) and  (\ref{rep02}) in Theorem \ref{rep0} is based only on the assumption that the representation (\ref{ma}) is valid. From the discussion above we conclude that the expression on the right hand side of  (\ref{rep01}) induces a measure. By the assumption (b) the mass at $\0$ is non-negative. Therefore, the function $h$ has the representation given on the right hand side of (\ref{ma}) with $\sigma$ as a probability measure. Consequently, $h$ is $r$-excessive for $\X$.  
\end{proof}

The proof of Theorem \ref{charc2} yields also the following characterization of excessive functions expressed solely in terms of $h$ and its derivatives. 

\relax

\begin{theorem}
\label{charc3}
A measurable function $h\colon\Gamma\to[0,\infty)$ is $r$-excessive for $\X$ if and only if
\begin{itemize}
\item[\rm(a)] for every $x>0$ and $i=1,2,\dots,n $ the derivatives $h^+(x,i)$ and $h^-(x,i)$ exist, 
\item[\rm(b)] for every $i=1,2,\dots,n$ the function
$$
 x\mapsto   h^+(x,i)\varphi(x)-\varphi^+(x)h(x,i),\quad x>0.
 $$
 is bounded, positive and decreasing,

\item[\rm(c)]  the gluing condition holds:   $$\displaystyle{D{h}(\0):=\sum_{i=1}^n p_ih^+(\0,i)}\leq 0.$$ 
\end{itemize}
\end{theorem}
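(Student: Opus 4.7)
The plan is to reduce Theorem \ref{charc3} to Theorem \ref{charc2} by proving that, on each leg $i$, the conjunction of (a) and (b) is equivalent to the $r$-excessiveness of $x \mapsto h(x,i)$ for the killed diffusion $X^\partial$. Since condition (c) of Theorem \ref{charc3} coincides verbatim with condition (b) of Theorem \ref{charc2}, the equivalence will then follow at once.

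For $(\Rightarrow)$, assume $h$ is $r$-excessive for $\X$. Theorem \ref{charc2}(a) immediately gives that $x \mapsto h(x,i)$ is $r$-excessive for $X^\partial$ on each leg. The existence of the one-sided $S$-derivatives $h^\pm(x,i)$ in (a) then follows from the classical fact that an $r$-excessive function for a regular linear diffusion decomposes on the interior of its state space as a Riesz potential plus an $r$-harmonic function, each of which admits one-sided $S$-derivatives at every interior point. For (b), I would invoke formula \eqref{repar1}: the representing measure $\sigma^\partial(\cdot,i)$ is finite, so its tail $x \mapsto \sigma^\partial((x,\infty],i)$ is automatically bounded, non-negative and decreasing, and by \eqref{repar1} this tail equals a positive constant multiple of the expression in (b).

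For $(\Leftarrow)$, suppose (a), (b), (c) hold. First I would define on each leg $i$ a finite non-negative measure $\nu_i$ on $(0,\infty)$ by prescribing its tail
\[
\nu_i((x,\infty)) \;:=\; C\bigl(h^+(x,i)\varphi(x) - \varphi^+(x)h(x,i)\bigr), \qquad x > 0,
\]
for an appropriate positive constant $C$; this is a well-defined finite measure precisely because of (b). Next, using the Wronskian identity \eqref{w0} and the explicit form of $g_r^\partial$ in \eqref{gr0}, I would reverse the differentiation performed in the proof of Theorem \ref{rep0} to recover a Riesz-type representation
\[
h(x,i) \;=\; \int_0^\infty g_r^\partial(x,y)\,\nu_i(dy) + A_i\,\varphi(x),
\]
where the harmonic coefficient $A_i \geq 0$ is identified from the behaviour at infinity through \eqref{p0}. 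Both summands are $r$-excessive for $X^\partial$, hence so is $h(\cdot,i)$; this is condition (a) of Theorem \ref{charc2}, and combined with (c) the latter theorem yields the $r$-excessiveness of $h$ for $\X$.

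The main obstacle lies in the reconstruction step for $(\Leftarrow)$: one must verify that the measure $\nu_i$ obtained from the tail formula really produces $h(\cdot,i)$ through the displayed Riesz representation, and that the harmonic coefficient $A_i$ is non-negative. Unlike in Theorem \ref{charc2}, where the gluing condition (b) directly forces the representing mass at $\mathbf{0}$ to be non-negative, here the non-negativity of $A_i$ must be extracted from the positivity statement in (b) together with the boundary data $\varphi^+(0+) = -c_r$ from \eqref{reg00} and $\varphi^+(+\infty) = 0$ from \eqref{p0}. The cautionary example $h(x,i) = \psi^\partial(x) + 1$ recalled after Theorem \ref{charc2} illustrates exactly why boundedness in (b) is indispensable: without it the Riesz coefficient of $\psi^\partial$ could be strictly positive, which is precisely what (b) rules out.
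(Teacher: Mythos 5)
Your overall strategy -- reduce to Theorem \ref{charc2} by showing that (a)+(b) characterize $r$-excessivity of $x\mapsto h(x,i)$ for $X^\partial$ on each leg, and note that (c) is verbatim condition (b) of that theorem -- is exactly what the paper intends (it gives no separate proof, asserting the result follows from the proof of Theorem \ref{charc2}), and your forward direction is fine. However, the reconstruction step in your $(\Leftarrow)$ direction has a genuine gap. The Riesz/Martin decomposition of an $r$-excessive function of $X^\partial$ on a leg is
$$
h(x,i)=\int_0^\infty g_r^\partial(x,y)\,\nu_i(dy)+a_i\,\psi^\partial(x)+b_i\,\varphi(x),
$$
with \emph{three} non-negative ingredients: $\psi^\partial$ is itself minimal $r$-excessive for $X^\partial$ (it is the Martin kernel at $+\infty$), so your ansatz $h=G_r^\partial\nu_i+A_i\varphi$ omits an admissible harmonic component. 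Your justification for dropping it -- that boundedness in (b) forces the $\psi^\partial$-coefficient to vanish -- is false. A direct computation using the Wronskian \eqref{w0} gives
$$
h^+(x,i)\varphi(x)-\varphi^+(x)h(x,i)=\int_{(x,\infty)}\varphi(y)\,\nu_i(dy)+a_i\,w_r ,
$$
so a strictly positive $a_i$ only shifts this function by the constant $a_i$ and leaves it bounded, positive and decreasing. Indeed $h=\psi^\partial+M\varphi$ with $M\geq 1/c_r$ satisfies (a), (b) (the expression equals $w_r=1$ identically) and (c), is genuinely $r$-excessive for $\X$ (it is a non-negative combination of the harmonic function in \eqref{mahar2} and $\varphi$), and has $\psi^\partial$-coefficient $1$; your representation cannot produce it. For the same reason your closing paragraph misreads the cautionary example: $h=\psi^\partial+1$ gives $h^+\varphi-\varphi^+h=1-\varphi^+(x)$, which \emph{is} bounded, positive and decreasing, so that example satisfies (a) and (b) and is excluded solely by the gluing condition (c), not by boundedness in (b).

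The repair is straightforward: set $a_i:=\lim_{x\to\infty}\bigl(h^+(x,i)\varphi(x)-\varphi^+(x)h(x,i)\bigr)$, which exists and is non-negative because the function in (b) is decreasing and positive; define $\nu_i$ by its tail as the difference between that function and $a_i$ (up to the normalizing constant); identify $b_i=h(\0)\geq 0$ from the boundary values $\psi^\partial(0)=0$, $\varphi(0)=1$ and $G_r^\partial\nu_i(0)=0$; and then verify the displayed three-term representation. All three summands are $r$-excessive for $X^\partial$, which gives condition (a) of Theorem \ref{charc2}, and (c) completes the argument as you indicate.
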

\section{Optimal stopping}
\label{O}
\label{GE}
\bigskip
Consider a continuous function $g\colon \Gamma\to [0,\infty)$.
We assume moreover that  $g$ is differentiable (with respect to the scale) on the interior of the edges,
and has the right derivatives at $0$ along the edges.
%
Let  $\cM$ denote the set of stopping times adapted to the filtration generated by $\X$.
Our optimal stopping problem consists of finding a function $V$, the value function,  and a stopping time  $\tau^*$, an optimal stopping time,  such that 
\begin{equation}\label{eq:osp}
V(\x)=\sup_{\tau\in\cM}\EE_{\x}\left(e^{-r\tau}g(\X(\tau))\right)=\EE_{\x}\left(e^{-r\tau^*}g(\X(\tau^*))\right).
\end{equation}
In order for the  value function to be finite, we assume the following condition
to hold:
\begin{equation}\label{finite}
\EE_{\x}\lp\sup_{t\geq 0}(e^{-rt}g(\X(t)))\rp<\infty.
\end{equation}
In (\ref{eq:osp}), when $\tau(\omega)=+\infty$ we define
\begin{equation}\label{finite1}
e^{-r\tau}g(\X(\tau)):=\limsup_{t\to\infty}e^{-rt}g(\X(t)).
\end{equation}  
An alternative criterium to check  the finiteness of the value function is given in the next proposition making use of  the characterizations of harmonic functions in section \ref{EX}.
		\begin{proposition}\label{fin00}
			Assume that there exists a positive $r$-harmonic function $H$ of $\X$ such that  $g/H$ is bounded. Then, the value function in \eqref{eq:osp} is finite.
		\end{proposition}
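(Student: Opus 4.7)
The plan is to dominate the value function pointwise by a constant multiple of $H$. By assumption there exists $C>0$ with $g(\x)\leq CH(\x)$ for every $\x\in\Gamma$, and the paragraph following \eqref{mahar2} already observes that the $r$-harmonicity of $H$ makes $M_t:=e^{-rt}H(\X_t)$ a nonnegative $\P_\x$-martingale under every initial law. Doob's martingale convergence theorem then delivers an almost sure limit $M_\infty:=\lim_{t\to\infty}M_t$ with $\EE_\x[M_\infty]\leq H(\x)$.

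Next I would fix $\tau\in\cM$ and split on the events $\{\tau<\infty\}$ and $\{\tau=\infty\}$. On the first, $e^{-r\tau}g(\X_\tau)\leq CM_\tau$ follows directly from $g\leq CH$. On the second, the convention \eqref{finite1} together with the almost sure (not merely limsup) convergence of $M_t$ gives
$$
e^{-r\tau}g(\X_\tau)=\limsup_{t\to\infty}e^{-rt}g(\X_t)\leq C\lim_{t\to\infty}e^{-rt}H(\X_t)=CM_\infty.
$$
Optional sampling applied to the bounded stopping times $\tau\wedge n$ yields $\EE_\x[M_{\tau\wedge n}]=H(\x)$, and Fatou's lemma as $n\to\infty$, combined with the continuity of the paths of $\X$ on $\{\tau<\infty\}$, produces
$$
\EE_\x\!\left[M_\tau\indicator_{\{\tau<\infty\}}+M_\infty\indicator_{\{\tau=\infty\}}\right]\leq H(\x).
$$
Combining the two cases bounds $\EE_\x[e^{-r\tau}g(\X_\tau)]\leq CH(\x)$, and taking the supremum over $\tau\in\cM$ yields $V(\x)\leq CH(\x)<\infty$.

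The only delicate point is the behaviour on $\{\tau=\infty\}$ under the limsup convention \eqref{finite1}; nonnegativity of $M$ promotes the limsup to a genuine limit, so the pathwise inequality $\limsup_t e^{-rt}g(\X_t)\leq CM_\infty$ is valid and the subsequent Fatou step gives the desired bound. Beyond this observation the argument is just standard optional sampling for a nonnegative martingale, and I do not anticipate a substantive obstacle.
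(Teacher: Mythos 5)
Your argument is correct and follows essentially the same route as the paper: bound $g\leq CH$, use the positive martingale $e^{-rt}H(\X_t)$, and apply optional sampling to get $\EE_{\x}\left(e^{-r\tau}H(\X_\tau)\right)\leq H(\x)$. You are in fact slightly more careful than the paper, which asserts this as an equality for an arbitrary stopping time, whereas your Fatou/optional-sampling argument correctly yields only the inequality --- which is all that is needed.
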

	\relax
		\begin{proof} We use the argument presented, e.g., in \cite{CCMS} Proposition 3.3. p. 2570. For this, recall that $\{e^{-rt}H(\X(t))\}_{t\geq 0}$ is a positive martingale. Letting $C$ be a bound of $g/H$  then it holds  for an arbitrary stopping time $\tau$ 	
			\begin{align*}
				\EE_{\x}\left(e^{-r\tau}g(\X_{\tau})\right)  &=	\EE_{\x}
				\left(
				e^{-r\tau}{g(\X_{\tau})\over H(\X_{\tau})}H(\X_{\tau})
				\right)\\
				&\leq C \EE_{\x}\left(e^{-r\tau}H(\X_{\tau})\right)\\	
&=C H(\x)<\infty,
			\end{align*}
			where in case $\tau(\omega)=\infty$  
$$
e^{-r\tau}H(\X_{\tau}):=\lim_{t\to\infty} e^{-rt}H(\X_{t}).
$$
\end{proof}

\relax

\begin{theorem}
\label{st1}
The value function $V$ is the smallest $r$-excessive majorant of the reward function $g$. An optimal stopping  time of the problem (\ref{eq:osp}) is given by
$$
\tau^*=\inf\{t\geq 0\colon \X(t)\in \Sigma\},
$$ 
where 
\begin{equation}
\label{stop1}
\Sigma:=\{\x\in\Gamma\colon g(\x)=V(\x)\}.
\end{equation}
\end{theorem}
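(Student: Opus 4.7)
The plan is to follow the classical Dynkin--Shiryaev theory of optimal stopping for strong Markov processes, adapted to the present setting. The diffusion spider $\X$ is a Feller process with continuous sample paths and $g$ is continuous, so all the standard hypotheses apply once we invoke the integrability bound (\ref{finite}) to guarantee finiteness of $V$ and the admissibility of the optional stopping arguments below.

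First I would show that $V$ is an $r$-excessive majorant of $g$. That $V\geq g$ is immediate by taking $\tau=0$ in (\ref{eq:osp}). For superharmonicity, fix $t>0$ and an $\X$-stopping time $\sigma$. By the strong Markov property
$$
\EE_{\x}\bigl(e^{-r(t+\sigma\circ\theta_t)}g(\X(t+\sigma\circ\theta_t))\bigr)
=\EE_{\x}\bigl(e^{-rt}\,\EE_{\X(t)}(e^{-r\sigma}g(\X(\sigma)))\bigr),
$$
so taking the supremum over $\sigma$ inside the expectation (monotone convergence) yields $\EE_{\x}(e^{-rt}V(\X(t)))\leq V(\x)$. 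The right-continuity $\lim_{t\downarrow 0}\EE_{\x}(e^{-rt}V(\X(t)))=V(\x)$ required for $r$-excessivity follows from the Feller property, continuity of $g$, and path continuity of $\X$, by approximating $V$ from below by rewards based on finite sets of deterministic times.

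Second I would prove minimality. If $f\geq g$ is any $r$-excessive function, then $(e^{-rt}f(\X_t))_{t\geq 0}$ is a nonnegative right-continuous supermartingale under each $\P_{\x}$. Optional stopping, justified by (\ref{finite}) together with Fatou's lemma and the convention (\ref{finite1}), gives
$$
\EE_{\x}\bigl(e^{-r\tau}g(\X(\tau))\bigr)\leq \EE_{\x}\bigl(e^{-r\tau}f(\X(\tau))\bigr)\leq f(\x)
$$
for every $\tau\in\cM$. Taking the supremum over $\tau$ delivers $V\leq f$, so $V$ is the smallest $r$-excessive majorant of $g$.

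Third I would verify optimality of $\tau^*$. The set $\Sigma$ is closed (both $V$ and $g$ are lower semicontinuous, and in fact $V$ is continuous under the standing assumptions), so $\tau^*$ is an $\X$-stopping time. The key structural fact is that $(e^{-r(t\wedge\tau^*)}V(\X(t\wedge\tau^*)))_{t\geq 0}$ is a martingale, not merely a supermartingale: if it were a strict supermartingale on a set of positive probability contained in $\{t<\tau^*\}$, then the function $\tilde V(\x):=\EE_{\x}(e^{-r\tau^*}V(\X(\tau^*)))$ would be an $r$-excessive majorant of $g$ strictly smaller than $V$ at some point, contradicting the minimality proved above. Letting $t\to\infty$ in the identity $V(\x)=\EE_{\x}(e^{-r(t\wedge\tau^*)}V(\X(t\wedge\tau^*)))$ and using continuity of the paths together with $V(\X(\tau^*))=g(\X(\tau^*))$ on $\{\tau^*<\infty\}$ (definition of $\Sigma$), we obtain $V(\x)=\EE_{\x}(e^{-r\tau^*}g(\X(\tau^*)))$, so $\tau^*$ attains the supremum.

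The main obstacle is the martingale assertion on $[0,\tau^*]$, i.e.\ $r$-harmonicity of $V$ on the continuation set $\Gamma\setminus\Sigma$. The minimality argument sketched above is the cleanest way to extract this; the only technical care needed is to ensure that $\tilde V$ is itself $r$-excessive, which reduces to another application of the strong Markov property at $\tau^*$ together with the Feller property and \eqref{finite} to secure the necessary uniform integrability.
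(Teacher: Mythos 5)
Your proposal does not follow the paper's route: the paper's proof of Theorem \ref{st1} is a two-line verification that the hypotheses of the classical Dynkin--Shiryaev theory are met (the state space $\Gamma$ is semi-compact, $\X$ is strong Markov with continuous paths, $g$ is continuous and satisfies (\ref{finite})), followed by a citation of Theorem 1, p.~124 and Theorem 3, p.~127 of Shiryaev's \emph{Optimal Stopping Rules}. You instead attempt to reprove those cited theorems from scratch. The first half of your argument --- that $V$ is an $r$-excessive majorant of $g$ and is the smallest one --- is essentially correct and standard, modulo one point you gloss over: interchanging $\sup_\sigma$ with $\EE_{\x}$ in the supermartingale step is not plain monotone convergence but requires the upward-directedness (lattice property) of the family $\{\EE_{\cdot}(e^{-r\sigma}g(\X_\sigma))\}_\sigma$, so that the supremum is attained along an increasing sequence. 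That is a known and repairable technicality.

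The genuine gap is in the optimality of $\tau^*$. Your contradiction argument runs: if the stopped process were a strict supermartingale before $\tau^*$, then $\tilde V(\x):=\EE_{\x}(e^{-r\tau^*}V(\X_{\tau^*}))$ would be an $r$-excessive \emph{majorant of $g$} strictly below $V$, contradicting minimality. But the assertion that $\tilde V\geq g$ on the continuation region $\Gamma\setminus\Sigma$ is exactly the statement that stopping at $\tau^*$ is at least as good as stopping immediately --- i.e., it is (essentially) the optimality claim itself. On $\Sigma$ one has $\tilde V=V=g$ trivially, but off $\Sigma$ all that is immediate is $\tilde V\leq V$, which goes the wrong way; nothing in your argument rules out $\tilde V<g$ somewhere in the continuation region, and if that happened $\tilde V$ would not be a competitor in the minimality comparison and no contradiction would arise. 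You flag the $r$-excessivity of $\tilde V$ as the "only technical care needed," but that is the easy part (it is a réduite); the majorant property is the hard part. The standard repair is the $\varepsilon$-approximation of Shiryaev's Theorem 3: work with $\tau_\varepsilon=\inf\{t\geq 0: V(\X_t)\leq g(\X_t)+\varepsilon\}$, establish the martingale property up to $\tau_\varepsilon$ from the iterative construction of $V$, deduce $V(\x)\leq\EE_{\x}(e^{-r\tau_\varepsilon}g(\X_{\tau_\varepsilon}))+\varepsilon$, and let $\varepsilon\downarrow 0$ using continuity of $g$, path continuity, and (\ref{finite}). Alternatively, do what the paper does and verify the hypotheses of the cited theorems rather than reproving them.
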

\relax
\begin{proof}
Observe that
the state space $\Gamma$ is a locally compact Hausdorff space with a
countable basis (i.e. semi compact), the process has continuous trajectories 
and is strong Markov. Furthermore the function $g$ is assumed to be continuous
and to satisfy the  condition \eqref{finite}. Then the first claim  follows from Theorem 1 p. 124 in \cite{Shiryaev} and the second one from Theorem 3 p. 127 in ibid.  
\end{proof}

The set $\Sigma$ introduced in (\ref{stop1})  is called the stopping region and its complement $C:=\Sigma^c$ the continuation region of the OSP (\ref{eq:osp}). 


The next result is  essentially Corollary on p. 124 in \cite{Shiryaev}, and can be used to verify that a candidate value function is indeed the true value (see, e.g.,  \cite{AlvarezSalminen}, \cite{MordeckiSalminen1}, and  \cite{MordeckiSalminen2}).  We apply  this, in particular, in our first example below. Recall that the underlying diffusion is assumed to be recurrent and, hence, the first hitting time $H_A$ defined below is a.s. finite.
\begin{theorem}\label{apu}
Let $A\subset \Gamma$ be a (non-empty) Borel subset of $\Gamma$ and 
$$
H_A:=\inf\{t\geq 0:\X_t\in A\}.
$$ 
Assume that the function 
$$
\hat{V}(\x):=\mathbb{E}_\x\left[\textrm{e}^{-rH_A}g(\X_{H_A})\right]
$$
 is $r$-excessive and dominates $g$. Then, $\hat{V}$ coincides with the value function of OSP (\ref{eq:osp}) and $H_A$ is an optimal stopping time.
\end{theorem}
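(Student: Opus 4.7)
The plan is to establish the two inequalities $\hat V\le V$ and $V\le \hat V$ separately from Theorem \ref{st1}, with the optimality of $H_A$ falling out as a by-product once the equality $V=\hat V$ is in hand.

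First I would note that $H_A$ is a stopping time for the (suitably augmented) filtration generated by $\X$: since $\X$ has continuous paths on the locally compact Hausdorff space $\Gamma$ and $A$ is Borel, $H_A$ is the debut of a Borel set under a strong Markov process with continuous trajectories and hence is a stopping time. The remark preceding the theorem explicitly records that $H_A$ is $\P_\x$-a.s.\ finite by recurrence, so $\hat V$ is unambiguously defined without needing to invoke the convention \eqref{finite1}. Consequently $H_A\in\cM$, and the very definition of the value function gives
$$
V(\x)=\sup_{\tau\in\cM}\EE_\x\bigl(e^{-r\tau}g(\X_\tau)\bigr)\;\ge\;\EE_\x\bigl(e^{-rH_A}g(\X_{H_A})\bigr)=\hat V(\x).
$$

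For the opposite inequality I would invoke Theorem \ref{st1}, according to which $V$ is the \emph{smallest} $r$-excessive majorant of $g$. By the standing assumption of the present statement, $\hat V$ is itself $r$-excessive and pointwise dominates $g$, whence $V\le\hat V$ by minimality.

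Combining the two estimates yields $V=\hat V$, and the preceding display becomes an equality at $\tau=H_A$, so $H_A$ realises the supremum in \eqref{eq:osp} and is therefore an optimal stopping time. The argument is essentially a one-line application of Theorem \ref{st1}; the only point requiring any care is the admissibility of $H_A$ as an a.s.\ finite stopping time, which is standard given the path-continuity of $\X$ and the recurrence of the underlying diffusion.
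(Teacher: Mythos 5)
Your proof is correct and is essentially the argument the paper relies on: the paper gives no proof of its own here but defers to the Corollary on p.~124 of Shiryaev, whose content is exactly your two-inequality argument ($\hat V\le V$ since $H_A\in\cM$, and $V\le\hat V$ by the minimality of $V$ among $r$-excessive majorants from Theorem \ref{st1}). Your additional care about $H_A$ being an a.s.\ finite admissible stopping time matches the remark the paper places just before the statement.
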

\noindent
We have the following sufficient condition for $\0$ to belong to the continuation region. 

\begin{proposition}\label{COND}
If 
\begin{equation}\label{COND1}
{\bf D}{g}(\0):=\sum_{i=1}^np_i g^+(\0,i)>0,
\end{equation}
then $\0\in C$.
\end{proposition}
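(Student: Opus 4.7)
The plan is to argue by contradiction: suppose $\0\in\Sigma$, i.e.\ $V(\0)=g(\0)$, and derive an incompatibility between the gluing condition for the $r$-excessive majorant $V$ (Theorem~\ref{glue}) and the hypothesis ${\bf D}g(\0)>0$.

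First, since $V\geq g$ on $\Gamma$ and $V(\0)=g(\0)$, for every leg $i$ and every $\delta>0$
\[
\frac{V(\delta,i)-V(\0)}{S(\delta)-S(0)}\;\geq\;\frac{g(\delta,i)-g(\0)}{S(\delta)-S(0)}.
\]
By Theorem~\ref{glue} the one-sided derivative $V^+(\0,i)$ exists, and by the assumptions on $g$ the derivative $g^+(\0,i)$ exists too, so passing to the limit $\delta\downarrow 0$ gives $V^+(\0,i)\geq g^+(\0,i)$. Multiplying by $p_i$ and summing yields
\[
{\bf D}V(\0)\;=\;\sum_{i=1}^n p_i V^+(\0,i)\;\geq\;\sum_{i=1}^n p_i g^+(\0,i)\;=\;{\bf D}g(\0)\;>\;0.
\]

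Second, I would invoke the gluing inequality of Theorem~\ref{glue} to contradict this. The theorem as stated assumes $f(\0)=1$, so I need to split into two cases according to the value of $V(\0)$. If $V(\0)=g(\0)>0$, then $V$ is a non-trivial $r$-excessive function which is finite at $\0$ (finiteness of $V$ is guaranteed by \eqref{finite}), and applying Theorem~\ref{glue} to the normalized function $V/V(\0)$ yields ${\bf D}(V/V(\0))(\0)\leq 0$, hence ${\bf D}V(\0)\leq 0$ by linearity of the one-sided derivative --- contradicting the strict inequality above. If instead $V(\0)=g(\0)=0$, then by the regularity remark just before Theorem~\ref{glue} (every point is hit, so an excessive function vanishing at one point vanishes identically) we get $V\equiv 0$, hence $g\equiv 0$ on $\Gamma$; but then $g^+(\0,i)=0$ for all $i$, so ${\bf D}g(\0)=0$, again contradicting the hypothesis.

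The main obstacle is the bookkeeping step of showing that the one-sided derivative of the excessive majorant dominates the one-sided derivative of the reward at $\0$; this only requires monotonicity of the difference quotient and the existence of both limits, which is guaranteed by Theorem~\ref{glue} for $V$ and by the standing assumption on $g$. The rest is a direct application of the gluing inequality (or the triviality alternative) for $r$-excessive functions of $\X$.
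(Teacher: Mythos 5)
Your proposal is correct and follows essentially the same route as the paper: assume $\0\in\Sigma$ so that $V(\0)=g(\0)$, use $V\geq g$ to deduce $V^+(\0,i)\geq g^+(\0,i)$ on each leg and hence ${\bf D}V(\0)\geq {\bf D}g(\0)>0$, and contradict the gluing inequality ${\bf D}V(\0)\leq 0$ for the $r$-excessive value function. The only cosmetic difference is that the paper invokes Theorem~\ref{charc2} (which states the gluing condition without the normalization $h(\0)=1$), so your extra normalization and triviality bookkeeping, while harmless and careful, is not needed.
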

\relax
\begin{proof} Notice that ${\bf D}{g}(\0)$ is well defined by the assumptions on $g$. Suppose that (\ref{COND1}) holds.  If $\0\in\Sigma$ then, from (\ref{stop1}),  $g(0)=V(0)$. Since $V$ is $r$-excessive we have ${\bf D}{V}(\0)\leq 0$ by Theorem \ref{charc2}. But since $V(\x)\geq g(\x)$ for all $\x\in\Gamma$ it follows that 
$0<{\bf D}{g}(\0)\leq {\bf D}{V}(\0)\leq 0$ which is a contradiction. This concludes the proof of the proposition.
\end{proof}

\relax

We proceed now to characterize the stopping set as a unique solution to an integral equation. It is seen that these results can be derived by modifying/converting the approach presented in \cite{CCMS} for multidimensional diffusions to the present case with diffusion spiders. The first result is a verification theorem presenting the key integral equation for the stopping region and the second one is for the uniqueness of the solution of this equation. 
For this aim,   recall the notation, cf. (\ref{green11}) and (\ref{gk1}),
\begin{align}
\label{green00}
\nonumber
 \mathbf{G}_rf(x,i)&:=\EE_{(x,i)}\left[\int_0^{\infty}e^{-rt}\, f(\X_t)dt\right]\\
&= \sum_{k=1}^n \int_{0}^{+\infty}\g_r((x,i),(y,k))\,f(y,k)\,p_k\, m(dy),
\end{align}
with a measurable and bounded $f:\Gamma\mapsto \R$.  In fact, if $f$ is "only" locally bounded and satisfies
\begin{equation}
\label{ma3011}
\sum_{i=1}^n\int_0^\infty\varphi(y)|f(y,i)|\, p_i\,m(dy) <\infty
\end{equation} 
then we still have for all $(x,i)\in\Gamma$
\begin{equation*}
 |\mathbf{G}_rf(x,i)|<\infty.
\end{equation*}

\begin{theorem} \label{general} 
Assume there exist a locally bounded and measurable function $f:\Gamma\mapsto \R$ and a constant $\Delta_\0$ (both depending on the reward function $g$) such that (\ref{ma3011}) holds 
and  $g$ can be written  for all $(x,i)\in\Gamma$ as
\begin{equation}
\label{ma3}
    g(x,i) = \mathbf{G}_rf(x,i)
 +     \g_r((x,i),\0)\,\Delta_\0.
\end{equation}
Assume also that there exists a (non-empty) closed  Borel subset $O$ of $\Gamma$ such that 
\begin{description}
  	\item[(a)] if $(x,i)\in O$ then $f(x,i)\geq 0$,
	\item[(b)]  if $\Delta_\0< 0$ then $\0\in O^c:={\Gamma \setminus O}$,
\end{description}
 and that the function 
 ${\tilde V}\colon\Gamma \to \R$ defined by
\begin{equation}
\label{rie1}
\tilde V((x,i)):= \mathbf{G}_r({\bf 1}_Of)(x,i)
   +  \g_r((x,i),\0)\,{\bf 1}_O(\0)\,\Delta_\0
\end{equation}
satisfies the following two conditions:
\begin{description}
	\item[(1)] $\tilde V((x,i))\geq g((x,i))$ for all $(x,i)\in O^c$,
\item[(2)] $\tilde V((x,i))=g((x,i))$ for all $(x,i) \in \partial O$,  or, equivalently, for all $(x,i) \in \partial O$  
\begin{equation}\label{b0}
 g(x,i)-\tilde{V}(x,i)=\mathbf{G}_r({\bf 1}_{O^c}f)(x,i)
 + \g_r((x,i),\0)\,{\bf 1}_{O^c}(\0)\,\Delta_\0 =0. 
\end{equation}
\end{description}
Then, in the problem (\ref{eq:osp}) the stopping set $\Sigma=O$, 
$\tau^*=H_{O}:=\inf\{t\geq 0:\X_t\in O\}$ is an optimal stopping time
and the value function  $V=\tilde V$.  
\end{theorem}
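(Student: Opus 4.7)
The plan is to apply Theorem \ref{apu} to $\tilde V$ with $A=O$. Three points are to be verified: (i) $\tilde V$ is $r$-excessive, (ii) $\tilde V\ge g$ everywhere, and (iii) $\tilde V(\x)=\EE_\x[e^{-rH_O}g(\X_{H_O})]$. For (i), I would rewrite (\ref{rie1}) as the Green potential $\tilde V=\G_r(\mu_O)$ of the measure
\begin{equation*}
\mu_O(dy,k):=\mathbf{1}_O(y,k)\,f(y,k)\,p_k\,m(dy)+\mathbf{1}_O(\0)\,\Delta_\0\,\delta_\0(dy,k).
\end{equation*}
Assumption (a) gives $f\mathbf{1}_O\ge 0$, and assumption (b) forces $\mathbf{1}_O(\0)\Delta_\0\ge 0$ (automatic when $\Delta_\0\ge 0$, and trivial when $\Delta_\0<0$ since then $\0\notin O$). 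Hence $\mu_O\ge 0$, and by the Riesz decomposition of $r$-excessive functions its Green potential $\tilde V$ is $r$-excessive.

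For (ii), the inequality $\tilde V\ge g$ on $O^c$ is precisely assumption (1). On $O$ I would establish the stronger equality $\tilde V=g$. Define
\begin{equation*}
h(x,i):=g(x,i)-\tilde V(x,i)=\G_r(\mathbf{1}_{O^c}f)(x,i)+\g_r((x,i),\0)\,\mathbf{1}_{O^c}(\0)\,\Delta_\0,
\end{equation*}
the Green potential of the (signed) measure $\mu-\mu_O$, which charges only $O^c$. Consequently $h$ is $r$-harmonic on the interior of $O$, and by assumption (2), $h\equiv 0$ on $\partial O$. Optional sampling applied to the stopped process $(e^{-r(t\wedge H_{\partial O})}h(\X_{t\wedge H_{\partial O}}))$, started from an interior point of $O$, then gives $h(x,i)=\EE_{(x,i)}[e^{-rH_{\partial O}}h(\X_{H_{\partial O}})]=0$, so $\tilde V=g$ throughout $O$.

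For (iii), since $\mu_O$ is supported on $O$ and $\X_s\in O^c$ for $s<H_O$ by continuity of paths and closedness of $O$, the Dynkin-type identity for Green potentials yields, for $(x,i)\in O^c$,
\begin{equation*}
\tilde V(x,i)=\EE_{(x,i)}[e^{-rH_O}\tilde V(\X_{H_O})]=\EE_{(x,i)}[e^{-rH_O}g(\X_{H_O})],
\end{equation*}
where step (ii) is used in the second equality together with $\X_{H_O}\in O$ a.s. On $O$ the identity is immediate from $H_O=0$ and $\tilde V=g$. Theorem \ref{apu} then gives $V=\tilde V$ and that $H_O$ is optimal; the identification $\Sigma=O$ follows from $V=g$ on $O$ combined with (\ref{stop1}).

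The step I expect to be the main obstacle is the optional sampling argument in (ii). It requires $H_{\partial O}<\infty$ almost surely from interior points of $O$ (a mild accessibility condition, automatic for the recurrent spider when $O$ is regular enough) and uniform integrability of the stopped martingale. The latter is typically handled via an exhaustion of the interior of $O$ by compact subsets, using the bound $|h|\le g+\tilde V$, the supermartingale property of $(e^{-rt}\tilde V(\X_t))$ from (i), and the integrability condition (\ref{finite}). A minor subtlety arises when $\0\in\partial O$, where consistency of $h\equiv 0$ at the vertex with the gluing condition of Theorem \ref{charc2} must be checked; this is automatic since $h$ is itself a Green potential.
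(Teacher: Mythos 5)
Your proposal is correct in substance and follows the paper's overall strategy: verify the hypotheses of Theorem \ref{apu} by (i) reading $\tilde V$ as the Green potential of a positive measure to get $r$-excessivity, (ii) showing $\tilde V=g$ on $O$ and $\tilde V\geq g$ on $O^c$, and (iii) establishing $\tilde V(\x)=\EE_\x[e^{-rH_O}g(\X_{H_O})]$. The one place you diverge is in how (ii) and (iii) are justified. You invoke $r$-harmonicity of $h=g-\tilde V$ on the interior of $O$ plus optional sampling of the stopped martingale $e^{-r(t\wedge H_{\partial O})}h(\X_{t\wedge H_{\partial O}})$, and you correctly flag the resulting uniform-integrability and accessibility issues as the main obstacle. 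The paper avoids that obstacle entirely by using the unconditional strong Markov identity for the resolvent (Lemma \ref{apu1}): for any stopping time $\tau$ and any $f$ satisfying (\ref{ma3011}),
\begin{equation*}
\mathbf{G}_rf(x,i)= \mathbb{E}_{(x,i)}\Bigl[ e^{-r\tau}\mathbf{G}_rf(\X_\tau){\bf 1}_{\{\tau<\infty\}} + \int_0^\tau e^{-rt}f(\X_t)\,dt \Bigr],
\end{equation*}
which is a one-line consequence of the strong Markov property and Fubini, with no martingale convergence needed. Applying this to ${\bf 1}_{O^c}f$ with $\tau=H_{O^c}$ (the time integral vanishes since $\X_t\in O$ up to $H_{O^c}$, and recurrence gives $H_{O^c}<\infty$) and then using assumption (2) at $\X_{H_{O^c}}\in\partial O$ yields $g=\tilde V$ on $O$ directly; the same identity with $\tau=H_O$, combined with the $r$-harmonicity of $\g_r(\cdot,\0)$ on the legs when $\0\in O$, gives (iii). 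So your argument can be completed, but the cleaner route is to replace the optional-sampling step by this resolvent identity, which also disposes of your worry about the vertex $\0\in\partial O$ without further checking. Your final remark that $\Sigma=O$ follows from $V=g$ on $O$ and (\ref{stop1}) matches the level of detail in the paper on that point.
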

\noindent
 The following general result is needed in the proof of Theorem \ref{general}.
\begin{lemma}
\label{apu1}
Let $f:\Gamma\mapsto \R$ be  a locally bounded and  measurable function satisfying (\ref{ma3011}) and $\tau$ a stopping time in the filtration generated by $\X$. Then
\begin{equation}
\label{Fyy1}
\mathbf{G}_rf(x,i)= \mathbb{E}_{(x,i)}\left[ e^{-r\tau}\mathbf{G}_rf(\X_\tau){\bf 1}_{\{\tau<\infty\}} + \int_0^\tau e^{-rt}f(\X_t)dt \right].
\end{equation}
\end{lemma}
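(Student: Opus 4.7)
The plan is to invoke the strong Markov property of $\X$ at the stopping time $\tau$, after splitting the resolvent integral at $\tau$ and applying a time-shift in the tail.

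Firstly, I would verify that the splitting is legitimate in $L^1(\mathbb{P}_{(x,i)})$. By Tonelli applied to the jointly measurable map $(\omega,t)\mapsto e^{-rt}|f(\X_t(\omega))|$,
\begin{equation*}
\mathbb{E}_{(x,i)}\!\int_0^\infty e^{-rt}|f(\X_t)|\,dt = \mathbf{G}_r|f|(x,i) = \sum_{k=1}^n\int_0^\infty \g_r((x,i),(y,k))\,|f(y,k)|\,p_k\,m(dy),
\end{equation*}
which is finite under hypothesis \eqref{ma3011} by the same argument that gave $|\mathbf{G}_r f(x,i)|<\infty$ after the statement of the lemma. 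Hence
\begin{equation*}
\int_0^\infty e^{-rt}f(\X_t)\,dt = \int_0^\tau e^{-rt}f(\X_t)\,dt + \mathbf{1}_{\{\tau<\infty\}}\int_\tau^\infty e^{-rt}f(\X_t)\,dt
\end{equation*}
is a $\mathbb{P}_{(x,i)}$-a.s. decomposition of an integrable random variable (the second piece being $0$ on $\{\tau=\infty\}$), and taking expectations is unproblematic.

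Secondly, on $\{\tau<\infty\}$ I would perform the substitution $u=t-\tau$ to rewrite the tail as
\begin{equation*}
\int_\tau^\infty e^{-rt}f(\X_t)\,dt = e^{-r\tau}\int_0^\infty e^{-ru}f(\X_{\tau+u})\,du.
\end{equation*}
By the strong Markov property of $\X$ (which holds since $\X$ is a strong Markov, Feller process with continuous paths, as recorded in Section \ref{S}), conditioning on $\mathcal{F}_\tau$ on the event $\{\tau<\infty\}$ gives
\begin{equation*}
\mathbb{E}_{(x,i)}\!\left[\int_0^\infty e^{-ru}f(\X_{\tau+u})\,du\,\bigg|\,\mathcal{F}_\tau\right] = \mathbf{G}_r f(\X_\tau)\quad \mathbb{P}_{(x,i)}\text{-a.s. on }\{\tau<\infty\}.
\end{equation*}
Multiplying by the $\mathcal{F}_\tau$-measurable factor $e^{-r\tau}\mathbf{1}_{\{\tau<\infty\}}$ and taking expectations yields
\begin{equation*}
\mathbb{E}_{(x,i)}\!\left[\mathbf{1}_{\{\tau<\infty\}}\int_\tau^\infty e^{-rt}f(\X_t)\,dt\right] = \mathbb{E}_{(x,i)}\!\left[e^{-r\tau}\mathbf{G}_r f(\X_\tau)\mathbf{1}_{\{\tau<\infty\}}\right].
\end{equation*}

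Combining this with the expected value of the pre-$\tau$ piece gives \eqref{Fyy1}. The only delicate point is the integrability justification for Fubini and for conditioning under the strong Markov property, and both are handled uniformly by controlling $\mathbf{G}_r|f|$ via \eqref{ma3011}; the decomposition of $f$ into positive and negative parts (or the dominated use of $|f|$) removes any ambiguity about signs, so no further verification is needed.
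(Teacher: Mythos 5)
Your argument is correct and is essentially the paper's own proof: the paper likewise splits the resolvent integral at $\tau$ and applies the strong Markov property to identify the tail with $e^{-r\tau}\mathbf{G}_r f(\X_\tau)\mathbf{1}_{\{\tau<\infty\}}$. You simply spell out the Tonelli/integrability justification and the conditioning on $\mathcal{F}_\tau$ that the paper leaves implicit.
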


\begin{proof}
The claim follows immediately from the strong Markov property. Indeed,
\begin{align}
\label{Dyy1}
\mathbf{G}_rf(x,i)
&= \mathbb{E}_{(x,i)}\Big[\left( \int_\tau^{\infty}e^{-rt}\, f(\X_t)dt + \int_0^\tau e^{-rt} f(\X_t)dt \right){\bf 1}_{\{\tau<\infty\}}\cr
&\hskip6cm+\int_0^{\infty}e^{-rt}\, f(\X_t)dt\,{\bf 1}_{\{\tau=\infty\}}\Big]\cr
& = \mathbb{E}_{(x,i)}\left[ e^{-r\tau} \mathbf{G}_rf(\X_\tau){\bf 1}_{\{\tau<\infty\}}+ \int_0^\tau e^{-rt} f(\X_t)dt \right]. 
\end{align}
\end{proof}
\noindent
{\sl Proof of Theorem \ref{general}.}
The proof consists of checking that the function  $\tilde{V}$ fulfills the conditions in Theorem  \ref{apu}. Firstly, notice that $ \tilde{V}$ is $r$-excessive since it is a linear combination of the Green kernel at $\0$ and an integral of the Green kernel with respect to a positive measure. Since, by assumption {\bf (1)},  $\tilde{V}$ dominates $g$ on $O^c$ it remains to show that 
for all  $(x,i) \in \Gamma$
\begin{equation}
\label{claim}
\tilde V(x,i)=\mathbb{E}_{(x,i)}\left[ e^{-rH_{O}} g(\X_{H_{O}}) \right].
\end{equation} 
Recall that $\X$ is recurrent and, hence, $H_{O}<\infty$ a.s. for all starting states. Moreover,  $\X_{H_{O}}\in \partial O\subseteq O$ since $O$ is assumed to be closed and $\X$ has continuous sample paths. Suppose now that $(x,i) \in O^c$ and $\0\in O^c$. Then, using (\ref{Fyy1}), 
\begin{align*}
\tilde V(x,i)=\mathbf{G}_r({\bf 1}_Of)(x,i)&= \mathbb{E}_{(x,i)}\left[ e^{-rH_{O}}\mathbf{G}_r({\bf 1}_Of)(\X_{H_{O}}) + \int_0^{H_{O}} e^{-rH_{O}}({\bf 1}_Of)(\X_t)dt \right]\\
&= \mathbb{E}_{(x,i)}\left[ e^{-rH_{O}}\mathbf{G}_r({\bf 1}_Of)(\X_{H_{O}}) \right]\\
&= \mathbb{E}_{(x,i)}\left[ e^{-rH_{O}}\tilde V(\X_{H_{O}})\right]\\
&= \mathbb{E}_{(x,i)}\left[ e^{-rH_{O}}g(\X_{H_{O}})\right],
\end{align*}
where, in the last step, assumption (2) is applicable since $\X_{H_O}\in \partial O$. If $\0\in O$
we have 
\begin{align}
\label{XO}
\nonumber
\tilde V(x,i)&= \mathbf{G}_r({\bf 1}_Of)(x,i) + \g_r((x,i),\0)\,\Delta_\0\\
&=\mathbb{E}_{(x,i)}\left[ e^{-rH_{O}}\mathbf{G}_r({\bf 1}_Of)(\X_{H_{O}}) \right]+    \g_r((x,i),\0)\,\Delta_\0.
\end{align}
To analyze the second term on RHS of (\ref{XO})  recall from  (\ref{gk1}) in Theorem \ref{Thrm:GreenKernel1} that  
$$
\mathbf{g}_r((x,i),\mathbf{0}) = c_r^{-1}\varphi(x).
$$
Hence, the function $\mathbf{g}_r(\cdot,\mathbf{0})$ is $r$-harmonic on the legs of $\Gamma$; in other words, if $A$ is a Borel subset of $[0,+\infty)$ including $0$, $\X_0=(x,i)\in (A^c,i)$ and $H_{A}:=\inf\{t\,:\, \X_t\in (A,i)\}$ then 
\begin{equation*}
\mathbf{g}_r((x,i),\mathbf{0})=\mathbb{E}_{(x,i)}\left[ e^{-rH_{{A}}}\mathbf{g}_r(\X_{H_{{A}}},\mathbf{0})\right].
\end{equation*}
Consequently, if $(x,i) \in O^c$ and $\0\in O$
 \begin{equation*}
\mathbf{g}_r((x,i),\mathbf{0})=\mathbb{E}_{(x,i)}\left[ e^{-rH_{{O}}}\mathbf{g}_r(\X_{H_{{O}}},\mathbf{0})\right],
\end{equation*} 
and (\ref{XO}) can be developed as follows 
\begin{align*}
\tilde V(x,i)&=\mathbb{E}_{(x,i)}\left[ e^{-rH_{O}}\mathbf{G}_r({\bf 1}_Of)(\X_{H_{O}}) \right]+  \mathbb{E}_{(x,i)}\left[ e^{-rH_{{O}}}\mathbf{g}_r(\X_{H_{{O}}},\mathbf{0})\right]\,\Delta_\0\\
&=\mathbb{E}_{(x,i)}\left[ e^{-rH_{O}}\Big(\mathbf{G}_r({\bf 1}_Of)(\X_{H_{O}}) +  \mathbf{g}_r(\X_{H_{{O}}},\mathbf{0})\,\Delta_\0\Big)\right]\\
&=\mathbb{E}_{(x,i)}\left[ e^{-rH_{O}}\,\tilde V(\X_{H_{{O}}})\right]\\
&=\mathbb{E}_{(x,i)}\left[ e^{-rH_{O}}\,g(\X_{H_{{O}}})\right]
\end{align*}
proving (\ref{claim}) also in this case. To show that (\ref{claim}) holds when $(x,i)\in O$ we show that $\tilde V=g$ on $O$. This clearly implies the claim since if   $(x,i)\in O$ then a.s. $H_O=0$ in (\ref{claim}). Subtracting (\ref{rie1}) from (\ref{ma3}) yields
 for all $(x,i)$
\begin{align}
\label{Fyy2}
g(x,i) - \tilde{V}(x,i)&=\mathbf{G}_r({\bf 1}_{O^c}f)(x,i)
 + \g_r((x,i),\0)\,{\bf 1}_{O^c}(\0)\,\Delta_\0
\end{align}
Let $(x,i) \in O$ and $\0\in O$. Then, evoking (\ref{Fyy1}),
\begin{align*}
g(x,i) - \tilde{V}(x,i)&=\mathbf{G}_r({\bf 1}_{O^c}f)(x,i)\\
&= \mathbb{E}_{(x,i)}\left[ e^{-rH_{O^c}}\mathbf{G}_r({\bf 1}_{O^c}f)(\X_{H_{O^c}}) + \int_0^{H_{O^c}} e^{-rH_{O^c}}({\bf 1}_{O^c}f)(\X_t)dt \right]\\
&= \mathbb{E}_{(x,i)}\left[ e^{-rH_{O^c}}\mathbf{G}_r({\bf 1}_{O^c}f)(\X_{H_{O^c}}) \right]\\
&=0,
\end{align*}
where in the in the last step  assumption 2 is applicable since $\X_{H_{O^c}}\in \partial O$. Finally, suppose $(x,i) \in O$ and $\0\in O^c$. Then, with analogous arguments as above,
\begin{align*}
g(x,i) - \tilde{V}(x,i)&=\mathbf{G}_r({\bf 1}_{O^c}f)(x,i)+ \mathbf{g}_r((x,i),\mathbf{0})\Delta_{\mathbf{0}}\\
&= \mathbb{E}_{(x,i)}\left[ e^{-rH_{O^c}}\mathbf{G}_r({\bf 1}_{O^c}f)(\X_{H_{O^c}}) \right]+\mathbb{E}_{(x,i)}\left[ e^{-rH_{{O}^c}}\mathbf{g}_r(\X_{H_{{O}^c}},\mathbf{0})\Delta_{\mathbf{0}}\right]
\\
&= \mathbb{E}_{(x,i)}\left[ e^{-rH_{O^c}}\Big(\mathbf{G}_r({\bf 1}_{O^c}f)(\X_{H_{O^c}}) +\mathbf{g}_r(\X_{H_{{O}^c}},\mathbf{0})\Delta_{\mathbf{0}}\Big)\right]\\
&=0.
\end{align*}
This completes the proof.$\hskip10.7cm\square$

\relax

\relax

\begin{theorem}\label{uniqueness_green}
Let $g, f, O, \Delta_\0$ and $\tilde V$ satisfy the assumptions in Theorem \ref{general}. 
Furthermore, let $\cal U$ be a class of nonempty closed subsets of
$\Gamma$ containing $O$ such that for all $U, U'\in{\cal U}$
\begin{description}
\item[(a)]$U\cap U'\not= \emptyset$,
		\item[(b)]$(x,i)\in U \Rightarrow f(x,i)\geq 0$, 
\item[(c)] $\0\in U^c$,  
	\item[(d)] the following implications  hold true:
	\begin{description}
		\item[(i)] if $\P_{(x,i)}({\rm Leb}(\{t\leq H_{U^c}:\X_t\not\in U'\})=0)=1$ for all $(x,i)\in U\cap U'$, then $U\subseteq U'$,
		\item[(ii)] if $\P_{(x,i)}({\rm Leb}(\{t\leq H_{U}\:\X_t\in U'\})=0)=1$ for all $(x,i)\in U^c$, then $U'\subseteq U$,
	\end{description}
where ${\rm Leb}$ denotes the Lebesgue measure.
\end{description}
Then $O$ is the unique set in the class $\cal U$ which satisfies the equation (\ref{b0}).
\end{theorem}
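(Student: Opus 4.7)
Suppose $U'\in\mathcal{U}$ also satisfies \eqref{b0}; the plan is to deduce $U'=O$ by verifying the antecedents of (d)(i) and (d)(ii) with the pair $U=O$ and the candidate $U'=U'$, after which the two implications deliver $O\subseteq U'$ and $U'\subseteq O$.

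The first step is to read off probabilistic formulas for $\tilde V_U$ from Lemma \ref{apu1} and \eqref{b0}. For any $U\in\mathcal{U}$, condition (c) gives $\mathbf{0}\in U^c$, so $\tilde V_U=\mathbf{G}_r(\mathbf{1}_U f)$. Taking $\tau=H_U$ in Lemma \ref{apu1} for $(x,i)\in U^c$, combined with $\X_{H_U}\in\partial U$ (by path continuity and closedness of $U$) and \eqref{b0}, yields $\tilde V_U(x,i)=\mathbb{E}_{(x,i)}[e^{-rH_U}g(\X_{H_U})]$. For $(x,i)\in U$, taking $\tau=H_{U^c}$ in Lemma \ref{apu1}, noting that $H_{U^c}\leq H_\mathbf{0}$ (again by (c)), and combining the strong Markov property applied to the decomposition $g=\mathbf{G}_r f+\g_r(\cdot,\mathbf{0})\Delta_\mathbf{0}$ with the $r$-harmonicity of $\g_r(\cdot,\mathbf{0})$ on $\Gamma\setminus\{\mathbf{0}\}$, a Feynman--Kac type identification yields $\tilde V_U(x,i)=g(x,i)$ on $U$. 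Applied to both $O$ and $U'$, this produces $\tilde V_O=g$ on $O$ and $\tilde V_{U'}=g$ on $U'$.

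Since $\tilde V_O=V$ by Theorem \ref{general}, the value-function inequality $V\geq\mathbb{E}_{(x,i)}[e^{-r\sigma}g(\X_\sigma)]$ applied with $\sigma=H_{U'}$, together with the previous step, gives after a case analysis on $O\cap U'$, $O\setminus U'$, $U'\setminus O$ and $(O\cup U')^c$ that $\phi:=\tilde V_O-\tilde V_{U'}\geq 0$ everywhere, vanishes on $O\cap U'$, and is strictly positive on $U'\setminus O$ (using $V>g$ on the continuation region $O^c$). Applying Lemma \ref{apu1} to the identity $\phi=\mathbf{G}_r((\mathbf{1}_O-\mathbf{1}_{U'})f)$ with $\tau=H_{O\cap U'}$ (finite a.s.\ by recurrence of $\X$ and $O\cap U'\neq\emptyset$ from (a)), and invoking $\phi(\X_{H_{O\cap U'}})=0$, produces
\[
\phi(x,i)=\mathbb{E}_{(x,i)}\!\Bigl[\int_0^{H_{O\cap U'}}\!e^{-rt}\bigl(\mathbf{1}_{O\setminus U'}-\mathbf{1}_{U'\setminus O}\bigr)(\X_t)\,f(\X_t)\,dt\Bigr].
\]
A sign chase in this identity, combined with $f\geq 0$ on $O\cup U'$ (from (b) for both sets) and the absolute continuity of the occupation time of $\X$ with respect to $\mathbf{m}$ (visible from Theorem \ref{Thrm:GreenKernel1}), forces for every $(x,i)\in O^c$ the Lebesgue condition $\P_{(x,i)}(\mathrm{Leb}\{t\leq H_O:\X_t\in U'\}=0)=1$; this is exactly the antecedent of (d)(ii), so $U'\subseteq O$. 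A symmetric argument using $\tau=H_{O^c}$ started from $(x,i)\in O\cap U'$ yields the antecedent of (d)(i), giving $O\subseteq U'$ and hence $U'=O$.

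The main obstacle is the passage from the $f$-weighted integral identity to the pure Lebesgue-measure-zero statement. On regions where $f$ vanishes, the implication is not automatic, and one must exploit the strict sign information for $\phi$ established in the case analysis together with the occupation-time density structure of the diffusion spider to exclude visits by $\X$ to $U'\triangle O$; conditions (d)(i)--(d)(ii) are precisely what promote these probabilistic/analytic facts into the desired set-theoretic inclusions.
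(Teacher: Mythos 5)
Your overall architecture matches the paper's: set $W:=\tilde V_{U'}=\mathbf{G}_r(\mathbf{1}_{U'}f)$ (using \textbf{(c)} to drop the $\Delta_\0$ term), show $W=g$ on $U'$ and $W\le \tilde V=V$ everywhere, and then convert occupation-time identities into the antecedents of \textbf{(d)}. The direction you call the ``symmetric argument'' --- expanding at $\tau=H_{O^c}$ from $(x,i)\in O\cap U'$, where the path stays in $O$ so that $\mathbf{1}_{U'\setminus O}(\X_t)=0$ and the integrand reduces to $\mathbf{1}_{O\setminus U'}f\ge 0$ --- is sound and is essentially the paper's proof of $O\subseteq U'$.

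The direction $U'\subseteq O$, however, has a genuine gap as written. Starting from $(x,i)\in O^c$ with $\tau=H_{O\cap U'}$, your identity
\[
\phi(x,i)=\mathbb{E}_{(x,i)}\Bigl[\int_0^{H_{O\cap U'}}e^{-rt}\bigl(\mathbf{1}_{O\setminus U'}-\mathbf{1}_{U'\setminus O}\bigr)(\X_t)\,f(\X_t)\,dt\Bigr]
\]
has an integrand of \emph{mixed} sign along the path: before reaching $O\cap U'$ the spider may visit both $O\setminus U'$ (positive contribution) and $U'\setminus O$ (negative contribution). From $\phi\ge 0$ you only get that the negative part is dominated by the positive part, and no amount of ``sign chasing'' or appeal to the strict positivity of $\phi$ on $U'\setminus O$ squeezes the $U'\setminus O$ term to zero. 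The obstacle is not (only) the vanishing of $f$, which you flag; it is that there is no squeeze at all. The repair is to reverse the order of the two inclusions and \emph{use} the first one in the second: once $O\subseteq U'$ is known, $O\cap U'=O$, so $\tau=H_O$, the set $O\setminus U'$ is empty, the integrand collapses to $-\mathbf{1}_{U'\setminus O}f\le 0$, and $\phi\ge 0$ forces $\mathbb{E}_{(x,i)}[\int_0^{H_O}e^{-rt}\mathbf{1}_{U'}(\X_t)f(\X_t)\,dt]=0$, which is the antecedent of \textbf{(d)(ii)}. This is exactly how the paper proceeds: it proves $O\subseteq U$ first, and in the second step explicitly invokes $O\subseteq U$ together with $W=g$ on $U$ to get $W(\X_{H_O})=g(\X_{H_O})$ and the chain $V\ge \mathbb{E}[e^{-rH_O}g(\X_{H_O})]+\mathbb{E}[\int_0^{H_O}e^{-rs}f\mathbf{1}_{U}(\X_s)\,ds]\ge V$.
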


\begin{proof}
Let $U \in \mathcal{U}$ be such that for all $(x,i) \in \partial U$
\begin{align}\label{00}
{\bf G_r}(f\mathbf{1}_{U^c})(x,i) + \mathbf{g}_r((x,i),\mathbf{0})\mathbf{1}_{U^c}(\mathbf{0})\Delta_{\mathbf{0}}=0,
\end{align}
i.e., (\ref{b0}) holds when $O$ is replaced with $U$. The claim is that $U=O$. To prove this, define  
\begin{align*}
W(x,i) := {\bf G_r}(f\mathbf{1}_{U})(x,i) + \mathbf{g}_r((x,i),\mathbf{0})\mathbf{1}_{U}(\mathbf{0})\Delta_{\mathbf{0}}.
\end{align*}
Using  the assumed representation of the reward function $g$, see \eqref{ma3}, yields 
\begin{align*}
g(x,i)-W(x,i) &= {\bf G_r}(f\mathbf{1}_{U^c})(x,i) + \mathbf{g}_r((x,i),\mathbf{0})\mathbf{1}_{U^c}(\mathbf{0})\Delta_{\mathbf{0}}.
\end{align*}
From (\ref{00}) it follows that for all $(x,i) \in \partial U$ 
\begin{align}\label{000}
g(x,i)-W(x,i)=0,
\end{align}
and arguing analogously as in the proof of Theorem \ref{general} it is seen that (\ref{000}) holds for all $(x,i) \in U$. Recall that $\tilde V$ is the value function of the problem. We show now that for all $(x,i) \in \Gamma$
\begin{align}\label{Eq:UniquenessProofInEq1}
W(x,i) \leq\tilde V(x,i). 
\end{align}
Firstly, for $(x,i) \in U$ it holds $W(x,i)=g(x,i)\leq V(x,i)$ since $\tilde V$ is a majorant of $g$. Secondly, let $(x,i) \in U^c$. Proceeding similarly as when proving (\ref{claim}) in the proof of Theorem \ref{general} it is seen that
\begin{align*}
W(x,i) &=\mathbb{E}_{(x,i)}\left[e^{-rH_{U}} W(\X_{H_{U}})\right] \\
&= \mathbb{E}_{(x,i)}\left[e^{-rH_{U}} g(\X_{H_{U}})  \right] \\
&\leq  \tilde V(x,i),
\end{align*}
where the second equality is valid since  $\X_{H_{U}}\in\partial U$, and the inequality holds  since $\tilde V$ is the value of the problem. Hence, (\ref{Eq:UniquenessProofInEq1}) is verified.

Next we show that  $O \subseteq U$. Recall the assumption  {\bf (a)}, and let  $(x,i) \in O \cap U$. Using Lemma \ref{apu1}, the fact that the function $\mathbf{g}_r(\cdot,\mathbf{0})$ is $r$-harmonic on the legs of $\Gamma$, and the inequality $\eqref{Eq:UniquenessProofInEq1}$, yield
\begin{align}
\label{ff1}
\nonumber
g(x,i) &= W(x,i)\\
\nonumber
&= \mathbb{E}_{(x,i)}\left[ e^{-rH_{O^c}}\mathbf{G}_r(f{\bf 1}_{U})(\X_{H_{O^c}}) \right]+\mathbb{E}_{(x,i)}\left[ e^{-rH_{{O}^c}}\mathbf{g}_r(\X_{H_{{O}^c}},\mathbf{0})\mathbf{1}_{U}(\mathbf{0})\Delta_{\mathbf{0}}\right]\\
\nonumber
                           &\hskip2cm+ \mathbb{E}_{(x,i)}\left[ \int_0^{H_{O^c}} e^{-rs} f(\X_s)\mathbf{1}_U(\X_s)ds \right]  
\\
\nonumber
				&= \mathbb{E}_{(x,i)}\left[ e^{-rH_{O^c}} W(\X_{H_{O^c}}) \right] + \mathbb{E}_{(x,i)}\left[ \int_0^{H_{O^c}} e^{-rs} f(\X_s)\mathbf{1}_U(\X_s)ds \right] \\
				&\leq \mathbb{E}_{(x,i)}\left[ e^{-rH_{O^c}} \tilde V(\X_{H_{O^c}}) \right] 
                          + \mathbb{E}_{(x,i)}\left[ \int_0^{H_{O^c}} e^{-rs} f(\X_s)\mathbf{1}_U(\X_s)ds \right].
\end{align}
On the other hand, since $\0\in O^c$ by assumption {\bf (c)} we have (see also  (\ref{rie1}))   
\begin{align}
\label{ss1}
\nonumber
g(x,i) &= \tilde V(x,i) \\	\nonumber
\nonumber
&=\mathbf{G}_r(f{\bf 1}_{O})(x,i) +   \g_r((x,i),\0)\,{\bf 1}_O(\0)\,\Delta_\0\\
\nonumber
&=\mathbf{G}_r(f{\bf 1}_{O})(x,i) \\
\nonumber
&= \mathbb{E}_{(x,i)}\left[ e^{-rH_{O^c}}\mathbf{G}_r(f{\bf 1}_{O})(\X_{H_{O^c}}) \right]+ \mathbb{E}_{(x,i)}\left[ \int_0^{H_{O^c}} e^{-rs} f(\X_s)\mathbf{1}_O(\X_s)ds \right] 
\\
	&=\mathbb{E}_{(x,i)}\left[ e^{-rH_{O^c}} V(\X_{H_{O^c}}) \right] 
                          + \mathbb{E}_{(x,i)}\left[ \int_0^{H_{O^c}} e^{-rs} f(\X_s)ds \right].
\end{align}
Subtracting (\ref{ss1}) from (\ref{ff1}) yields
\begin{align*}
0 &\leq   \mathbb{E}_{(x,i)}\left[ \int_0^{H_{O^c}} e^{-rs} f(\X_s)\mathbf{1}_U(\X_s)ds \right]-\mathbb{E}_{(x,i)}\left[ \int_0^{H_{O^c}} e^{-rs} f(\X_s)ds \right]   \\
	&= - \mathbb{E}_{(x,i)}\left[ \int_0^{H_{O^c}} e^{-rs} f(\X_s)\mathbf{1}_{U^c}(\X_s)ds \right] \leq 0.
\end{align*}
Thus $\mathbb{P}_{(x,i)}({\rm Leb}(\{t \leq {H_{O^c}} | \X_t \in U^c \})=0)=1$ for all $(x,i)\in O \cap U$,  and  it follows from assumption {\bf c(i)} that $O \subseteq U$. 

Finally, we prove the opposite inclusion, i.e., $U\subseteq O$. For this,  let $(x,i)\in O^c$ and recall  that  $H_O=\inf\{ t\geq0 | X_t \in O \}$ is an optimal stopping time.  Calculating similarly as above we have 
\begin{align*}
V(x,i) &\geq W(x,i) \\
&= {\bf G_r}(f\mathbf{1}_{U})(x,i) + \mathbf{g}_r((x,i),\mathbf{0})\mathbf{1}_{U}(\mathbf{0})\Delta_{\mathbf{0}}\\
&= {\bf G_r}(f\mathbf{1}_{U})(x,i) \\
&= \mathbb{E}_{(x,i)}\left[ e^{-rH_{O}}\mathbf{G}_r(f{\bf 1}_{U})(\X_{H_{O}}) \right]+ \mathbb{E}_{(x,i)}\left[ \int_0^{H_{O}} e^{-rs} f(\X_s)\mathbf{1}_U(\X_s)ds \right] 
\\		
&= \mathbb{E}_{(x,i)}\left[ e^{-r H_O} W(\X_{H_O}) \right] + \mathbb{E}_{(x,i)}\left[ \int_0^{H_O} e^{-rs}f(\mathbf{\X}_s)\mathbf{1}_{U}(\mathbf{\X}_s)ds \right].
\end{align*}
Since $O \subseteq U$ and $W=g$ on $U$, it holds that $W(\mathbf{X}_{H_O})=g(\mathbf{X}_{H_O})$. Thus
\begin{align*}
V(x,i) \geq  \mathbb{E}_{(x,i)}\left[ e^{-r H_O} g(X_{H_O}) \right] + \mathbb{E}_{(x,i)}\left[ \int_0^{H_O} e^{-rs}f(\mathbf{X}_s)\mathbf{1}_{U}(\mathbf{X}_s)ds \right] \geq V(x,i). 
\end{align*}
This implies $\mathbb{P}_{(x,i)}({\rm Leb}(\{t \leq H_O | \X_t \in U \})=0)=1$ for all $(x,i)\in O^c$. Now by assumption {\bf d(ii)} we conclude $U \subseteq O$.
\end{proof}

\relax

\relax

To apply Theorem \ref{general} it is helpful to express $f$ and $\Delta_\0$  in terms of the function $g$ and its derivatives. This is achieved with similar analysis as presented  for excessive functions in Theorem~\ref{rep0} and the proof is Appendix A. 

\begin{proposition}\label{rwdmeasure}
Assume that the reward function $g$  has the representation (\ref{ma3}). Then 
\begin{description}
	\item[(a)] $\Delta_\0=-{\bf D}{g}(\0)$,
	\item[(b)] for all $x\geq 0$ and $i=1,2,...,n$
\begin{equation}
\label{ms}
	\int_{(x,\infty)} \varphi(y)\,f(y,i)\,m(dy)= g^+(x,i)\varphi(x)  - g(x,i)\varphi^+(x).
	\end{equation} 
\end{description}
\end{proposition}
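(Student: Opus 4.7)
My plan follows the template of the proof of Theorem \ref{rep0}. The first step is to substitute the explicit Green kernel from Theorem \ref{Thrm:GreenKernel1} into the representation \eqref{ma3}. Splitting the same-leg integral at $y=x$ and using $\tilde{\psi}(y,i)=\psi^\partial(y)/p_i+\varphi(y)/c_r$ together with $\g_r((x,i),\0)=\varphi(x)/c_r$, a direct computation exactly parallel to the $\Delta_1+\Delta_2+\Delta_3$ decomposition in the proof of Theorem \ref{glue} will yield
\begin{equation*}
g(x,i)=\varphi(x)A(x,i)+\psi^\partial(x)B(x,i)+F\varphi(x),
\end{equation*}
with $A(x,i):=\int_0^x\psi^\partial(y)f(y,i)\,m(dy)$, $B(x,i):=\int_x^\infty\varphi(y)f(y,i)\,m(dy)$, and the constant (in $(x,i)$)
\begin{equation*}
F:=\frac{1}{c_r}\left(\sum_{k=1}^n p_k\int_0^\infty\varphi(y)f(y,k)\,m(dy)+\Delta_\0\right).
\end{equation*}
Evaluating at $(x,i)=\0$ also gives $g(\0)=F$.

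Next, I will scale-differentiate $g$. Reproducing the limiting argument used in the proof of Theorem \ref{rep0} for $v^+$ (where the $\sigma((x,x+\delta],i)$ piece vanishes in the limit), one obtains the variation-of-parameters cancellation $\varphi(x)A^+(x,i)+\psi^\partial(x)B^+(x,i)=0$, so that
\begin{equation*}
g^+(x,i)=\varphi^+(x)A(x,i)+(\psi^\partial)^+(x)B(x,i)+F\varphi^+(x).
\end{equation*}
Forming the combination $g^+(x,i)\varphi(x)-g(x,i)\varphi^+(x)$, every term carrying $A$ or $F$ cancels, and the Wronskian identity $(\psi^\partial)^+(x)\varphi(x)-\psi^\partial(x)\varphi^+(x)=w_r=1$ from \eqref{w0} collapses the remainder to exactly $B(x,i)$. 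This is \eqref{ms}, proving (b).

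For (a), I set $x=0$ in (b) and use the normalizations $\varphi(0)=1$, $\varphi^+(0)=-c_r$ to obtain $\int_0^\infty\varphi(y)f(y,i)\,m(dy)=g^+(\0,i)+c_r\,g(\0)$. Multiplying by $p_i$ and summing over $i$ yields $\sum_i p_i\int_0^\infty\varphi(y)f(y,i)\,m(dy)=\mathbf{D}g(\0)+c_r\,g(\0)$. Comparing with $c_r\,g(\0)=c_rF=\sum_k p_k\int_0^\infty\varphi(y)f(y,k)\,m(dy)+\Delta_\0$ from the first step immediately gives $\Delta_\0=-\mathbf{D}g(\0)$. The only delicate technical point is the scale-differentiability of $A$ and $B$ and the pointwise validity of the cancellation $\varphi A^++\psi^\partial B^+=0$; this is the same estimate appearing in the proof of Theorem \ref{rep0} and relies only on the local boundedness of $f$, the continuity of $\varphi,\psi^\partial,S,m$, and the integrability hypothesis \eqref{ma3011}, which ensures $B(x,i)<\infty$ for every $x\geq 0$.
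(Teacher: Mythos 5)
Your proposal is correct and follows essentially the same route as the paper's Appendix A proof: substitute the explicit Green kernel into \eqref{ma3}, organize the result as $\varphi(x)A(x,i)+\psi^{\partial}(x)B(x,i)+F\varphi(x)$ (the paper writes the same decomposition as $\tfrac{1}{p_i}\varphi\,u+\tfrac{1}{c_r}\varphi\,\mu(\Gamma)$ with $\mu(dy,i)=\varphi(y)f(y,i)p_i\,m(dy)$), kill the local integral term in the scale derivative by the same estimate, and collapse the combination $g^{+}\varphi-g\varphi^{+}$ via the Wronskian normalization \eqref{w0}. The only cosmetic difference is that you obtain (a) by specializing (b) to $x=0$ and comparing with $g(\0)=F$, whereas the paper runs the limiting argument at the origin separately before treating general $x$; both are valid.
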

 
\begin{remark}
\label{rm11}
{\sl 
As is seen from the proof, the existence of the derivatives of $g$, in fact, follows from the assumption that $g$ satisfies (\ref{ma3}). Notice also that if the speed measure $m$ has a continuous derivative $m'$ with respect to the Lebesgue measure, the scale function $S$ is continuously differentiable and $g$ is two times continuously differentiable then  the formula (\ref{ms}) yields 
\begin{equation}
\label{msx}
f(x,i) =- \lp \frac d{dm}\frac d{dS}g(x,i)-rg(x,i)\rp{\color{black}= \lp r-\cG\rp g(x,i)}.
\end{equation} 
Hence, (\ref{ma3}) expresses, in a sense, the classical  fact  that the infinitesimal operator is the inverse of the resolvent operator. However, to make this precise, $g$ should be in the domain of the infinitesimal operator. We refer to \cite{CM14,CrocceMordecki} for this approach for one-dimensional diffusions. Instead of giving conditions for the reverse of Proposition \ref{rwdmeasure} to hold we calculate in the examples below $\Delta_\0$ as given in (a) and $f$ as given in (\ref{msx}) and verify that (\ref{ma3}) is satisfied.} \end{remark}
\relax

\relax

\section{Examples of optimal stopping}
\label{OSP}
\label{EXA1}
In our examples to follow the underlying diffusion $\X$ is a Brownian spider with $n$ legs.
In Example \ref{EXA11} the stopping region is connected and the problem is solved using the explicit expressions  for the representing measure of an excessive function displayed in Theorem \ref{rep0} and \ref{rep1}. Here it is also studied how the stopping region changes when the discounting parameter $r$ is varying.  
In Examples \ref{ex2} and \ref{ex3} the stopping region is disconnected. 
In this cases, we analyze the equation (\ref{b0}) in Theorem \ref{general} and deduce using Theorem \ref{uniqueness_green} that there is a unique solution. 
The resulting equations for the boundary points of the stopping region  are then solved numerically. 
\subsection{Example}
\label{EXA11}

Consider the optimal stopping problem (\ref{eq:osp}) for the Brownian spider $\X$ with 3 legs and the following payoff function
$$
g(x,1)=1+x,\qquad g(x,2)=\lp 1-\frac x2\rp^+,\qquad g(x,3)=(1-2x)^+.
$$
It is also assumed that   $p_1=p_2=p_3=1/3$. This assumption slightly simplifies the analysis but does not  diminish the understanding of more general cases.
That the problem has a finite value follows from Proposition \ref{fin00} 
evoking the explicit form of the harmonic function given in (\ref{mahar23}). 
The solution of the problem is given in the next proposition.  It is seen that the stopping region is connected for all values of the discounting parameter $r.$
\begin{proposition}\label{solu}
The stopping region $\Sigma$ of OSP  introduced above is as follows:
\begin{itemize}
\item[\rm(a)] in case $r>2$ there exist $x^*_2\in(0,2)$ and $x^*_3\in(0,1/2)$ such that 
\begin{equation}\label{S1}
\Sigma=\{(x,1) : 0\leq x\}\cup\{(x,2) : 0\leq x\leq x^*_2\}\cup\{(x,3) : 0\leq x\leq x^*_3\},
\end{equation}
\item[\rm(b)] in case $1/8<r\leq 2$  there exists $y^*_2\in(0,2)$ such that 
\begin{equation}\label{S2}
\Sigma=\{(x,1) : 0\leq x\}\cup\{(x,2) : 0\leq x\leq y^*_2\},
\end{equation}
\item[\rm(c)]  in case $0<r\leq 1/8$  there exists $z^*_1\geq 0$ such that 
\begin{equation}\label{S3}
\Sigma= \{(x,1) : z^*_1 \leq x\},
\end{equation}
\end{itemize}
 where any point $(0,i), i=1,2,3,$ is identified as the origin $\0$. 
\end{proposition}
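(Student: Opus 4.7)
The plan is to apply the representation approach for connected stopping regions. In each of the three regimes we conjecture the stopping region $\Sigma$, build a candidate value function $V$ by setting $V=g$ on $\Sigma$ and extending $V$ as an $r$-harmonic function of $\X$ on $C:=\Gamma\setminus\Sigma$, determine the free parameters from continuity of $V$, classical smooth fit at the interior free-boundary points of the legs, the gluing condition at $\mathbf{0}$ from Theorem~\ref{glue}, and decay at $+\infty$ on any leg whose continuation reaches infinity, and then verify that the resulting $V$ is $r$-excessive and dominates $g$. Theorem~\ref{apu} then identifies $V$ as the value function and gives $\tau^*=H_{\Sigma}$ as an optimal stopping time. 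For the Brownian spider the $r$-harmonic functions of $X^\partial$ on a leg are spanned by $\varphi(x)=e^{-\theta x}$ and $\psi^\partial(x)=\sinh(\theta x)/\theta$ with $\theta=\sqrt{2r}$, and decay at infinity on a leg whose continuation reaches $+\infty$ forces the $\psi^\partial$-coefficient to vanish, so $V(x,i)=B_i e^{-\theta x}$ there.

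For part~(a), $\theta>2$, set $V\equiv g$ on leg~$1$ (so $v_0:=V(\mathbf{0})=1$ and $V^+(\mathbf{0},1)=1$) and $V(x,i)=B_i e^{-\theta x}$ on $(x_i^*,\infty)$ for $i=2,3$. Smooth fit against the linear piece of $g(\cdot,i)$ at $x_i^*$ gives the elementary formula $x_i^*=a/b-1/\theta$ for $g(x,i)=a-bx$, yielding $x_2^*=2-1/\theta$ and $x_3^*=1/2-1/\theta$, both positive precisely when $\theta>2$, recovering the first threshold $r=2$. The gluing sum $\sum_i p_i V^+(\mathbf{0},i)=\tfrac{1}{3}(1-\tfrac{1}{2}-2)=-\tfrac{1}{2}\le 0$ is consistent with $\mathbf{0}\in\Sigma$. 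In the regime $1/8<r\le 2$ of part~(b) one has $\theta\le 2$, hence $x_3^*\le 0$ and the stopping region on leg~$3$ collapses; continuation on all of leg~$3$ gives $V(x,3)=v_0 e^{-\theta x}=e^{-\theta x}$, and the inequality $e^{-\theta x}\ge(1-2x)^+$ follows from $e^{-y}\ge 1-y$ together with $\theta\le 2$. The gluing sum becomes $\tfrac{1}{3}(\tfrac{1}{2}-\theta)\le 0$ iff $\theta\ge 1/2$, i.e.\ $r\ge 1/8$, which pins down the second threshold.

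In part~(c), $r\le 1/8$, the previous ansatz violates the gluing condition at $\mathbf{0}$, so $\mathbf{0}$ must lie in the continuation region, which also forces the stopping region on leg~$1$ to retract to $[z_1^*,\infty)$ for some $z_1^*\ge 0$. Writing $V(x,1)=v_0\cosh(\theta x)+\gamma\sinh(\theta x)$ on $[0,z_1^*]$ and $V(x,i)=v_0 e^{-\theta x}$ on legs $i=2,3$, the gluing \emph{equality} at $\mathbf{0}$ (which holds by \eqref{rep02} since now $\mathbf{0}\in C$, so no representing mass sits at $\mathbf{0}$) gives $\gamma=2v_0$, and value-matching and smooth fit at $z_1^*$ against $g(x,1)=1+x$ provide two further equations; eliminating $v_0$ reduces the system to one transcendental equation $F(t)=0$ in $t:=\theta z_1^*\ge 0$. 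We verify that $F(0)=1-2\theta\ge 0$ (recovering $z_1^*=0$ exactly at $\theta=1/2$), that $F(t)\to-\infty$ as $t\to\infty$, and that $F$ is strictly monotone by direct differentiation, yielding a unique $z_1^*\ge 0$. In every case the proof concludes with a pointwise check $V\ge g$ (an exponential-versus-line argument leg by leg, using tangency at the free boundary) and the $r$-excessivity of $V$ via Theorem~\ref{charc2}, after which Theorem~\ref{apu} gives the claim. The main obstacle will be the monotonicity of $F$ in part~(c) and the careful matching of the regime boundaries $r=2$ and $r=1/8$ with the vanishing of $x_3^*$ and $y_2^*$, respectively.
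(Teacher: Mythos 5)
Your proposal is correct: it produces the same candidate value functions, the same thresholds $r=2$ and $r=1/8$, and the same free-boundary equations as the paper (e.g.\ your $x_2^*=2-1/\theta$, $x_3^*=1/2-1/\theta$ are exactly the zeros of the paper's functions $F_2,F_3$, and your eliminated equation in part (c) is, up to sign, the paper's $\bar F_1(z)=0$ since $\cosh(\theta x)+2\sinh(\theta x)=\theta\,\tilde\psi(x,1)$ for $p_1=1/3$). The overall architecture — conjecture $\Sigma$, form $\tilde V(\x)=\EE_\x[\e^{-rH_\Sigma}g(\X_{H_\Sigma})]$, check excessivity and $\tilde V\ge g$, invoke Theorem \ref{apu} — is also the paper's. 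Where you genuinely diverge is in how the free boundaries are pinned down and how excessivity is certified: the paper reads everything off the representing-measure formulas of Theorems \ref{rep0} and \ref{rep1}, locating $x_i^*$ as the unique zero of the tail mass $\sigma((x,\infty),i)=\frac{p_i}{c_r}\lp g^+\varphi-\varphi^+ g\rp$ and obtaining nonnegativity of the representing measure (hence excessivity) as a byproduct of the monotonicity of these functions, whereas you impose classical value matching, smooth fit, the gluing (in)equality at $\0$, and decay at infinity, and must then verify excessivity separately via Theorem \ref{charc2}. The two are equivalent here — the zero of $F_i$ is precisely the smooth-fit point, and your gluing equality $\gamma=2v_0$ in case (c) reproduces $\tilde\psi(\cdot,1)$ — but the paper's route packages the excessivity check into the sign analysis of a single explicit function per leg, while yours yields the threshold values more transparently at the cost of an extra verification step (the monotonicity of your $F(t)$ in part (c) does go through: $F'(t)=-(\theta+t)(\cosh t+2\sinh t)<0$). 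Both then finish with the same exponential-versus-line domination argument on legs 2 and 3.
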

\relax
\begin{proof}
The idea of the proof is to study the expressions (\ref{rep01}) and (\ref{rep02})  for $g$ and deduce what is the largest set such that the induced measure is a probability measure. Firstly, recalling that the scale function is $S(x)=x$ we have 
\begin{equation}\label{D1}
\mathbf{D} g(\0)= \frac 1 3 \lp g'(0,1)+g'(0,2)+g'(0,3)\rp= \frac 1 3 \lp 1-\frac 1 2- 2\rp=-\frac 1 2.
\end{equation}
This means that the payoff function satisfies the condition (\ref{glue1}) which should hold for excessive functions at $\0$ and, hence, it is possible that $\0\in \Sigma.$ Notice that if  $\0\in \Sigma$ then $V(\0)=g(\0)=1$. 
Consider next the function on the right hand side of (\ref{rep01}) for leg 2 evaluated for $f=g,$ that is, for $x\in(0,2),$ $i=2$, and $\theta:=\sqrt{2r}$
\begin{equation}
\label{f2}
  \frac{p_2}{c_r}\lp g^+(x,2)\varphi(x)-\varphi^+(x)g(x,2)\rp=\frac1{3\theta}\, \e^{-\theta x}\lp-\frac 12+\theta\lp1-\frac x 2\rp
  \rp =: F_2(x)
\end{equation}
Clearly, $F_2$ is decreasing and $F_2(2)<0$. Hence, if $F_2(0)>0$   there is a unique zero. This is the case when $r>1/8.$    
We study similarly $F_3$ defined for $x\in(0,1/2)$ by 
$$
F_3(x):=  \frac{p_3}{c_r}\lp g'(x,3)\varphi(x)-\varphi^+(x)g(x,3)\rp=\frac1{3\theta}\, \e^{-\theta x}\lp-2+\theta\lp1-2x\rp
  \rp.
  $$ 
It holds that $F_3$ is decreasing and $F_3(1/2)<0$. Hence, if $F_3(0)>0$   there is a unique zero. This is the case when $r>2.$ 
Finally, consider for $x\geq 0$
$$
F_1(x):=  \frac{p_1}{c_r}\lp g'(x,1)\varphi(x)-\varphi^+(x)g(x,1)\rp=\frac1{3\theta}\, \e^{-\theta x}\lp1+\theta\lp1+x\rp
  \rp.
  $$  
 It is immediately seen that $F_1$ is positive everywhere and decreasing.  
 
We prove now (a). For $r>2$ let $x^*_2\in(0,2)$ be the zero of $F_2$ and  $x^*_3\in(0,1/2)$ the zero of $F_3$. Let $\Sigma_a$ be the set defined by the right hand side of (\ref{S1}) and introduce
 $$
  \tilde H_a:=\inf\{t:\X_t\in\Sigma_a\}
 $$
 and 
  $$
  \tilde V_a(\x):=\mathbb{E}_\x\lp \e^{-r\tilde H_a}g\lp \X_{\tilde H_a}\rp\rp=
  \begin{cases}
  \vspace{1mm}
  g(\x),& \x\in \Sigma_a,\\
  \vspace{1mm}
g(x^*_2,2) {\displaystyle \frac{\varphi(x)}{\varphi(x^*_2)}} ,& x\geq x^*_2\ {\rm on\ leg}\ 2,\\
g(x^*_3,3){\displaystyle \frac{\varphi(x)}{\varphi(x^*_3)}} ,& x\geq x^*_3\ {\rm on\ leg}\ 3.
  \end{cases}
 $$
Recall,  cf.  (\ref{H0}), that for $x\geq y\geq 0$ 
$$
\frac{\varphi(x)}{\varphi(y)}=\mathbb{E}_x\lp \e^{-rH_y}\rp,
$$
where $H_y:=\inf\{t:X_t=y\}.$ Clearly, $\tilde V_a$ dominates $g$. Moreover, it can be checked that $\tilde V_a$ is $r$-excessive by calculating the expressions in (\ref{rep01}) on the legs (in fact, these are the functions $F_1, F_2$, and $F_3$ with some multiplies) and  in (\ref{rep02}). Notice that $\mathbf{D} \tilde V(\0) =\mathbf{D} g(\0)= -\frac 12$ by (\ref{D1}).  Evoking Proposition \ref{apu} completes the proof of (a). 

Consider next the case (b). Analogously as above, we let $\Sigma_b$ denote the right hand side of (\ref{S2}) where $x^*_2$ is the unique zero of $F_2$. Furthermore, 
  $$
  \tilde H_b:=\inf\{t:\X_t\in\Sigma_b\}
 $$
and 
  $$
  \tilde V_b(\x):=\mathbb{E}_\x\lp \e^{-r\tilde H_b}g\lp \X_{\tilde H_b}\rp\rp=
  \begin{cases}
  \vspace{2mm}
  g(\x),& \x\in \Sigma_b,\\
  \vspace{2mm}
g(x^*_2,2) {\displaystyle \frac{\varphi(x)}{\varphi(x^*_2)}} ,& x\geq x^*_2 \ {\rm on\ leg}\ 2,\\
\varphi(x) ,& x\geq 0 \ \ \ {\rm on\ leg}\ 3.
  \end{cases}
 $$
Clearly, $\tilde V_b$ dominates $g$ on the legs 1 and 2.  Since $1/8<r\leq 2$ it is easily seen that $\tilde V_b$ 
dominates $g$ also on the leg 3. We can check the $r$-excessivity of  $\tilde V_b$ by calculating the representing measure. Notice, in particular, that  $\mathbf{D} \tilde V_b(\0) = \frac 1 3 \lp 1-\frac 1 2- \theta\rp\leq 0 $. 

We consider now case (c). 
The three functions $F_1,F_2 $ and $F_3$ above have no roots, so we should consider the function on the l.h.s. of \eqref{rep12} with $i_o=1$.
The function is proportional to
$$
\bar{F}_1(x)=(1+x)\tilde\psi^-(x,1)-\tilde\psi(x,1)
$$
with $\tilde\psi$ in \eqref{tildepsi1}. 
After some computations we obtain
$$
\bar{F}_1(x)={\theta(1+x)\over 2}
\left(
3e^{\theta x}+e^{-\theta x}
\right)-\frac12\left(
3e^{\theta x}-e^{-\theta x}
\right)
$$
For $r\leq 1/8$ we have $\theta\leq 1/2$ and $\bar{F}_1(0)\leq 0$ (with equality only at $r=1/8$).
Furthermore, by differentiation we obtain that the function is increasing and it follows that it has a unique non-negative root $z^*_1$. 
Consider now $\Sigma_c$ the set in the l.h.s. of \eqref{S3} and the stopping time
$$
 \tilde H_c:=\inf\{t:\X_t\in\Sigma_c\}.
 $$
Applying the hitting time in \eqref{lemma1}, we obtain the candidate value function for the problem:
\begin{equation}\label{vece}
\tilde V_c(\x):=\mathbb{E}_\x\lp \e^{-r\tilde H_{z^*_1}}g\lp \X_{\tilde H_{z^*_1}}\rp\rp=
  \begin{cases}
  \vspace{2mm}
  g(\x),& \x\in \Sigma_c,\\
\vspace{2mm}
\displaystyle \frac{g(z^*_1,1)}{\tilde\psi(z^*_1,1)}\tilde\psi(x,1),& 0\leq x\leq z^*_1 \ {\rm on\ leg}\ 1,\\

\displaystyle{g(z^*_1,1)\over \tilde\psi(z^*_1,1)}{\varphi(x)\over\theta} ,& x\geq 0 \  {\rm on\ legs}\ 2,3.
\end{cases}
\end{equation}
We now apply Theorem \ref{apu}. 
The function $\tilde V_c$ is $r$-excessive because its representing measure is positive on the set $(z^*_1,\infty)$ on leg 1 according to the election of 
$z^*_1$ and \eqref{rep12}, and vanishes on the set $(0,z^*_1)$ on leg 1, and also on the other legs since the chosen function in (\ref{vece}) is   harmonic on the complement of $\Sigma_c$.
Let us now prove that 
\begin{equation}
\label{ineq}
\tilde V_c\geq g.
\end{equation}
It is clear that $\tilde V_c(\x)= g(\x)$ for $\x=(x,1)$ and $x\geq z^*_1$.
The condition $\bar F_1(z^*_1)=0$ implies
$$
\tilde\psi'(z^*_1,1)={\tilde\psi(z^*_1,1)\over g(z^*_1,1)},
$$
and then according to \eqref{vece} we have that $\tilde V_c'(z^*_1,1)=1$.
As the function $\tilde\psi$ is strictly convex, it holds
that $\tilde V_c(\x)\geq g(\x)$ everywhere on the leg 1. 
As a consequence, this yields that 
$$
\tilde V_c(\0)={g(z^*_1,1)\over \theta\tilde\psi(z^*_1,1)}=:K>1.
$$
Introduce the tangent line to $\tilde V_c$ at $x=0$ on the leg 2.
As $\tilde V_c(x,2)=Ke^{-\theta x}$, the tangent  is
$$
t(x)=K(1-\theta x).
$$
As $t(2)=K(1-2\theta)\geq 0$ we conclude that $\tilde V_c(x,2)\geq t(x) \geq g(x,2)$. The fact that $g(x,3)<g(x,2)$ concludes the proof of \eqref{ineq}.

\end{proof}

\begin{remark}[OSP for Skew Brownian motion]
In case $n=2$, process $\X$ is the usual skew Brownian motion with parameter $\beta=p_1$, 
identifying the first leg with the positive half-line and the second leg with the negative half-line,
and denoting by $x$ the distance from the origin.
For a general payoff function $g$, considered as a function on the line $\R$, 
	the application of Proposition \ref{COND} gives that when
	$$
	\beta g'(0+)-(1-\beta)g'(0-)>0,
	$$
	the origin $\0$  belongs for all values of the discounting parameter $r$ to the continuation region.
	This result can also be found in \cite{AlvarezSalminen} Proposition 1.
\end{remark}

\subsection{Example}\label{ex2}
Consider next the  optimal stopping problem for a Brownian spider with $n$ legs and payoff function 
\begin{align*}
g(x,i) = A_i x, \ x >0 , \ i = 1,2,...,n,
\end{align*}
where $A_i>0$. Straightforward but a bit lengthy calculations, 
show that the  representation (\ref{ma3}) for $g$ holds  (cf. Proposition \ref{rwdmeasure}) when 
\begin{align*}
\Delta_\0=-\mathbf{D}g(\mathbf{\0}) = -\sum_{i=1}^n p_iA_i
\end{align*}
and for $x>0$, see (\ref{msx}), 
\begin{equation*}
	f(x,i)=- \lp \frac 12 \frac{d^2}{dx^2} (A_i x)- rA_i x\rp = r\, A_i x.
	\end{equation*}
 Applying  Proposition \ref{fin00}, with the harmonic function given in (\ref{mahar23}), 
  it is seen that the value function $V$ of the problem is finite. From Theorem \ref{st1} we know that $V$ is the smallest $r$-excessive majorant of $g$ and the stopping set $\Sigma$ is given by (\ref{stop1}). Since $g$ and $V$ are continuous  it follows that $\Sigma$ is closed and, in particular,  $V(z,i)=g(z,i)$  when $(z,i)\in\partial\Sigma$. Moreover, notice  that $\0\not\in\Sigma$ because  $g(\0)=0$ (cf. also Proposition \ref{COND}). Recall that $V$ is $r$-harmonic on $C:=\Sigma^c$ and $V=g$ on $\Sigma$. Hence $V$ can be represented for all $(x,i)\in\Gamma$ as 
\begin{equation}
\label{rie2}
 V(x,i)=\sum_{k=1}^n \int_{0}^{+\infty}\g_r((x,i),(y,k))\,{\bf 1}_\Sigma((y,k))\,\,f(y,k)\, dy,
\end{equation}
 where the Green function $\g_r$ is given in \eqref{kernel}. We remark that (\ref{rie2}) is the Riesz representation of the $r$-excessive function $V$. Here it is also used that, due to the smooth pasting property (valid in this case, see Theorem\ref{smoothfit1} in Appendix B), the representing measure of $V$ does not put mass on the boundary points of $\Sigma$, see \eqref{rep03}, and is, therefore, absolutely continuous with respect to the Lebesgue measure. The density is $f$ since $V=g$ on $\Sigma$. Consequently, it holds (cf. (\ref{b0})) for all $(z,i)\in\partial \Sigma$ 
\begin{align}\label{b0x}
\sum_{k=1}^n \int_{0}^{+\infty}\g_r((z,i),(y,k))\,{\bf 1}_{\Sigma^c}((y,k))\, {2rp_k}\, A_k y\, dy= \g_r((z,i),\0)\sum_{k=1}^n p_k A_k. 
\end{align}
\begin{proposition}
\label{cone} There exist $z^*_i>0,\ i=1,2,...,n,$ such that  $\Sigma=\{ (x,i)\,:\, x\geq z^*_i, i=1,2,...,n\}$.
\end{proposition}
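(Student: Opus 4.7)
My plan is to apply the verification Theorem~\ref{general} together with the uniqueness Theorem~\ref{uniqueness_green}, using candidate stopping sets of threshold form $O = \bigcup_{i=1}^n \{(x,i): x \geq z_i\}$. The background assumptions of Theorem~\ref{general} --- positivity of $f$ on $O$ (since $f(y,k) = rA_k y \geq 0$), and $\0 \in O^c$ since $\Delta_\0 = -\sum_k p_k A_k < 0$ --- are automatic for such~$O$, so everything reduces to finding $z_i > 0$ satisfying the boundary equation~\eqref{b0} and then verifying $\tilde V \geq g$ on $O^c$.

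Substituting $O$ into~\eqref{rie1} and integrating against the explicit Green kernel~\eqref{kernel} (using $2r = \theta^2$), I would observe that on the continuation piece $[0, z_j]$ of leg~$j$ the candidate $\tilde V(\cdot, j)$ takes the $r$-harmonic form $(B_j/p_j)\sinh(\theta x) + S e^{-\theta x}$, with $B_k := p_k A_k(1+\theta z_k)e^{-\theta z_k}/\theta$ and $S := \sum_{k=1}^n B_k$. The boundary condition $\tilde V(z_j, j) = A_j z_j$ (which is precisely~\eqref{b0}) then reduces, after straightforward algebra, to the scalar equation $\Phi_j(z_j) = S$, where $\Phi_j(z) := (A_j/(2\theta))[(1+\theta z)e^{-\theta z} - (1-\theta z)e^{\theta z}]$; the smooth-fit identity $\tilde V^-(z_j,j) = A_j$ can be checked to follow automatically from the Riesz structure of the candidate.

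The main obstacle, as I see it, is solving the coupled $n$-dimensional system $\{\Phi_j(z_j) = S\}_{j=1}^n$. I would attack it by first noting that $\Phi_j'(z) = A_j\theta z\sinh(\theta z) > 0$ on $(0,\infty)$, $\Phi_j(0) = 0$, and $\Phi_j(z) \to \infty$ as $z \to \infty$, so each $\Phi_j$ is a continuous bijection $[0,\infty) \to [0,\infty)$; substituting $z_j = \Phi_j^{-1}(S)$ into the definition of $S$ reduces the problem to the one-dimensional fixed-point equation $\Psi(S) = S$ with $\Psi(S) := \sum_{k=1}^n p_k A_k(1+\theta\Phi_k^{-1}(S))e^{-\theta\Phi_k^{-1}(S)}/\theta$. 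Since $z\mapsto(1+\theta z)e^{-\theta z}$ is strictly decreasing on $(0,\infty)$, the function $\Psi$ is continuous and strictly decreasing, with $\Psi(0) = \sum_k p_k A_k/\theta > 0$ and $\Psi(S) \to 0$ as $S \to \infty$, so there is a unique $S^* > 0$ solving $\Psi(S^*) = S^*$, producing positive thresholds $z_j^* := \Phi_j^{-1}(S^*)$.

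Finally, to verify $\tilde V \geq g$ on $O^c$ (condition~(1) of Theorem~\ref{general}): each $\tilde V(\cdot, j)$ restricted to $[0, z_j^*]$ is a positive solution of $\tfrac12 u'' = ru$, hence strictly convex, and by the automatic smooth fit its tangent line at $z_j^*$ coincides with $x\mapsto A_j x = g(x,j)$, so convexity forces $\tilde V(\cdot,j) \geq g(\cdot,j)$ throughout $[0,z_j^*]$. Theorem~\ref{general} then yields $\Sigma = O$ and $V = \tilde V$, and uniqueness of the threshold structure follows from Theorem~\ref{uniqueness_green} applied with $\mathcal{U}$ taken as the family of closed sets of the form $\bigcup_i \{(x,i): x \geq z_i\}$ with $z_i > 0$.
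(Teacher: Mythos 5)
Your proof is correct, but it follows a genuinely different route from the paper's. You work constructively: you posit a threshold set $O$, integrate the explicit Brownian Green kernel to get the closed form $\tilde V(x,j)=\frac{B_j}{p_j}\sinh(\theta x)+S\e^{-\theta x}$ on the continuation part of leg $j$, reduce the value-matching conditions to the decoupled system $\Phi_j(z_j)=S$ with $S=\sum_k B_k$, and solve it by a monotone fixed-point argument ($\Psi$ strictly decreasing, $\Psi(0)>0$, $\Psi\to 0$); the verification then goes through Theorem \ref{general} using smooth fit plus convexity of the harmonic piece. I checked your algebra: $\Phi_j'(z)=A_j\theta z\sinh(\theta z)>0$, the fixed-point reduction, and the identity $\tilde V^{-}(z_j,j)=A_j$ (which indeed follows from $\Phi_j(z_j)=S$ by direct computation) are all right. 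The paper instead proves the threshold structure \emph{qualitatively}, without constructing the $z^*_i$ inside the proof: it sandwiches $V$ between the value functions for the symmetric rewards $A_*x$ and $A^*x$ (which reduce by symmetry to one-dimensional reflected Brownian motion problems with known threshold solutions), rules out unbounded continuation tails because an $r$-harmonic $V$ would grow exponentially and violate the linear upper bound $V\leq A^*x$, and rules out gaps in $\Sigma$ on a leg by showing $V$ would be convex on the gap and hence lie below the linear reward there; the explicit system \eqref{b0xx} and its uniqueness via Theorem \ref{uniqueness_green} come only afterwards. Your approach buys more: it actually proves existence (not just uniqueness) of a solution to the free-boundary system and yields the value function in closed form, whereas the paper leaves solvability of \eqref{b0xx} implicit and solves it numerically. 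The paper's argument, in exchange, is more robust --- it does not depend on the Green kernel being explicitly integrable against the reward or on the system decoupling into scalar equations, and so it adapts to rewards and underlying diffusions where your fixed-point reduction is unavailable. The only point in your write-up I would tighten is the phrase that smooth fit follows ``automatically from the Riesz structure'': before you know $\tilde V=g$ on $O$, the cleanest justification is the direct computation of $\tilde V'(z_j^-,j)$ from the explicit harmonic form together with the boundary equation, which is what you in effect do.
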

\relax
\begin{proof} 
Let
$$
A_*=\min\{A_1,...,A_n\}\quad {\rm and}\quad  A^*=\max\{A_1,...,A_n\}.
$$
Let $V_*$ ($V^*$) be the value function of OSP with reward $g_*(x,i)=A_*x$ 
($g_*(x,i)=A^*x$), $i=1,2,...,n$. Then, because for all $x$ and $i$
$$
g_*(x,i)\leq g(x,i)\leq g^*(x,i),
$$
it follows 
\begin{equation}\label{bound0}
V_*(x,i)\leq V(x,i)\leq V^*(x,i).
\end{equation}
By symmetry, for all $x$ and $i$ 
\begin{equation}\label{b0x1}
V_*(\x)=\sup_{\tau}\EE_{\x}\left(e^{-r\tau}\,A_*\,\X(\tau)\right)
=A_*\sup_{\eta}\EE_{x}\left(e^{-r\eta}\, X_\eta\right)
\end{equation}
and
\begin{equation}\label{b0x2}
V^*(\x)=\sup_{\tau}\EE_{\x}\left(e^{-r\tau}\,A^*\,\X(\tau)\right)
=A^*\sup_{\eta}\EE_{x}\left(e^{-r\eta}\, X_\eta\right),
\end{equation}
where $X=(X_t)_{t\geq 0}$ denotes a reflecting Brownian motion, $\EE_{x}$ the expectation operator associated with $X$ when started at $x$ and the supremum in case of $X$ is taken over all stopping times $\eta$ in the filtration of $X$. For OSP (\ref{b0x1}) and for (\ref{b0x2}) there exists $z_*$ and $z^*$, respectively, such that the corresponding stopping regions are given by $\Sigma_*=\{ (x,i)\,:\, x\geq z_*, i=1,2,...,n\}$ and $\Sigma^* =\{ (x,i)\,:\, x\geq z^*, i=1,2,...,n\}$. In particular, on every leg $i$ it holds for $x\geq z^*$
\begin{equation}\label{bound}
V(x,i)\leq A^* x.
\end{equation}
Consequently, $\Sigma^c$ does not contain sets of the form $O_{i,z}:=\{(x,i)\,:\, x>z\}$ for any $i$ and $z>0$.  Indeed, if there is $i$ and $z$ such  $O_{i,z}\subset\Sigma^c$ then $V$ is $r$-harmonic on $O_{i,z}$ and increases exponentially, see (\ref{mahar23}), but this violates (\ref{bound}). Next, (\ref{bound0}) also yields that $\Sigma\subseteq \{(x,i)\,:\, x\geq z_*, i=1,2,...,n\}$. Finally, assume that $\Sigma$  is not connected on some leg $k$  Then there exist $x_k\in\Sigma^c$ such that $V(x_k,k)>g(x_k,k)=A_kx_k$, and, by continuity, $x_{k_1}$ and $x_{k_2}$ in $\Sigma$ such that $x_{k_1}<x_k<x_{k_2}$ and $V(x,k)>A_k x$ for all in $x\in(x_{k_1},x_{k_2})$.  Let 
$$
H_{1,2}:=\inf\{t\,:\, \X_t\notin (x_{k_1},x_{k_2})\}.
$$
Since $V$ is $r$-harmonic on $(x_{k_1},x_{k_2})$ it holds
$$
V(x,k)=\EE_{(x,k)}\left(e^{-rH_{1,2}}\,A_k\,\X(H_{1,2})\right)=A_k\EE_{x}\left(e^{-rH_{1,2}}\,X_{H_{1,2}}\right).
$$
Using  \cite{BorodinSalminen} 3.3.0.5 (a) and (b) p. 218 gives 
\begin{align*}
\EE_{x}\left(e^{-rH_{1,2}}\,X_{H_{1,2}}\right)&=x_{k_1}\EE_{x}\left(e^{-rH_{1,2}}\,;\,X_{H_{1,2}}=x_{k_1}\right)+x_{k_2}\EE_{x}\left(e^{-rH_{1,2}}\,;\,X_{H_{1,2}}=x_{k_2}\right)\cr
&=\frac 1 {\sinh\lp(x_{k_2}-x_{k_1})\sqrt{2r}\rp}\lp x_{k_1}\sinh\lp(x_{k_2}-x)\sqrt{2r}\rp
+x_{k_2}\sinh\lp(x-x_{k_1})\sqrt{2r}\rp\rp.
\end{align*}
Consequently, $V$ is convex on $(x_{k_1},x_{k_2})$ and, hence, $V(x,k)<g(x,k)$ for all $x\in(x_{k_1},x_{k_2})$. This is a contradiction, and the proof is complete.
\end{proof}
Let $z^*_i, i=1,2,...,n,$ be the boundary points of $\Sigma$, as introduced in Proposition \ref{cone}. Then $(z^*_1,..., z^*_n)$ constitutes a solution to the system of the equations (\ref{b0x}), i.e.,  for $i=1,2,...,n$ 
 \begin{align}\label{b0xx}
\sum_{k=1}^n \int_{0}^{z^*_k}\g_r((z^*_i,i),(y,k))\, {2rp_k}\, A_k y\, dy=\g_r((z^*_i,i),\0)\sum_{k=1}^n p_k A_k. 
\end{align}
The uniqueness of the solution  follows from Theorem \ref{uniqueness_green}. Indeed, it is easily verifed that letting $\cal U$  consist of  all sets $U$ of the form
 $$
U=U(\alpha_1,...,\alpha_n)=\{(x_i,i)\,:\, x_i\geq \alpha_i, i=1,...,n\}, \quad \alpha_i>0, i=1,2,...,n,
$$
then the properties {\bf (a)}, {\bf(b)}, {\bf (c)}, and  {\bf (d)} listed therein hold.

To give some numerical flavor, let $n=3$ and $r=1/2$. Then the system (\ref{b0xx}) becomes
\begin{align*}
A_i \int_0^{z_i} y\sinh(y)dy + \sum_{k=1}^3 A_k p_k \int_0^{z_k} y\e^{-y} dy = \sum_{k=1}^3 A_k p_k, \quad i=1,2,3,
\end{align*}
and integrating yields
\begin{align}
\label{se}
A_i(z_i \cosh(z_i) - \sinh(z_i))			= \sum_{k=1}^3 A_k p_k \e^{-z_k}(1+z_k), \quad i=1,2,3. 
\end{align}
Putting, moreover, $p_1=p_2=p_3=1/3$, $A_1=1, A_2=2$ and $A_3=3$ and  solving numerically equations (\ref{se}) we obtain the stopping region of the OSP
that has the respective critical threshold values
$$
z_1=1.4816,\quad z_2=1.2041,\quad z_3=1.0628.
$$

\subsection{Example}
\label{ex3}
We conclude by considering in the same setting  as in Example \ref{ex2} the  optimal stopping problem  with the payoff function 
\begin{align*}
g(x,i) = A_i x^2, \ x >0 , \ i = 1,2,3,
\end{align*}
where $A_i>0$. Again, applying  Proposition \ref{fin00} it is seen that the value function of the problem is finite. Straightforward but a bit lengthy calculations show the the  representation (\ref{ma3}) for $g$ holds  (cf. Proposition \ref{rwdmeasure}) when
\begin{align*}
\Delta_\0=-\mathbf{D}g(\mathbf{\0}) = 0
\end{align*}
and for $x>0$, see (\ref{msx}), 
\begin{equation*}
	f(x,i)=- \lp \frac 12 \frac{d^2}{dx^2} (A_i x^2)- rA_i x^2\rp 
= A _i\lp rx^2-1\rp.
\end{equation*}
The corresponding equations obtained from \eqref{b0} then are:
\begin{align}\label{b0xx}
\sum_{k=1}^n \int_{0}^{+\infty}\g_r((z,i),(y,k))\,{\bf 1}_{\Sigma^c}((y,k))\, A_k(ry^2-1)2p_k\, dy= 0. 
\end{align}
For simplicity we take $r=1/2$. 
We use the Green kernel expression \eqref{kernel} and proceed similarly as in Example \ref{ex2}, (in particular, it is seen that the continuation region is connected, and that the solution of the associated equation  system is unique).
Introducing the auxiliary functions
\begin{align*}
F(z)&=\int_0^z\sinh(y)(y^2-1)dy=z^2\cosh(z)-2z\sinh(z)\\
H(z)&=\int_0^z e^{-y}(y^2-1)dy=-e^{-z}z(z+2).
\end{align*}
the equation system to find the boundary points of the continuation region is as follows
$$
A_1 F(z^*_1)=A_2 F(z^*_2)=A_3F(z^*_3)
$$
and
$$
A_1 \big(F(z^*_1)+H(z^*_1)\big)+A_2 \big(F(z^*_2)+H(z^*_2)\big)+A_3\big(F(z^*_3)+H(z^*_3)\big)=0.
$$
To solve numerically, we take $A_1=1$, $A_2=2$ and $A_3=3$ obtaining
$$
z^*_1= 2.16987, \qquad z^*_2=2.06543, \qquad z^*_3=2.02250.
$$

\begin{appendices}
\section{Proof of Proposition \ref{rwdmeasure}} 
We prove first (a). For this, consider for $x\geq 0$ 
\begin{align*}
    g(x,i) &= \int_{0}^{+\infty}\g_r((x,i),(y,i))\,f(y,i)\, p_i m(dy)\\
&\hskip3cm+\sum_{k=1}^n \int_{0}^{+\infty}\g_r((x,i),(y,k))\,f(y,k)\, p_k m(dy)  + 
    \g_r((x,i),\0)\,\Delta_\0\cr
&= \int_{(0,x]}\varphi(x)\tilde \psi(y,i)\,f(y,i)\, p_i m(dy)+\int_{(x,\infty)}\varphi(y)\tilde \psi(x,i)\,f(y,i)\, p_i m(dy) \cr
&\hskip2cm
+\frac 1{c_r}\sum_{k\not=i} \int_{0}^{+\infty}\varphi(x)\varphi(y)\,f(y,k)\, p_k m(dy)  + \varphi(x) \tilde \psi(\0) \,\Delta_\0,
\end{align*}
where the explicit form of the Green kernel is used, in particular, $\tilde \psi(y,i)=p_i^{-1}\psi^\partial(y)+c_r^{-1}\varphi(y)$ and 
$\tilde \psi(\0)=c_r^{-1}$ (since $\psi^\partial(0)=0$ and $\varphi(0)=1)$. Introduce a signed measure on  $\Gamma$ via   
\begin{align*}
\mu(dy,i):=&\, \varphi(y)f(y,i)\,p_i m(dy),\quad  y>0, i=1,2,...,n,\cr
\mu(\0):= &\, \Delta_\0.
\end{align*}
Notice that, by assumption (\ref{ma3011}), $\mu$ is of finite total variation.  Straightforward manipulations (cf. (\ref{fu1}) and (\ref{v1})) yield 
\begin{align}
\label{ma301}
    g(x,i) = \frac 1{p_i}\varphi(x) u(x,i) + \frac 1{c_r}\varphi(x)\mu(\Sigma),
  \end{align}  
where 
\begin{align}
\label{ma302}
    u(x,i) = \int_{(0,x]}\frac{ \psi^\partial(y)}{\varphi(y)}\,\mu(dy,i)+ \psi^\partial (x)\,\mu((x,\infty),i). 
  \end{align} 
Next we calculate the scale derivative of $g$ at $(0,i)$. Since $\varphi^+(0)=-c_r$ and  $u(0,i)=0$  we have from (\ref{ma301}) 
$$
g^+(0,i)= \frac 1{p_i}u^+(0,i) -\mu(\Gamma),
$$
and $g^+(0,i)$ exists if and only if   $u^+(0,i)$  exists. Hence consider
$$
\lim_{\delta\downarrow 0}\frac{u(y,i)-u(0,i)}{S(y)-S(0)}=\lim_{\delta\downarrow 0}\frac{u(\delta,i)}{S(\delta)}.
 $$   
 The scale derivative of the first term on the r.h.s. of (\ref{ma302}) is zero. To show this, introduce for $y>0$ and $i=1,2,...,n$
 $$
 \mu_+(dy,i):=\varphi(y)f(y,i){\bf 1}_{\{f(y,i)\geq 0\}}p_i m(dy)\quad  {\rm { and }}\quad  
 \mu_-(dy,i):=-\varphi(y)f(y,i){\bf 1}_{\{f(y,i)< 0\}}p_i m(dy),
 $$
 and then 
$$
\lim_{\delta\downarrow 0}\Big|\frac 1{S(\delta) }\int_{(0,\delta]}\frac{ \psi^\partial(y)}
{\varphi(y)}\,\mu(dy,i)\Big| \leq 
\lim_{\delta\downarrow 0}\lp\frac 1{S(\delta) }\frac{ \psi^\partial(\delta)}
{\varphi(\delta)}\,\big(\mu_+((0,\delta],i)+\mu_-((0,\delta],i)\big)\rp=0,
$$
since $\mu_+$ and $\mu_-$ are finite measures and $\lim_{\delta\downarrow 0}\psi^\partial(\delta)/S(\delta)= 1$ (see (\ref{norm})). Consequently, 
$$
u^+(0,i)=\lim_{\delta\downarrow 0}\frac 1{S(\delta) }\mu((\delta,\infty),i)=\mu((0,\infty),i),
$$
and, hence,
\begin{align*}
{\bf D}g(\0)&=\sum_{i=1}^n p_i\,g^+(0,i)= \sum_{i=1}^n p_i\Big(\frac 1{p_i}u^+(0,i) -\mu(\Gamma)\Big)=\sum_{i=1}^n\mu((0,\infty),i)  -\mu(\Gamma)\cr
&=-\mu(\0)=-\Delta_0
\end{align*}
proving (a). 

Consider now the claim (b).  To deduce formula (\ref{ms}) the scale derivative of $g(\cdot,i)$ is needed. From  (\ref{ma301}) it is seen then that we should calculate the scale derivative of $u(\cdot,i)$. It holds for $\delta\geq 0$
\begin{align*}
{u(x+\delta,i)-u(x,i)}& = \int_{(x,x+\delta]}\frac{\psi^\partial(y)}{\varphi(y)}\,  \mu(dy,i)+\frac{\psi^\partial(x+\delta)}{\varphi(x+\delta)}\mu((x+\delta,\infty),i) -\frac{\psi^\partial(x)}{\varphi(x)}\mu((x,\infty),i)\cr
&=  \int_{(x,x+\delta]}\frac{\psi^\partial(y)}{\varphi(y)}\,  \mu(dy,i)-\frac{\psi^\partial(x)}{\varphi(x)}\mu((x,x+\delta],i)\cr
&\hskip3cm  +\lp\frac{\psi^\partial(x+\delta)}{\varphi(x+\delta)}- \frac{\psi^\partial(x)}{\varphi(x)}\rp \mu((x+\delta,\infty),i)\cr
&=  \int_{(x,x+\delta]}\lp \frac{\psi^\partial(y)}{\varphi(y)}-\frac{\psi^\partial(x)}{\varphi(x)}\rp \,  \mu(dy,i)\cr
&\hskip3cm  +\lp\frac{\psi^\partial(x+\delta)}{\varphi(x+\delta)}- \frac{\psi^\partial(x)}{\varphi(x)}\rp \mu((x+\delta,\infty),i)\cr
&=:F_1(x,\delta)+F_2(x,\delta).
\end{align*}
Since $x\mapsto \psi^\partial(x)/\varphi(x)$ is non-decreasing and (cf. (\ref{w0}))
$$
\lim_{\delta \downarrow 0} \frac{1}{S(x+\delta)-S(x)}\lp \frac{\psi^\partial(x+\delta)}{\varphi(x+\delta)}- \frac{\psi^\partial(x)}{\varphi(x)}\rp =\frac 1{\varphi^2(x)}
$$
we have 
$$
\lim_{\delta \downarrow 0} \frac{F_1(x,\delta)}{S(x+\delta)-S(x)}=0
$$
and 
$$
\lim_{\delta \downarrow 0} \frac{F_2(x,\delta)}{S(x+\delta)-S(x)}=\frac {\mu((x,\infty),i)}{\varphi^2(x)}.
$$
Straightforward manipulations yield
$$
g^+(x,i)\varphi(x)  - g(x,i)\varphi^+(x)=\frac 1{p_i}\mu((x,\infty),i),
$$
as claimed in (b).

\section{Smooth fit theorem for diffusion spiders} 

To make the paper more self contained we discuss here the smooth fit property for diffusion spiders. In the formulation below the differentiation is taken with respect to an arbitrary continuous and increasing function $F$; the "usual" choices being $F=S$ (the scale function) or $F(x)=x, x>0.$ Notice that we consider the smooth fit "only" on the legs of the spider (and not in the vertex). In fact, in this case, the result is as in the diffusion case presented in Salminen and Ta 
\cite{SalminenTa} (and earlier in \cite{Salminen}), see also Peskir \cite{Peskir07}.

\begin{theorem}
\label{smoothfit1}
Let $z\not= \0$ be a left boundary point of the stopping set $\Sigma ,$ i.e., 
$[z,z+\varepsilon_1)\subset \Sigma\setminus \{\0\}$ and $(z-\varepsilon_2,z)\subset
  \Sigma^{\,c}\setminus \{\0\}$ for some positive $\varepsilon_1$ and  $\varepsilon_2.$
Let $F$ be a continuous and increasing function and assume that the
reward function 
$g$ and the functions $\varphi_\alpha$
and $\tilde\psi_\alpha,\ \alpha\geq 0,$ are  $F$-differentiable at $z.$ Then the value
function $V$ in (\ref{eq:osp})  is $F$-differentiable at $z$ and the smooth fit with respect
to $F$   holds:
\begin{equation}
\label{sfeq}
\frac{d^+V}{dF}(z)= \frac{d^-V}{dF}(z)=\frac{dg}{dF}(z).
\end{equation}
\end{theorem}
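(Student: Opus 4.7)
The plan is to reduce the claim to the classical smooth fit theorem for one-dimensional diffusions, exploiting that $z\neq\0$ lies strictly inside leg $i$: locally around $z$, before $\X$ visits $\0$, its behaviour coincides with that of the underlying diffusion $X^\partial$, so the configuration $[z,z+\varepsilon_1)\subset\Sigma$, $(z-\varepsilon_2,z)\subset C:=\Sigma^c$ is exactly the left-boundary configuration treated in the one-dimensional theory.

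I first dispose of the easy parts. Since $V=g$ on $\Sigma$ and $[z,z+\varepsilon_1)\subset\Sigma$, the right derivative satisfies $\frac{d^+V}{dF}(z)=\frac{dg}{dF}(z)$. From $V\ge g$ and $V(z)=g(z)$ it follows that for all $\delta\in(0,\varepsilon_2)$,
$$
\frac{V(z,i)-V(z-\delta,i)}{F(z)-F(z-\delta)}\le \frac{g(z,i)-g(z-\delta,i)}{F(z)-F(z-\delta)},
$$
so the $\limsup$ of the left-hand side as $\delta\downarrow 0$ is at most $\frac{dg}{dF}(z)$. Moreover, by the strong Markov property at the first exit from $(z-\varepsilon_2,z)$, the restriction $V(\cdot,i)$ solves a Dirichlet problem for the generator of $X^\partial$ on this interval and hence has the form $A\,\varphi+B\,\psi^\partial$ for some constants $A,B$; the assumed $F$-differentiability of $\varphi$ and $\tilde\psi$ then yields the existence of $\frac{d^-V}{dF}(z)$.

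For the matching lower bound I introduce the Dynkin coordinate $\eta(y):=\psi^\partial(y)/\varphi(y)$, which is increasing and $F$-differentiable near $z$, and consider $u(\eta):=V(y,i)/\varphi(y)$ and $\hat g(\eta):=g(y,i)/\varphi(y)$. By Theorem \ref{charc2}, $V(\cdot,i)$ is $r$-excessive for $X^\partial$ on leg $i$, so by the classical Dynkin characterisation of $r$-excessive functions of a one-dimensional diffusion, $u$ is concave as a function of $\eta$. Write $\eta_*:=\eta(z)$. For $\eta<\eta_*$ the function $u$ is affine by the harmonic representation above; for $\eta>\eta_*$ one has $u=\hat g$; and $u(\eta_*)=\hat g(\eta_*)$ with $u\ge \hat g$. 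Concavity gives $u'(\eta_*-)\ge u'(\eta_*+)=\hat g'(\eta_*)$, while the domination gives $u'(\eta_*-)\le \hat g'(\eta_*-)=\hat g'(\eta_*)$, so $u$ is differentiable at $\eta_*$ with $u'(\eta_*)=\hat g'(\eta_*)$. Applying the chain rule to $V(y,i)=\varphi(y)\,u(\eta(y))$ and $g(y,i)=\varphi(y)\,\hat g(\eta(y))$ at $y=z$ then yields $\frac{d^-V}{dF}(z)=\frac{dg}{dF}(z)$, completing \eqref{sfeq}.

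The main obstacle is the concavity step: Dynkin's characterisation of $r$-excessive functions of a linear diffusion is what elevates the local harmonic representation to a one-sided inequality at $\eta_*$, and Theorem \ref{charc2} is exactly the bridge that lets us apply it to $V(\cdot,i)$. An alternative avoiding the explicit coordinate change is to invoke the one-dimensional smooth fit result of Salminen and Ta \cite{SalminenTa} directly on leg $i$, after verifying that $V(\cdot,i)$ is a minimal $r$-excessive majorant of $g(\cdot,i)$ for $X^\partial$ in a neighbourhood of $z$; this reduction is available precisely because $z\neq\0$.
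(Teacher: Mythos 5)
Your proof is correct and follows the same overall strategy as the paper's: the right derivative equals $\frac{dg}{dF}(z)$ because $V=g$ on $[z,z+\varepsilon_1)$, the domination $V\geq g$ with equality at $z$ gives $\frac{d^-V}{dF}(z)\leq\frac{dg}{dF}(z)$, and the reverse inequality comes from the excessivity of $V$. The only substantive difference is how that reverse inequality is justified. The paper simply invokes its formula \eqref{glue2} from Theorem \ref{glue}, which states directly that an $r$-excessive function of $\X$ has a non-positive jump $f^+(x,i)-f^-(x,i)$ at every interior point of a leg (the mass of the representing measure at that point being non-negative); this makes the proof three lines. You instead pass through Theorem \ref{charc2} to view $V(\cdot,i)$ as $r$-excessive for the one-dimensional killed diffusion $X^\partial$ and then use the classical concavity of $V/\varphi$ in the Dynkin coordinate $\psi^\partial/\varphi$, which encodes exactly the same non-positive derivative jump. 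Your route has the merit of making the existence of $\frac{d^-V}{dF}(z)$ explicit via the harmonic representation $A\varphi+B\psi^\partial$ on $(z-\varepsilon_2,z)$, a point the paper leaves implicit; its minor cost is the extra chain-rule step through $\eta=\psi^\partial/\varphi$, which tacitly requires $\frac{d\eta}{dF}(z)\neq 0$ (automatic for $F=S$ by the Wronskian normalization, but worth a word for general $F$). Both arguments rest on the same two facts, so this is a repackaging rather than a new proof.
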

\begin{proof}
Since $V>g$ on $\Sigma^c$ and $V=g$ on $\Sigma$ 
we have 
\begin{align*}
\frac{d^+V}{dF}(z)&=\lim_{\delta\to 0+}
\frac{V(z+\delta)-V(z)}{F(z+\delta)-F(z)}
=\lim_{\delta\to 0+}
\frac{g(z+\delta)-g(z)}{F(z+\delta)-F(z)}
\\
&=\frac{d^+g}{dF}(z)
=\frac{dg}{dF}(z)
\end{align*}
and
\begin{align*}
\frac{d^-V}{dF}(z)&=\lim_{\delta\to 0+}
\frac{V(z-\delta)-V(z)}{F(z-\delta)-F(z)}
=\lim_{\delta\to 0+}
\frac{V(z)-V(z-\delta)}{F(z)-F(z-\delta)}
\\
&\leq
\lim_{\delta\to 0+}
\frac{g(z)-g(z-\delta)}{F(z)-F(z-\delta)}
\\
&=
\frac{d^-g}{dF}(z)
=\frac{dg}{dF}(z).
\end{align*}
Consequently,
$$
\frac{d^-V}{dF}(z)\leq \frac{d^+V}{dF}(z), 
$$
and, hence, (\ref{glue2}) implies
$$
\frac{d^-V}{dF}(z)= \frac{d^+V}{dF}(z),
$$
proving the claim.
\end{proof}
\end{appendices}

\subsubsection*{Acknowledgements} 
This research has been partially supported by grants from Magnus Ehrnrooths stiftelse, Finland. 
The authors acknowledge also the hospitality of \AA bo Akademi University, Turku-Åbo, Finland and Universidad de la República, Montevideo, Uruguay.

\end{document}